\newtheorem{thm}{Theorem}[section]
\newtheorem{prop}[thm]{Proposition}
\newtheorem{lem}[thm]{Lemma}
\newtheorem*{main}{Main Theorem}
\theoremstyle{definition}
\newtheorem{defn}[thm]{Definition}
\newtheorem{rem}[thm]{Remark}
\renewcommand{\bar}[1]{\overline{#1}}
\newcommand{\set}[2]{\{\,{#1} \mid {#2} \,\}}
\renewcommand{\emptyset}{\varnothing}
\newcommand{\field}[1]{\mathbb{#1}}
\newcommand{\PP}{\field{P}}
\newcommand{\HH}{\field{H}}
\newcommand{\EE}{\field{E}}
\DeclareMathOperator{\CAT}{CAT}
\DeclareMathOperator{\diam}{diam}
\DeclareMathOperator{\Sat}{Sat} 
\DeclareMathOperator{\Hull}{Hull}
\begin{document}

\title[Various boundaries of relatively hyperbolic groups]{Relations between various boundaries of relatively hyperbolic groups}

\author{Hung Cong Tran}
\address{Dept.\ of Mathematical Sciences\\
University of Wisconsin--Milwaukee\\
P.O.~Box 413\\
Milwaukee, WI 53201\\
USA}
\email{hctran@uwm.edu}

\date{\today}

\begin{abstract}
Suppose a group $G$ is relatively hyperbolic with respect to a collection $\PP$ of its subgroups and also acts properly, cocompactly on a $\CAT(0)$ (or $\delta$--hyperbolic) space $X$. The relatively hyperbolic structure provides a relative boundary $\partial(G,\PP)$. The $\CAT(0)$ structure provides a different boundary at infinity $\partial X$. In this article, we examine the connection between these two spaces at infinity. In particular, we show that $\partial (G,\PP)$ is $G$--equivariantly homeomorphic to the space obtained from $\partial X$ by identifying the peripheral limit points of the same type.
\end{abstract}

\subjclass[2000]{%
20F67, 
20F65} 
\maketitle

\section{Introduction}

In \cite{MR919829}, Gromov introduced relatively hyperbolic groups. This concept was studied by Bowditch \cite{MR2922380}, Osin \cite{Osin06}, Farb \cite{MR1650094} and others. Many equivalent definitions for this concept were introduced. Gromov defined a relatively hyperbolic group by its action on a proper, hyperbolic space (the cusped space) (see the Definition 1 in \cite{MR2922380}). Bowditch (elaborating on an idea of Farb) viewed a relatively hyperbolic group by its action on a fine, hyperbolic graph (the coned off space) (See Definition \ref{rel}). By investigating the geometry of the cusped spaces and coned off spaces, he introduced the relative boundary of a relatively hyperbolic group. In \cite{MR1680044}, Bowditch also proved that the relative boundary of a relatively hyperbolic group is always locally connected if it is connected (under certain hypotheses on the peripheral subgroups).

Many $\CAT(0)$ or $\delta$--hyperbolic spaces admit proper, cocompact actions of relatively hyperbolic groups. Such spaces were studied by Hruska--Kleiner \cite{MR2175151}, Caprace \cite{MR2665193}, Kapovich--Leeb \cite{MR1339818}, Hindawi \cite{MR2172938} and others. Suppose a group $G$ is relatively hyperbolic with respect to a collection $\PP$ of its subgroups and also acts properly, cocompactly on a $\CAT(0)$ (or $\delta$--hyperbolic) space $X$. What is the connection between the boundary of $X$ and the relative boundary of $G$? The main theorem in this paper answers this question.

\begin{main}
Let $(G,S,\PP)$ be a relatively hyperbolic group that acts properly and cocompactly on a $\CAT(0)$ space or a $\delta$--hyperbolic space $X$. Then the relative boundary $\partial (G,\PP)$ of $G$ is $G$--equivariantly homeomorphic to the space obtained from $\partial X$ by identifying the peripheral limit points of the same type. 
\end{main} 

We remark that the case of $\delta$--hyperbolic spaces was known before. It follows from the work of Gerasimov \cite {MR2989436} and Gerasimov--Potyagailo \cite{GerasimovPotyagailo} on Floyd boundaries, using Gromov's result that the Floyd boundary is the same as the Gromov boundary for word hyperbolic groups (\cite {MR919829}, 7.2.M) (For an alternate proof, see \cite{MatsudaOguniYamagata}). However, the case of $\CAT(0)$ spaces was still open and it is solved by the main theorem of this paper. Moreover, the technique to prove the main theorem in the case of $\CAT(0)$ spaces is different from the technique used in the case of $\delta$--hyperbolic spaces because of the lack of hyperbolicity. However, the technique we use for proving the case of $\CAT(0)$ spaces could be applied to the case of $\delta$--hyperbolic spaces with slight modification. 

The challenge here is to build a $G$--equivariant quotient map $f$ from $\partial X$ to $\partial (G,\PP)$ by investigating the relations between quasigeodesic rays in the Cayley graph and geodesic rays in the coned off Cayley graph. In \cite{Hruska10}, Hruska found relations between quasigeodesic segments in the Cayley graph and geodesic segments in the coned off Cayley graph to study the quasiconvexity of subgroups. However, the relations between quasigeodesic rays in the Cayley graph and geodesic rays in the coned off Cayley graph require us to examine many cases and they are not immediate corollaries of Hruska's result. Moreover, the proof of continuity of $f$ is not quite obvious since the topologies on $\partial X$ and $\partial (G,\PP)$ are not equipped in similar ways.

We now take a look on a particular example to visualize the theorem. Let $G$ be the fundamental group of a complete $3$--dimensional hyperbolic manifold of finite volume. The group $G$ is hyperbolic relative to the collection $\PP$ of its cusp subgroups and $G$ acts on the hyperbolic space $\HH^3$. Moreover, the action satisfies all conditions in Definition 1 of \cite{MR2922380}. Thus, the relative boundary of $(G,\PP$) is obviously $\partial(G,\PP)=\partial \HH^3=S^2$. Ruane \cite{MR2182938} considered a metric space $X$ obtained from the hyperbolic space $\HH^3$ by deleting a family of disjoint open horoballs of the action and endowing $X$ with the induced length metric. The space $X$ was known to be a complete $\CAT(0)$ space and the boundary of $X$ is the Sierpinski carpet. We obtain $S^2$ by identifying each of the circles inside the Sierpinski carpet and the circle of the outside boundary of the Sierpinski carpet. Thus, we can see the connection between the relative boundary of the group $G$ and the $\CAT(0)$ boundary of the space $X$. 

From the main theorem, we could see that we are able to understand the relative boundary of a relatively hyperbolic group if we have enough information about the boundary of the space on which the group acts properly, cocompactly and the peripheral limit points of each peripheral left coset. We take a look at some examples:

Let $G$ be a $\delta$--hyperbolic group and $\PP$ be a finite malnormal collection of quasiconvex subgroups. Thus, $G$ is not only a hyperbolic group but also relatively hyperbolic with respect to the collection $\PP$ (see \cite {MR2922380}). By the main theorem, the relative boundary $\partial (G,\PP)$ is obtained from the boundary of $G$ by identifying the limits points of each peripheral left coset.

In particular, if $G$ is the fundamental group of a hyperbolic surface $M$ and $H$ is the fundamental group of a closed essential curve $C$ of $M$, then the group $G$ is not only a hyperbolic group but also relatively hyperbolic with respect to $H$. We know that the boundary of $G$ is a circle $S^1$ and each peripheral left coset gives us a pair of limit points. By identifying each such pair, we see that the relative boundary is tree-graded over circles (for the definition of tree graded spaces, see \cite{MR2153979}). This example was also discussed by Bowditch in \cite{MR1837220}. He examined these boundaries from a different point of view using the structure of the splitting as an amalgam.

In \cite{MR2175151}, Hruska and Kleiner proved that if a group $G$ is relatively hyperbolic with respect to a collection of virtually abelian subgroups of rank at least two and $G$ acts properly and cocompactly on a $\CAT(0)$ space $X$, then $X$ has isolated flats. Moreover, the set of all peripheral limit points is also the set of all points that lie in the boundaries of these flats. Two peripheral points have the same type iff they lie in the boundary of the same isolated flat. We know that each boundary of an isolated flat is a sphere in the boundary of $X$. Thus, by identifying each such sphere from the boundary of $X$ we obtained the relative boundary of $G$. 

Many relatively hyperbolic groups are also ``CAT(0) groups'' in the sense that they act properly and cocompactly on CAT(0) spaces. For instances, the fundamental groups of compact, irreducible 3-manifolds with at least one hyperbolic JSJ component \cite{MR1339818}, CAT(0) groups with isolated flats \cite{MR2175151}, relatively hyperbolic Coxeter groups \cite{MR2665193}, and fundamental groups of certain real analytic, nonpositively curved 4-manifolds \cite{MR2172938} and others. The main theorem could be applied to such groups in order to understand the connection between the two different kinds of boundaries at infinity of them.

Another advantage of the theorem is that it helps us to see the connection among the boundaries of the cusped space $Y$, the coned off space $K$ and the space $X$ the group acts properly, cocompactly on. (In particular, $X$ carries the intrinsic geometry of the group.) In \cite{MR2922380}, Bowditch showed the connection between the cusped space $Y$ and the coned off space $K$ as the following: \[\partial Y\simeq \Delta_{\infty} K.\] where $\Delta_{\infty}K$ is a topological space that contains $\partial K$, we will define $\Delta_{\infty}K$ later (see Section 3). In particular, there is a natural embedding of $\partial K$ into $\partial Y$. Bowditch used $\partial Y$ or $\Delta_{\infty}K$ to define the relative boundary of a relatively hyperbolic group $G$ and the main theorem deduces that there is a quotient map from $\partial X$ to the $\partial Y$. As a consequence, there is an embedding from $\partial K$ into $\partial X$. Therefore, we have the complete connection among the boundaries of three such kinds of spaces. 

In this paper, we only focus on the case of $\CAT(0)$ spaces. The proof for the case of $\delta$--hyperbolic spaces is nearly identical and we leave it to the reader. 

\subsection*{Acknowledgments}
I would like to thank my advisor Prof.~Christopher Hruska for very helpful comments and suggestions. I also thank the referee for advice that improved the exposition of the paper and Shin-ichi Oguni for helpful correspondence.  

\section{Some properties of $\delta$--hyperbolic spaces and $\CAT(0)$ spaces}
\label{sec:Properties}

In this section, we review the concept of $\CAT(0)$ spaces and $\delta$--hyperbolic spaces and their well-known properties we need to prove the main theorem. Most of information in this section are cited from \cite{MR1377265} and \cite{MR1086648}. 
\begin{defn}
We say that a geodesic triangle $\Delta$ in a geodesic space X satisfies \emph{the $\CAT(0)$ inequality} if $d(x,y)\leq d(\bar{x},\bar{y})$ for all points $x,y$ on the edges of $\Delta$ and the corresponding points $\bar{x},\bar{y}$ on the edges of the comparison triangle $\bar{\Delta}$ in Euclidean space $\EE^2$.
\end{defn}

\begin{defn}
\label{CAT(0)}
A geodesic space X is said to be a \emph{CAT(0) space} if every triangle in X satisfies the $\CAT(0)$ inequality.

If $X$ is a $\CAT(0)$ space, then \emph{the CAT(0) boundary} of $X$, denoted $\partial X$, is defined to be the set of all equivalence classes of geodesic rays in X, where two rays $c,c'$ are equivalent if the Hausdorff distance between them is finite.

On $\partial X$, we could build a topology as follows :

We note that for any $x\in X$ and $\xi \in \partial X$ there is a unique geodesic ray $\alpha_{x,\xi}\colon [0,\infty)\to X$ with $\alpha_{x,\xi}(0)=x$ and $[\alpha_{x,\xi}]=\xi$. We have a topology on $\partial X$ by using the sets $U(x,\xi,R,\epsilon)=\set{\xi'\in\partial X}{d\bigl(\alpha_{x,\xi}(R),\alpha_{x,\xi'}(R)\bigr)\leq \epsilon}$, where $x\in X,\xi\in\partial X, R>0$ and $\epsilon>0$ as a basis.
\end{defn}

\begin{defn}
\label{defn:HyperbolicSpace}
A geodesic metric space $(X,d)$ is \emph{$\delta$--hyperbolic} if every geodesic triangle with vertices in $X$ is \emph{$\delta$--thin} in the sense that each side lies in the $\delta$--neighborhood of the union of other sides.
If $X$ is a $\delta$--hyperbolic space, then we could build \emph{the hyperbolic boundary} of $X$, denoted $\partial X$, in the same way as for a $\CAT(0)$ space. That is, the hyperbolic boundary of $X$ is defined to be the set of all equivalence classes of geodesic rays in X, where two rays $c,c'$ are equivalent if the Hausdorff distance between them is finite. However, the topology on it is slightly different from the topology on the boundary of a $\CAT(0)$ space. (see \cite{MR1744486} for details.)
\end{defn}

\begin{rem}
For each finite path $\alpha$ in a space $X$, we denote the endpoints of $\alpha$ by $\alpha_+$ and $\alpha_-$. For each ray $\alpha$ in a space $X$, we denote the initial point of $\alpha$ by $\alpha_+$.
\end{rem}

\begin{defn}
Let $(X,d)$ be a metric space.
\begin{enumerate}
\item A path $p$ in $X$ is an \emph{$(L,C)$--quasigeodesic} for some $L\geq 1, C\geq 0$, if for every subpath $q$ of $p$ the inequality $\ell(q)\leq Ld(q_+,q_-)+C$ holds.
\item A path $p$ in $X$ is a \emph{quasigeodesic} if it is an $(L,C)$--quasigeodesic for some $L\geq 1, C\geq 0$. 
\item A path $p$ in $X$ is an \emph{L--quasigeodesic} if it is an $(L,L)$--quasigeodesic for some $L\geq 1$. 
\item Two quasigeodeics are \emph{equivalent} if the Hausdorff distance between them is finite.
\end{enumerate}
\end{defn}

\begin{defn}
Let $X$ and $Y$ be two metric spaces. 
\begin{enumerate}
\item A function $\Phi$ from X to Y is a \emph{K--quasi-isometry} for some constant $K\geq 1$ if for all $x,x'\in X$ the inequality \[\frac{1}{K}d_X(x,x')-1\leq d_Y\bigl(\Phi(x),\Phi(x')\bigr)\leq Kd_X(x,x')+K.\] holds and $N_K\bigl(\Phi(X)\bigr)=Y$.
\item A function $\Phi$ from X to Y is a \emph{quasi-isometry} if it is a $K$--quasi-isometry for some $K\geq 1$.
\end{enumerate}
\end{defn}

Here are some well-known properties of $\delta$--hyperbolic spaces and $\CAT(0)$ spaces. 
\begin{lem}
\label{l6}
Let $\alpha$ and $\beta$ be two geodesics in a $\CAT(0)$ space X such that the endpoints of $\beta$ lie in $N_{\epsilon}(\alpha)$ for some positive number $\epsilon$. Then $\beta\subset N_{\epsilon}(\alpha)$. 
\end{lem}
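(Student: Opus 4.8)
The plan is to reduce the statement to the convexity of the $\CAT(0)$ metric. The two geometric inputs I will use are both standard consequences of nonpositive curvature: first, that a geodesic is a convex subset, so the subsegment of $\alpha$ joining any two of its points is again contained in $\alpha$ (this uses uniqueness of geodesics in a $\CAT(0)$ space); and second, that for two geodesics parametrized proportionally to arc length, the distance between corresponding points is a convex function of the parameter (Bridson--Haefliger, cf.\ \cite{MR1744486}).

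First I would set up the configuration. Let $\beta_-$ and $\beta_+$ denote the two endpoints of $\beta$. Since $\beta_-,\beta_+ \in N_{\epsilon}(\alpha)$, for any $\delta>0$ I can choose points $a_-,a_+$ on $\alpha$ with $d(\beta_-,a_-)\le \epsilon+\delta$ and $d(\beta_+,a_+)\le \epsilon+\delta$; working with $\epsilon+\delta$ rather than insisting on a realized nearest point lets me avoid assuming that the projection onto $\alpha$ is attained. Because $\alpha$ is a geodesic and geodesics in a $\CAT(0)$ space are unique, the geodesic segment $\gamma$ joining $a_-$ to $a_+$ is contained in $\alpha$.

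Next I would compare $\beta$ with $\gamma$. Reparametrize both proportionally to arc length over $[0,1]$, with $\beta(0)=\beta_-$, $\beta(1)=\beta_+$, $\gamma(0)=a_-$, $\gamma(1)=a_+$. Convexity of the metric then gives that $t\mapsto d\bigl(\beta(t),\gamma(t)\bigr)$ is dominated by the affine interpolation of its endpoint values, so that
\[
d\bigl(\beta(t),\gamma(t)\bigr)\le (1-t)\,d(\beta_-,a_-)+t\,d(\beta_+,a_+)\le \epsilon+\delta
\]
for every $t\in[0,1]$. Since each $\gamma(t)$ lies on $\alpha$, this yields $d\bigl(\beta(t),\alpha\bigr)\le \epsilon+\delta$ for all $t$, and letting $\delta\to 0$ gives $d\bigl(\beta(t),\alpha\bigr)\le \epsilon$ throughout, i.e.\ $\beta\subset N_{\epsilon}(\alpha)$.

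I do not anticipate a genuine obstacle, since this is one of the elementary warm-up properties of $\CAT(0)$ geometry. The only points that require a little care are invoking the convexity of the metric in the precise form above and the $\delta$--approximation device, which keeps the argument valid even when a closest point of $\alpha$ to an endpoint of $\beta$ is not achieved.
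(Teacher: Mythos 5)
Your argument is correct: the paper states this lemma as a well-known property of $\CAT(0)$ spaces and gives no proof, and your reduction to the convexity of the metric (choosing near-points $a_\pm$ on $\alpha$, noting by uniqueness of geodesics that $[a_-,a_+]\subset\alpha$, and applying $d(\beta(t),\gamma(t))\le(1-t)d(\beta_-,a_-)+t\,d(\beta_+,a_+)$ before letting $\delta\to 0$) is exactly the standard argument the author is implicitly invoking. No gaps.
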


\begin{rem}
\label{l30}
For the case of $\delta$--hyperbolic spaces, we have a similar result:

For each $\epsilon >0$, $\delta >0$ there is $A=A(\epsilon, \delta) >0$ such that the following holds. Let $\alpha$ and $\beta$ be two geodesics in a $\delta$--hyperbolic space X such that the endpoints of $\beta$ lie in $N_{\epsilon}(\alpha)$. Then $\beta\subset N_A(\alpha)$.
\end{rem}

\begin{lem}
\label{l14}
For each choice of positive constants $\delta$ and $\sigma$, there is a positive number $R=R(\delta,\sigma)$ such that the following holds. Let $\alpha$ and $\alpha'$ be two equivalent geodesic rays in a $\delta$--hyperbolic space such that $d(\alpha_+,\alpha'_+)\leq\sigma$ or $\alpha$ and $\alpha'$ be two geodesic segments such that $d(\alpha_+,\alpha'_+)\leq\sigma$ and $d(\alpha_-,\alpha'_-)\leq\sigma$. Then the Hausdorff distance between them is at most $R$.
\end{lem}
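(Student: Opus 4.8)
The plan is to handle the two alternatives in the statement separately and then take $R(\delta,\sigma)$ to be the larger of the two bounds produced.

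For the segment case I would simply invoke Remark~\ref{l30}. Since $d(\alpha_+,\alpha'_+)\leq\sigma$ and $d(\alpha_-,\alpha'_-)\leq\sigma$, both endpoints of $\alpha'$ lie in $N_{\sigma}(\alpha)$, so Remark~\ref{l30} (with $\epsilon=\sigma$) gives $\alpha'\subset N_{A}(\alpha)$ for $A=A(\sigma,\delta)$. Exchanging the roles of $\alpha$ and $\alpha'$ gives $\alpha\subset N_{A}(\alpha')$, whence the Hausdorff distance is at most $A(\sigma,\delta)$.

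The ray case is the one that requires an idea. Parametrize the asymptotic rays by arc length with $d(\alpha(0),\alpha'(0))\leq\sigma$, and let $M$ denote their (finite, but a priori non-uniform) Hausdorff distance. Fixing a parameter $t$, I would bound $d(\alpha(t),\alpha')$ as follows. Choose $T>t+M+2\delta$ and pick $T'$ with $d(\alpha(T),\alpha'(T'))\leq M$; such a $T'$ exists because the Hausdorff distance is $M$. Now consider the geodesic quadrilateral with consecutive vertices $\alpha(0),\alpha(T),\alpha'(T'),\alpha'(0)$, whose four sides are the initial segment of $\alpha$, a geodesic of length $\leq M$ joining $\alpha(T)$ to $\alpha'(T')$, the initial segment of $\alpha'$, and a geodesic of length $\leq\sigma$ joining $\alpha'(0)$ to $\alpha(0)$. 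A geodesic quadrilateral in a $\delta$--hyperbolic space is $2\delta$--thin, so the point $\alpha(t)$, which lies on the first side, is within $2\delta$ of the union of the other three. If it were within $2\delta$ of the rung $[\alpha(T),\alpha'(T')]$, then since every point of that side is within $M$ of $\alpha(T)$ we would get $T-t=d(\alpha(t),\alpha(T))\leq 2\delta+M$, contradicting the choice of $T$. Hence $\alpha(t)$ is within $2\delta$ of the initial segment of $\alpha'$ (giving $d(\alpha(t),\alpha')\leq 2\delta$) or within $2\delta$ of the short side joining $\alpha'(0)$ to $\alpha(0)$ (giving $d(\alpha(t),\alpha')\leq d(\alpha(t),\alpha'(0))\leq 2\delta+\sigma$). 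Either way $d(\alpha(t),\alpha')\leq 2\delta+\sigma$. By symmetry the same bound holds for $d(\alpha'(t),\alpha)$, so the Hausdorff distance is at most $2\delta+\sigma$.

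The main obstacle is precisely the non-uniformity of the a priori Hausdorff distance $M$ between the two rays: it depends on the particular pair of rays, whereas $R$ must depend only on $\delta$ and $\sigma$. The device that resolves this is to let $M$ enter only through the lower bound on the truncation length $T$; because $T$ may be taken arbitrarily large, the rung $[\alpha(T),\alpha'(T')]$ of the quadrilateral is pushed out past $\alpha(t)$ and cannot be responsible for the $2\delta$--fellow-travelling, so $M$ drops out of the final estimate $2\delta+\sigma$. Taking $R(\delta,\sigma)=\max\{A(\sigma,\delta),\,2\delta+\sigma\}$ then covers both cases.
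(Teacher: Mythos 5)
Your proof is correct, and the paper itself offers no proof to compare against: Lemma~\ref{l14} is stated there as a well-known property of $\delta$--hyperbolic spaces, cited from the standard references. Your argument is the standard one -- the segment case via the stability statement of Remark~\ref{l30}, and the ray case via a $2\delta$--thin quadrilateral whose far rung is pushed beyond the point under consideration so that the a priori non-uniform Hausdorff distance $M$ drops out -- and the resulting bound $R=\max\{A(\sigma,\delta),\,2\delta+\sigma\}$ is exactly what is needed.
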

 
\begin{lem}
\label{l1}
Let $X$ be a $\delta$--hyperbolic space. There is a number $M=M(\delta)$ such that the following holds. Let $\alpha$ and $\alpha'$ be two equivalent geodesic rays in $X$ with the same initial point $z$. Let $x$ and $y$ be two points in $\alpha$ and $\alpha'$ respectively such that $d(z,x)=d(z,y)$. Then $d(x,y)\leq M$.
\end{lem}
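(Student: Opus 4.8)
The plan is to build a thin geodesic triangle whose third vertex is pushed far out along the two rays, and to exploit the finiteness of the Hausdorff distance between equivalent rays to control the connecting side. Write $x=\alpha(t)$ and $y=\alpha'(t)$, parametrizing both rays by arc length from $z$ so that $d(z,x)=d(z,y)=t$, and let $H$ denote the (finite, since $\alpha$ and $\alpha'$ are equivalent) Hausdorff distance between $\alpha$ and $\alpha'$.

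First I would record a crude comparison of the two rays at equal parameter. For any $T$, the point $\alpha(T)$ lies within $H$ of some point $\alpha'(s)$; comparing distances to the common basepoint gives $|T-s|=|d(z,\alpha(T))-d(z,\alpha'(s))|\le d(\alpha(T),\alpha'(s))\le H$, and hence $d(\alpha(T),\alpha'(T))\le 2H$. In other words, a geodesic $c=[\alpha(T),\alpha'(T)]$ joining the two far endpoints has length at most $2H$.

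Next, with $t$ fixed, I would choose $T$ so large that $T>t+\delta+2H$, and form the geodesic triangle with vertices $z$, $\alpha(T)$, $\alpha'(T)$, whose sides are the initial segments $\alpha|_{[0,T]}$, $\alpha'|_{[0,T]}$, and the connecting geodesic $c$. By $\delta$--thinness, $x=\alpha(t)$ lies in the $\delta$--neighborhood of $\alpha'|_{[0,T]}\cup c$. The choice of $T$ eliminates $c$: every point of $c$ is within $2H$ of $\alpha(T)$, whereas $d(x,\alpha(T))=T-t>\delta+2H$, so $x$ cannot be within $\delta$ of $c$. Thus $x$ is within $\delta$ of $\alpha'|_{[0,T]}$, say $d(\alpha(t),\alpha'(s))\le\delta$. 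Comparing distances to $z$ once more gives $|t-s|\le\delta$, and then $d(x,y)=d(\alpha(t),\alpha'(t))\le d(\alpha(t),\alpha'(s))+d(\alpha'(s),\alpha'(t))\le 2\delta$, so $M=2\delta$ suffices.

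The main obstacle is precisely the middle step: forcing $x$ to be matched against the $\alpha'$--side of the triangle rather than against the connecting side $c$. This is exactly where the finiteness of the Hausdorff distance $H$ enters, even though $H$ disappears from the final bound; it is what lets $T$ be pushed far enough that $c$ remains a bounded distance from $\alpha(T)$ and therefore cannot absorb the interior point $x$. Everything else is bookkeeping with the triangle inequality and the common basepoint.
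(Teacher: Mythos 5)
Your argument is correct, and the constant $M=2\delta$ it produces is the standard one. The paper itself offers no proof of this lemma --- it is listed among the well-known properties of $\delta$--hyperbolic spaces and cited from the references --- and your argument (pushing the third vertex of a thin triangle far out along both rays, using the finite Hausdorff distance to keep the connecting side short, and then projecting back with the common basepoint) is exactly the standard one.
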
 
 
\section{ Some properties of fine graphs and hyperbolic graphs}

In this section, we review some concepts related to graphs, hyperbolic and fine graphs, the construction of the hyperbolic closure and infinite hyperbolic closure of a hyperbolic and fine graph. Most of the information in this section is cited from \cite{MR2922380}.
\begin{defn} 
Let $K$ be a graph with the vertex set $V(K)$ and the edge set $E(K)$. We write $V_0(K)$ and $V_{\infty}(K)$ respectively for the sets of vertices of finite and infinite degree. A \emph{path} of length $n$ connecting $x,y\in V$ is a sequence, $x_0x_1\cdots x_n$ of vertices, with $x_0=x$ and $x_n=y$, and with each $x_i$ equal to or adjacent to $x_{i+1}$. A \emph{ray} $x_0x_1\cdots x_n\cdots$ is defined similarly. A path $x_0x_1\cdots x_n$ is an $arc$ if the $x_i$ are all distinct. A $cycle$ is a closed path $x_0=x_n$, and a $circuit$ is a cycle with all vertices distinct. We regard two cycles as the same if their vertices are cyclically permuted. We frequently regard arcs and circuits as subgraphs of $K$. We put a $path$ $metric$, $d_K$, on $V(K)$, where $d_K(x,y)$ is the length of the shortest path in $K$ connecting $x$ and $y$.
\end{defn}

\begin{lem} [Proposition 2.1, \cite{MR2922380}] 
\label{l16}
Let K be a graph. The following are equivalent:
\begin{enumerate}
\item For each positive number $n$, each edge of $K$ is contained in only finitely many circuits of length $n$.
\item For each positive number $n$ and $x,y\in V(K)$, the set of arcs of length $n$ connecting $x$ to $y$ is finite.
\end{enumerate}
\end{lem}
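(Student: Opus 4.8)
The plan is to prove the two implications separately, putting almost all of the work into showing that condition (1) implies condition (2). The easy direction, that (2) implies (1), I would dispatch at once: fix an edge $e=uv$ and a length $n$, and note that deleting $e$ from any circuit of length $n$ through $e$ leaves an arc of length $n-1$ from $v$ to $u$. This gives an injection from the set of length-$n$ circuits containing $e$ into the set of length-$(n-1)$ arcs between $u$ and $v$, which is finite by (2); hence only finitely many circuits of length $n$ contain $e$.

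For the converse the first observation is that (1) already yields (2) for \emph{adjacent} pairs: if $uv=e$ is an edge, then an arc of length $m\geq 2$ from $u$ to $v$, joined to $e$, is a circuit of length $m+1$ containing $e$, so (1) bounds the number of such arcs (the cases $m\leq 1$ being trivial). The real task is to bootstrap from adjacent pairs to arbitrary pairs $x,y$, and I would do this by induction on the length $n$, with inductive hypothesis that between every pair of vertices there are only finitely many arcs of length $<n$. The difficulty here is exactly the phenomenon that fineness is designed to control: intermediate vertices may have infinite degree, so one cannot simply sum the inductive bounds over the neighbors of $x$, and the clean circuit-to-arc correspondence above is available only at a single edge, not between two distant vertices.

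To carry out the inductive step I would argue by contradiction, assuming infinitely many arcs $\delta_1,\delta_2,\dots$ of length $n$ from $x$ to $y$. Fix one of them, $\delta_0$, and study how each $\delta_k$ meets the \emph{finite} vertex set $V(\delta_0)$. After discarding the finitely many $\delta_k$ all of whose vertices lie on $\delta_0$, each remaining arc has a well-defined first \emph{excursion}: a subpath that leaves $\delta_0$ and returns, with endpoints $a_k,b_k\in V(\delta_0)$ and interior disjoint from $\delta_0$. Since $V(\delta_0)$ is finite, a pigeonhole step fixes the endpoint pair $(a_k,b_k)=(u_p,u_q)$ for infinitely many $k$. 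Now there are two cases. If infinitely many of these excursions are distinct, then gluing each to the sub-arc of $\delta_0$ between $u_p$ and $u_q$ produces infinitely many distinct circuits of length at most $2n$, all containing the fixed edge of $\delta_0$ incident to $u_p$; as there are only finitely many possible lengths, one edge then lies in infinitely many circuits of a single length, contradicting (1). Otherwise a single excursion is shared by infinitely many $\delta_k$, and since there are only finitely many arcs inside the finite set $V(\delta_0)$ running from $x$ to $a_k$, a second pigeonhole extracts a common initial segment of positive length shared by infinitely many $\delta_k$; their distinct tails are then infinitely many arcs of length $<n$ between a fixed pair, contradicting the inductive hypothesis.

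The main obstacle is precisely this bootstrap. Everything reduces to the combinatorial bookkeeping that converts an excess of long arcs between non-adjacent vertices into either an excess of bounded-length circuits through one fixed edge (violating (1) directly) or an excess of shorter arcs (violating the induction). The delicate points are setting up the excursion decomposition correctly, making the two pigeonhole steps interlock, and checking that the circuits produced are genuinely distinct and of bounded length; the finiteness of $V(\delta_0)$ is what makes all the pigeonholing possible, so the finite-vertex-set structure of a single fixed arc is the key device I would lean on throughout.
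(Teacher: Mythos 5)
The paper does not actually prove this lemma: it is quoted verbatim from Bowditch (Proposition 2.1 of the cited paper), so there is no in-house argument to compare against. Judged on its own, your proof is correct and self-contained. The direction (2)$\implies$(1) is fine (pedantically, a circuit of length $n$ through $e=uv$ yields an arc of length $n-1$ from $u$ to $v$ or from $v$ to $u$, so the count is bounded by a sum of two finite quantities, but that is cosmetic). For (1)$\implies$(2) the induction with the first-excursion decomposition goes through: the portion of each $\delta_k$ preceding its first excursion lies entirely in the finite set $V(\delta_0)$, which is exactly what legitimizes both pigeonhole steps. In your first case the glued closed paths really are circuits, since the excursion interiors avoid $V(\delta_0)$ and so no vertex repeats; they are pairwise distinct because $u_p$ occurs exactly once in each circuit and therefore pins down the excursion up to cyclic permutation; their lengths lie in the finite range $3,\dots,2n$; and they all contain the fixed edge of $\delta_0$ at $u_p$ in the direction of $u_q$ --- so (1) is indeed violated. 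In your second case the shared prefix (initial segment plus the common excursion) has length at least $2$, so the tails are pairwise distinct arcs of one common length $<n$ from the fixed vertex $u_q$ to $y$, contradicting the inductive hypothesis; the degenerate situation where the tails have length $0$ forces the arcs to coincide, which is also a contradiction. The only presentational gaps are the base case (arcs of length at most $1$ between two vertices are unique as vertex sequences, so there is nothing to prove) and a word confirming that the tails and prefixes are themselves arcs (immediate, as subpaths of arcs); neither is a real issue. This is in the spirit of the standard proof of Bowditch's proposition, and what your write-up correctly identifies as the crux --- that one cannot sum over neighbours of a possibly infinite-degree vertex, and must instead localize everything to the finite vertex set of one fixed arc --- is exactly the right difficulty to isolate.
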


\begin{defn}
 We say that a graph is \emph{fine} if it satisfies the properties in Lemma \ref{l16}.
\end{defn} 
 
From now, we assume that $K$ is connected, fine and hyperbolic. Let $\partial K$ be the usual hyperbolic boundary of $K$ and we assume that $\partial K$ has more than two points. We define $\Delta K= V(K)\cup\partial K$, we call \emph{the hyperbolic closure of K} and $\Delta_{\infty} K= V_{\infty}(K)\cup\partial K$, we call \emph{the infinite hyperbolic closure of K}. We put a topology on $\Delta(K)$ as follows:

For each $a\in\Delta K$ and $A$ a finite set of $V(K)$ that does not contain $a$, we define $M(a,A)$ to be the set of points $b\in\Delta K$ such that there is at least one geodesic $\alpha$ from $a$ to $b$ that does not meet $A$.
We define a set $U\subset \Delta K$ to be open if for all $a\in U$, there is a finite subset $A\subset V(K)$ that does not contain $a$ such that $M(a,A)\subset U$.

We observe that the set $V_0$ of all finite degree vertices of K is also the set of all isolated points in $\Delta K$. Thus, we could view $\Delta_{\infty} K$ as $\Delta K$ minus the set of all isolated points.

\begin{defn}
For each $\lambda\geq 1, c\geq 0$, $a\in\Delta K$ and a finite set $A$ of $V(K)$ that does not contain $a$, we define $M_{(\lambda,c)}(a,A)$ to be the set of points $b\in\Delta K$ such that there is at least one $(\lambda,c)$--quasigeodesic arc $\alpha$ from $a$ to $b$ that does not meet $A$.
\end{defn}

\begin{lem}[Bowditch, \cite{MR2922380}] Let $K$ be a fine and hyperbolic graph. Then:
\begin{enumerate}
\item For each $\lambda\geq 1,c\geq 0$, the collection of all sets of the form $M_{(\lambda,c)}(a,A)$, where $a\in\Delta K$ and $A$ is a finite set of $V(K)$ that does not contain $a$, forms the basis for the topology on $\Delta K$ as defined above.
\item $\Delta K$ is Hausdorff.
\item The subspace topology on $\partial K$ induced from $\Delta K$ agrees with the usual topology on $\partial K$.
\item $\Delta K$ and $\Delta_{\infty}K$ are compact. 
\end{enumerate}
\end{lem}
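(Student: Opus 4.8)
The plan is to run everything off two facts about the fine hyperbolic graph $K$. The first is the Morse lemma for the $\delta$--hyperbolic space $K$: for each $\lambda\ge 1,c\ge 0$ there is a constant $H=H(\lambda,c,\delta)$ so that any $(\lambda,c)$--quasigeodesic arc and any geodesic sharing the same pair of endpoints lie at Hausdorff distance at most $H$ (this is the non--proper analogue of Lemma~\ref{l6} and Remark~\ref{l30}). The second is the following consequence of fineness: between any two vertices $u,w$ there are only finitely many $(\lambda,c)$--quasigeodesic arcs. Indeed, such an arc $q$ satisfies $\ell(q)\le \lambda\, d(u,w)+c$ when $q$ is taken as a subpath of itself, so by Lemma~\ref{l16} only finitely many arcs of length at most $\lambda\, d(u,w)+c$ join $u$ to $w$. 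In particular the union of all $(\lambda,c)$--quasigeodesics (and of all geodesics) between two fixed vertices is a finite subgraph.

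For part (1), the easy inclusion is $M(a,A)\subseteq M_{(\lambda,c)}(a,A)$, since a geodesic is a $(\lambda,c)$--quasigeodesic; this already shows each defining neighbourhood embeds in a quasigeodesic set. The substantive direction is, for each finite $A$, to produce a finite $A'$ with $M_{(\lambda,c)}(a,A')\subseteq M(a,A)$, together with openness of each $M_{(\lambda,c)}(a,A)$. The mechanism is the Morse lemma: if $b\in M_{(\lambda,c)}(a,A')$ via a quasigeodesic $\gamma$ missing $A'$, a geodesic $\beta$ from $a$ to $b$ lies in $\nbd{\gamma}{H}$, so $\beta$ can meet $A$ only if $\gamma$ enters $\nbd{A}{H}$; hence it suffices to choose $A'$ so that every $(\lambda,c)$--quasigeodesic issuing from $a$ which enters $\nbd{A}{H}$ is forced to cross $A'$. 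Turning this into a \emph{finite} set $A'$ is the delicate point and is where fineness enters: although $\nbd{A}{H}$ is typically infinite (because $K$ is not locally finite), the finiteness of the families of quasigeodesics joining given vertices confines the relevant crossing vertices to a finite set. Openness of $M_{(\lambda,c)}(a,A)$ is handled in the same spirit, by fellow--travelling a quasigeodesic through an interior point $b$ and re--blocking with a finite set adapted to $b$.

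For part (2), given distinct $a,b\in\Delta K$ I would produce a finite $A\subseteq V(K)$, avoiding both, so that no point of $\Delta K$ is joined to \emph{both} $a$ and $b$ by $(\lambda,c)$--quasigeodesics missing $A$; then $M_{(\lambda,c)}(a,A)$ and $M_{(\lambda,c)}(b,A)$ are disjoint by construction. To build $A$, fix a geodesic between $a$ and $b$, a vertex $m$ well inside it, and let $A$ be the finitely many (by fineness) vertices near $m$ lying on geodesics between $a$ and $b$; hyperbolicity forces any quasigeodesic running from $a$'s side to $b$'s side to cross this bottleneck. For part (3), I would compare the subspace topology on $\partial K$ with the Gromov--product topology directly (one argues this way since $K$ need not be proper): a boundary point $\eta$ lies in $M(a,A)\cap\partial K$ iff some geodesic ray from $a$ to $\eta$ avoids $A$, and via the Morse lemma one checks that, as $A$ ranges over finite sets blocking larger and larger cross sections along a ray to $\xi$, these traces are cofinal with the standard neighbourhoods $\set{\eta}{(\eta\mid\xi)_a\ge n}$; fineness again keeps the blocking sets finite.

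For part (4), compactness of $\Delta K$ is a limiting argument: fix a basepoint $o\in V(K)$, join $o$ to each term of a net by a geodesic, and extract a convergent subnet using the eventual--membership characterisation of convergence in this topology (for every finite $A$ not containing the limit, the tail lies in the corresponding $M(\cdot,A)$). The point to watch is that infinite--degree vertices obstruct a naive K\"onig/local--finiteness selection, so the limit is assembled through the finite families of quasigeodesics avoiding successively larger finite sets rather than by stabilising initial vertices. Then $\Delta_\infty K$ is compact for free, since $V_0(K)$ is exactly the set of isolated points, hence open, so $\Delta_\infty K=\Delta K\setminus V_0(K)$ is closed in the compact space $\Delta K$. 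The recurring main obstacle, common to (1)--(3), is the passage from the \emph{fattened} blocking neighbourhoods $\nbd{A}{H}$ that the Morse constant imposes to genuinely \emph{finite} blocking sets; this step fails for general hyperbolic graphs and succeeds here precisely because $K$ is fine (Lemma~\ref{l16}).
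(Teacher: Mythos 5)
The paper does not prove this lemma at all: it is quoted verbatim from Bowditch \cite{MR2922380}, so there is nothing in the source to compare line by line. Your outline does follow the strategy of Bowditch's own Section on boundaries of fine hyperbolic graphs --- stability of quasigeodesics plus the observation that fineness bounds the number of arcs of given length between two vertices --- so the architecture is the right one. The compactness argument for $\Delta_\infty K$ via ``$V_0(K)$ is exactly the set of isolated points, hence open'' is correct and is the same observation the paper itself records.

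However, as a proof the proposal has a genuine gap precisely at the step you yourself flag as delicate, and flagging it is not the same as closing it. In part (1) you reduce to: choose a finite $A'$ so that every $(\lambda,c)$--quasigeodesic arc issuing from $a$ which enters $N_H(A)$ must cross $A'$. First, this reduction is too crude as stated: if $d(a,A)\leq H$ (which is not excluded), then \emph{every} arc from $a$ enters $N_H(A)$ at its initial vertex, and no finite $A'$ avoiding $a$ can block them all; one must instead block only deep penetrations of $N_H(A)$ and treat geodesics meeting $A$ near $a$ separately. Second, the assertion that ``the finiteness of the families of quasigeodesics joining given vertices confines the relevant crossing vertices to a finite set'' is exactly the content to be proved, and it does not follow directly from Lemma~\ref{l16}: fineness bounds the number of arcs of bounded length between two \emph{fixed} vertices, whereas here the quasigeodesic passes within $H$ of $A$ at a vertex $w$ ranging over the infinite set $N_H(A)$, and one must first splice in a short connector to a vertex of $A$, extract an embedded arc, and check that the resulting finite vertex set actually meets the original arc at a vertex other than $a$. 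The same issue recurs in part (2), where ``the finitely many vertices near $m$ lying on geodesics between $a$ and $b$'' is not an instance of Lemma~\ref{l16} when $a$ or $b$ is a boundary point; Bowditch needs a separate argument (local finiteness of the union of geodesics between two points of $\Delta K$) to make that set finite. Until these conversions from fattened neighbourhoods to finite blocking sets are actually carried out, parts (1)--(3) remain a plan rather than a proof; for the purposes of this paper the honest course is to do what the author does and cite Bowditch.
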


\section{ Some properties of the Cayley graph and the coned off Cayley graph of a relatively hyperbolic group}

The aim of this section is to explain the concept of relatively hyperbolic groups, the concept of the relative boundary of a relatively hyperbolic group, some properties of the Cayley graph, the coned off Cayley graph of a relatively hyperbolic group and a connection between these two graphs.

\begin{defn}
Given a finitely generated group $G$ with the Cayley graph $\Gamma(G,S)$ equipped with the path metric and a collection $\PP$ of subgroups of G, one can construct the \emph{coned off Cayley graph} $\hat{\Gamma}(G,S,\PP)$ as follows: For each left coset $gP$ where $P\in \PP$, add a vertex $v_{gP}$, we call \emph{peripheral vertex}, to the Cayley graph $\Gamma(G,S)$ and for each element $x$ of $gP$, add an edge $e(x,gP)$, we call \emph{peripheral half edge}, of length 1/2 from $x$ to the vertex $v_{gP}$. This results in a metric space that may not be proper (i.e. closed balls need not be compact).
\end{defn}

\begin{defn} [Relatively hyperbolic group]
\label{rel}
A finitely generated group $G$ is \emph{hyperbolic relative to a collection $\PP$ of subgroups of $G$} if the coned off Cayley graph is $\delta$--hyperbolic and fine.

Each group $P\in \PP$ is a \emph{peripheral subgroup} and its left cosets are \emph{peripheral left cosets} and we denote the collection of all peripheral left cosets by $\Pi$. 
\end{defn}

\begin{defn}[Relative boundary]
Suppose $(G,\PP)$ is relatively hyperbolic, and $S$ is a finite generating
set for $G$. The \emph{relative boundary} $\partial(G,\PP)$ is the space
$\Delta_{\infty} \bigl( \hat\Gamma (G,S,\PP) \bigr)$.
\end{defn}

\begin{rem}
Bowditch has shown that the relative boundary does not depend on the choice of finite generating set. Indeed, if $S$ and $T$ are finite generating sets for $G$ then the spaces $\Delta_{\infty} \bigl( \hat\Gamma (G,S,\PP) \bigr)$ and $\Delta_{\infty} \bigl( \hat\Gamma (G,T,\PP) \bigr)$ are $G$--equivariantly homeomorphic (see \cite{MR2922380}).

The subset $V_{\infty} \bigl( \hat\Gamma (G,S,\PP) \bigr)$ of $\Delta_{\infty} \bigl( \hat\Gamma (G,S,\PP) \bigr)$ consists of the peripheral vertices whose associated peripheral left cosets are infinite which we call the parabolic points. 
\end{rem}

\begin{rem}
In $\hat{\Gamma}(G,S,\PP)$, we call an edge labelled by an element in the set $S$ of generators a \emph{S--edge} and we call an edge that consists of two peripheral half edges with some peripheral vertex $v_{gP}$ in the middle a \emph{peripheral edge}.

From now, we denote the metric in the $\Gamma(G,S)$ by $d_S$ and the metric in $\hat{\Gamma}(G,S,\PP)$ by $d$.
\end{rem}

\begin{lem}
[Osin, \cite{Osin06}]If $G$ is a finitely generated group which is hyperbolic relative to the collection $\PP$ of subgroups of $G$, then $\PP$ is finite. 
\end{lem}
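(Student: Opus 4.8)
The plan is to argue by contradiction: assuming $\PP$ contains infinitely many (necessarily nontrivial) subgroups, I would extract from the combinatorics of the coned off Cayley graph $\hat\Gamma(G,S,\PP)$ a violation of fineness, i.e. of Lemma \ref{l16}. The starting observation is that the identity vertex $e$ is joined to the peripheral vertex $v_P$ by a peripheral half edge for \emph{every} $P\in\PP$, since $e\in P$; more generally $v_P$ is adjacent exactly to the group elements lying in its coset. Thus an infinite $\PP$ produces infinitely many peripheral vertices all adjacent to $e$, and the whole difficulty is to convert this local abundance into infinitely many short arcs between a \emph{fixed} pair of vertices.

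First I would establish a local finiteness statement. Fix a group element $g\neq e$. A peripheral vertex $v_P$ is adjacent to both $e$ and $g$ precisely when $g\in P$, and each such $P$ yields a distinct arc $e\,v_P\,g$ of combinatorial length $2$ joining $e$ to $g$. By Lemma \ref{l16} there are only finitely many arcs of length $2$ from $e$ to $g$, so only finitely many $P\in\PP$ can contain the element $g$. Because $S$ is finite, each metric ball $B_S(e,k)$ in the Cayley graph contains only finitely many elements; summing the previous bound over the nontrivial elements of $B_S(e,k)$ shows that only finitely many $P\in\PP$ meet $B_S(e,k)\setminus\{e\}$. In particular, if $\PP$ is infinite then the shortest nontrivial element of $P$, measured in $d_S$, must have length tending to infinity along the members of $\PP$.

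The main obstacle is the remaining step: ruling out peripheral subgroups whose shortest nontrivial elements escape to infinity. Fineness and finite generation alone do not forbid this (they are consistent with infinitely many ``almost disjoint'' subgroups spread farther and farther from $e$), so here the $\delta$--hyperbolicity of $\hat\Gamma$ must enter. The idea I would pursue is to exploit the fact that a $\hat\Gamma$--geodesic between two elements of a common peripheral coset penetrates the corresponding cone (a bounded coset penetration phenomenon), which together with $\delta$--thinness of triangles and fineness should relocate a long shortest element of $P$ to a uniformly bounded configuration near $e$ --- equivalently, one passes to Osin's finite relative presentation, whose finiteness of relators supplies the uniform bound. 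Either way, one obtains a constant $k_0$ with every nontrivial $P\in\PP$ meeting $B_S(e,k_0)\setminus\{e\}$; combined with the local finiteness of the previous paragraph this forces $\PP$ to be finite, the desired contradiction. I expect this uniform ``no escape to infinity'' bound to be the genuinely hard point, since it is exactly where the isoperimetric content of relative hyperbolicity, rather than the soft counting of arcs, is indispensable.
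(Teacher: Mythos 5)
The paper offers no proof of this lemma; it simply cites Osin \cite{Osin06}, where the statement is established inside the framework of finite relative presentations and relative isoperimetric functions. So the only question is whether your argument stands on its own, and it does not: the decisive step is missing.

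Your first reduction is correct and clean. For a fixed nontrivial $g$, each $P\in\PP$ containing $g$ yields a distinct arc $e\,v_P\,g$ of the same length from $e$ to $g$, so fineness (condition (2) of Lemma~\ref{l16}) bounds the number of such $P$; summing over the finite ball $B_S(e,k)$ shows that only finitely many members of $\PP$ meet $B_S(e,k)\setminus\{e\}$. But the remaining step --- producing a uniform $k_0$ such that every nontrivial $P\in\PP$ contains a nontrivial element of $S$--length at most $k_0$ --- carries essentially the entire content of the lemma, and you do not prove it. The two tools you gesture at do not fill the gap as stated. Bounded coset penetration is itself a consequence of relative hyperbolicity that is normally formulated and proved for a fixed finite peripheral collection, so applying it uniformly across an allegedly infinite $\PP$ is precisely what needs justification. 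Appealing to ``Osin's finite relative presentation'' is closer to circular: the existence of such a presentation is Osin's definition of relative hyperbolicity, its equivalence with the fine-hyperbolic-graph definition used here (Definition~\ref{rel}) is a nontrivial theorem, and in Osin's development the finiteness of the index set is proved together with, and is needed for, that passage between definitions. Until you exhibit a concrete mechanism by which $\delta$--hyperbolicity and fineness of $\hat\Gamma(G,S,\PP)$ force a short nontrivial element into every peripheral subgroup (and also dispose of the degenerate possibility of trivial members of $\PP$, whose cone vertices have degree one and are invisible to your arc-counting), the proof is incomplete at exactly the point you yourself flag as the hard one.
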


\begin{defn}\label{d1}
Let ($X,d)$ be a geodesic metric space. Let $\bigl(Y(p)\bigr)_{p\in P}$ be a collection of subsets of $X$, indexed by a set $P$. We say that:
\begin{enumerate}
\item $\bigl(Y(p)\bigr)_{p\in P}$ is \emph{quasidense} if $X=N_t\bigl(\bigcup_{p\in P}Y(p)\bigr)$ for some $t\geq 0$.
\item $\bigl(Y(p)\bigr)_{p\in P}$ is \emph{locally finite} if $\set{p\in P}{d\bigl(x,Y(p)\bigr)\leq u}$ is finite for all $x\in X$ and $u\geq 0$.
\item $\bigl(Y(p)\bigr)_{p\in P}$ has \emph{the bounded penetration property} if, given any $r\geq 0$, there is some $D=D(r)\geq 0$ such that for all distinct $p,q\in P$, the set $N_r\bigl(Y(p)\bigr)\cap N_r\bigl(Y(q)\bigr)$ has diameter at most $D$.
\item $\bigl(Y(p)\bigr)_{p\in P}$ has \emph{the uniform neighborhood quasiconvexity property} if, given any $\lambda \geq 1, C\geq 0, r\geq 0$, there is some $D=D(\lambda,C,r)\geq 0$ such that any $(\lambda,C)$--quasigeodesic segment whose endpoints lie in the $r$--neighborhood of any set $Y(p)$ must lie in the $D$--neighborhood of $Y(p)$.
\end{enumerate}
\end{defn}

The proof of the following lemma is obvious, and we leave it to the reader.
\begin{lem}
The properties in Definition \ref{d1} are invariant under quasi-isometry.
\end{lem}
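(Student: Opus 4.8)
The plan is to fix a $K$--quasi-isometry $\Phi\colon X\to X'$ and to show that each of the four properties transfers from the collection $\bigl(Y(p)\bigr)_{p\in P}$ in $X$ to the image collection $\bigl(\Phi(Y(p))\bigr)_{p\in P}$ in $X'$. Since every quasi-isometry admits a quasi-inverse $\Psi\colon X'\to X$ (itself a quasi-isometry, with $d_X\bigl(\Psi\Phi(x),x\bigr)$ and $d_{X'}\bigl(\Phi\Psi(y),y\bigr)$ uniformly bounded by a constant $\kappa$), applying the same argument to $\Psi$ gives the reverse implication, so the invariance follows from this one-directional statement.

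First I would record two elementary facts about how a quasi-isometry interacts with metric neighborhoods, which do all the work. For any subset $A\subseteq X$ the upper quasi-isometry bound gives $\Phi\bigl(N_r(A)\bigr)\subseteq N_{Kr+K}\bigl(\Phi(A)\bigr)$; conversely, if $y\in N_r\bigl(\Phi(A)\bigr)$, then choosing $a\in A$ with $d_{X'}\bigl(y,\Phi(a)\bigr)\le r$ and applying $\Psi$ shows that $\Psi(y)$ lies in $N_{r'}(A)$ for a constant $r'$ depending only on $r$, $K$ and $\kappa$. These inclusions say that $\Phi$ and $\Psi$ carry neighborhoods to neighborhoods with controlled distortion of the radius.

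With these in hand the first three properties are routine bookkeeping. \emph{Quasidensity} is immediate: given $y\in X'$, coarse surjectivity provides $x$ with $d_{X'}\bigl(y,\Phi(x)\bigr)\le K$, quasidensity of $\bigl(Y(p)\bigr)$ places $x$ within $t$ of some $Y(p)$, and the push-forward fact bounds $d_{X'}\bigl(y,\Phi(Y(p))\bigr)$ by $Kt+2K$. For \emph{local finiteness}, if $d_{X'}\bigl(y,\Phi(Y(p))\bigr)\le u$ then the pull-back fact places $\Psi(y)$ within a fixed distance $r'$ of $Y(p)$, so the index set is contained in the finite set $\set{p\in P}{d_X\bigl(\Psi(y),Y(p)\bigr)\le r'}$ furnished by local finiteness of $\bigl(Y(p)\bigr)$ at $\Psi(y)$. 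For the \emph{bounded penetration property}, take $y_1,y_2\in N_r\bigl(\Phi(Y(p))\bigr)\cap N_r\bigl(\Phi(Y(q))\bigr)$ with $p\neq q$; the pull-back fact puts both $\Psi(y_1),\Psi(y_2)$ in $N_{r'}(Y(p))\cap N_{r'}(Y(q))$, whose diameter is at most $D(r')$, and the lower quasi-isometry bound for $\Psi$ then converts this into a bound on $d_{X'}(y_1,y_2)$ depending only on $r$.

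The main obstacle is the \emph{uniform neighborhood quasiconvexity property}, because it is the only property phrased in terms of quasigeodesics, so one must verify that quasigeodesics are themselves preserved with controlled constants. Given a $(\lambda,C)$--quasigeodesic segment $\gamma$ in $X'$ with endpoints in $N_r\bigl(\Phi(Y(p))\bigr)$, I would replace $\Psi\circ\gamma$ by the path in $X$ obtained by joining the $\Psi$--images of a sufficiently fine sample of points along $\gamma$ by geodesics, which is legitimate since $X$ is geodesic; a standard estimate shows this path is a $(\lambda',C')$--quasigeodesic with constants depending only on $\lambda,C,K,\kappa$. Its endpoints lie in $N_{r'}(Y(p))$ by the pull-back fact, so uniform neighborhood quasiconvexity in $X$ confines it to $N_{D'}(Y(p))$; pushing forward by $\Phi$ and using that $\Phi\Psi$ displaces points by at most $\kappa$ then confines the original $\gamma$ to $N_{D}\bigl(\Phi(Y(p))\bigr)$ for a constant $D=D(\lambda,C,r)$. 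The only step requiring genuine care is checking that the sampled path is honestly a quasigeodesic and tracking its constants; everything else reduces to the defining quasi-isometry inequalities.
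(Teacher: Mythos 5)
Your proof is correct, and it is exactly the standard argument the paper has in mind when it declares the lemma ``obvious'' and omits the proof. You correctly isolate the one point that is not pure bookkeeping --- that a quasi-isometry does not carry paths to paths, so verifying uniform neighborhood quasiconvexity for the image collection requires taming $\Psi\circ\gamma$ into an honest $(\lambda',C')$--quasigeodesic path by sampling and geodesic interpolation --- and your handling of it is sound and consistent with the paper's subpath-length definition of quasigeodesic.
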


\begin{lem} [Dru{\c{t}}u--Sapir, \cite{MR2153979}]
Let $G$ be finitely generated and hyperbolic relative to a collection $\PP$ of subgroups of $G$. Then in $\Gamma(G,S)$, the collection of peripheral left cosets $\set{gP}{P\in\PP, g\in G}$ has the properties in Definition \ref{d1}.
\end{lem}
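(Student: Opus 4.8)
The plan is to verify the four properties of Definition \ref{d1} separately, treating quasidensity and local finiteness as essentially formal and reserving the real work for bounded penetration and uniform neighborhood quasiconvexity, both of which I would extract from the fineness and hyperbolicity of $\hat\Gamma(G,S,\PP)$, using the $G$--action and the finiteness of $\PP$ to normalize the configurations that arise.

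First, quasidensity is immediate: for any fixed $P\in\PP$ every vertex $g\in G$ lies in the coset $gP$, so $\bigcup_{gP\in\Pi} gP$ contains all of $V\bigl(\Gamma(G,S)\bigr)$, and since each point of $\Gamma(G,S)$ lies within distance $1$ of a vertex the collection is $1$--dense. Local finiteness is almost as quick: if $d_S\bigl(x,gP\bigr)\le u$ then some $y\in gP$ satisfies $d_S(x,y)\le u$; as $\Gamma(G,S)$ is locally finite there are only finitely many such vertices $y$, and each forces $gP=yP$, so every coset meeting $N_u(x)$ lies in the finite set $\set{yP}{d_S(x,y)\le u,\ P\in\PP}$ (finite because $\PP$ is finite).

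For bounded penetration I would argue by contradiction using fineness together with the $G$--action. If the property failed for some $r$, there would be distinct cosets $C_n,C_n'$ and points $x_n,y_n\in N_r(C_n)\cap N_r(C_n')$ with $d_S(x_n,y_n)\to\infty$. Translating by the inverse of a vertex nearest $x_n$ moves $x_n$ to within distance $1$ of $e$; since $e$ then lies within $r+1$ of both translated cosets, the local finiteness argument shows that only finitely many cosets can occur, so after passing to a subsequence the two cosets are a fixed pair $C=aP$, $C'=bQ$ with $C\ne C'$, while the translated $y_n$ satisfy $d_S(e,y_n)\to\infty$. Choosing $\alpha_n\in C$ and $\beta_n\in C'$ with $d_S(y_n,\alpha_n),d_S(y_n,\beta_n)\le r$ and a $d_S$--geodesic $\gamma_n$ from $\alpha_n$ to $\beta_n$, the concatenation $v_{C}\cdot\gamma_n\cdot v_{C'}$ (joining $v_C$ to $\alpha_n$ and $\beta_n$ to $v_{C'}$ by half--edges) is an arc of combinatorial length at most $2r+2$ from $v_C$ to $v_{C'}$, and all of its non--peripheral vertices lie within $d_S$--distance $3r$ of $y_n$. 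Passing to a subsequence with $d_S(y_n,y_m)>6r$ for $n\ne m$ makes these arcs pairwise distinct, so by pigeonhole infinitely many share a common combinatorial length $\le 2r+2$, contradicting the fineness of $\hat\Gamma(G,S,\PP)$ (Lemma \ref{l16}(2)). Hence $N_r(C)\cap N_r(C')$ is uniformly bounded.

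The uniform neighborhood quasiconvexity property is the main obstacle, precisely because $\Gamma(G,S)$ itself need not be hyperbolic, so the Morse stability of quasigeodesics is unavailable there. My plan is to push the problem into the hyperbolic graph $\hat\Gamma(G,S,\PP)$. Given a $(\lambda,C)$--quasigeodesic $q$ in $\Gamma(G,S)$ with $q_-,q_+\in N_r(gP)$, I would first produce from $q$ a path $\hat q$ in $\hat\Gamma(G,S,\PP)$ by replacing each maximal subpath of $q$ that travels through a neighborhood of a single coset with a detour through the corresponding peripheral vertex, and show---this is the Hruska--type comparison between quasigeodesics in $\Gamma(G,S)$ and geodesics in $\hat\Gamma(G,S,\PP)$ alluded to in the introduction---that $\hat q$ is an unparametrized quasigeodesic in $\hat\Gamma(G,S,\PP)$ with constants depending only on $\lambda,C,r$. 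Since $q_-,q_+$ lie within $\hat\Gamma$--distance $r+\tfrac{1}{2}$ of $v_{gP}$, hyperbolicity forces $\hat q$ to remain in a uniform neighborhood of $v_{gP}$, and translating this $\hat\Gamma$--control back into $d_S$--control of $q$ relative to $gP$ is then accomplished using the bounded penetration property just established together with fineness, which prevent $q$ from straying deep into any other coset or far from $gP$. The delicate point throughout is this passage between the two metrics $d$ and $d_S$: closeness in $\hat\Gamma(G,S,\PP)$ never implies closeness in $\Gamma(G,S)$ on its own, and it is exactly fineness and bounded penetration that make the conversion possible.
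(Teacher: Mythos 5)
The paper does not prove this lemma at all: it is quoted from Dru{\c{t}}u--Sapir \cite{MR2153979}, where the substantive properties are established via asymptotic cones (the tree-graded structure of cones of relatively hyperbolic groups), a route entirely different from yours. So your argument has to be judged on its own terms. Your treatments of quasidensity and local finiteness are correct and complete, and your bounded-penetration argument --- translating a putative sequence of large intersections back to the identity, using local finiteness to fix the pair of cosets, and then manufacturing infinitely many pairwise-distinct arcs of bounded length from $v_C$ to $v_{C'}$ to contradict fineness --- is a correct and standard deduction of that property from fineness of $\hat\Gamma(G,S,\PP)$.

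The genuine gap is in uniform neighborhood quasiconvexity, which you rightly identify as the main obstacle but do not actually prove. Two steps are left as black boxes, and each is at least as hard as the statement being proved. First, the claim that the ``electrified'' path $\hat q$ is an unparametrized quasigeodesic of $\hat\Gamma(G,S,\PP)$ with constants depending only on $(\lambda,C,r)$ is itself a substantial theorem about relatively hyperbolic groups, not something that follows from the construction of $\hat q$. Second, and more seriously, the final conversion does not work as described: knowing that $\hat q$ stays within bounded $d$--distance of $v_{gP}$ gives no $d_S$--control at all, since a path of $\hat\Gamma$--length $2$ can traverse an entire peripheral coset and hence an arbitrarily large $d_S$--distance. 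The known repairs --- controlling the entry and exit points of $q$ in each coset of the saturation and applying bounded penetration coset by coset, or Dru{\c{t}}u--Sapir's passage to asymptotic cones in which the cosets become convex pieces --- each require a real argument that your closing appeal to ``bounded penetration together with fineness'' does not supply. As written, property (4) of Definition \ref{d1} is asserted rather than proved.
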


\begin{defn}
If $\hat{c}$ is a path in $\hat{\Gamma}(G,S,\PP)$ and $M\geq 0$, the \emph{M--saturation} of $\hat{c}$, denoted $\Sat_M(\hat{c})$, is the union of all peripheral left cosets $gP$ such that there is at least one $G$--vertex of $\hat{c}$ which lies in the $M$--neighborhood of $gP$ with respect to $d_S$. 
\end{defn}

\begin{rem}
The definition of M--saturation in this paper is slightly different than in the paper of Dru{\c{t}}u--Sapir (see \cite{MR2153979}). Dru{\c{t}}u--Sapir considered M--saturation of a path in the ordinary Cayley graph but in this paper, we consider M--saturation of a path in the coned off Cayley graph. The M--saturation concept in this paper will be used to describe the connection between some path in the ordinary Cayley graph and some path in the coned off Cayley graph (see Lemma \ref{l5} and Lemma \ref{l15}). 
\end{rem}

\begin{lem} [Theorem 3.26, \cite{Osin06}]
\label{l2} There is a positive constant $a$ such that the following holds. Let $\Delta=(p,q,r)$ be a geodesic triangle in $\hat{\Gamma}(G,S,\PP)$. Then for each $G$--vertex $v$ on $p$, there is a $G$--vertex $u$ in the union $q\cup r$ such that $d_S(u,v)\leq a$.
\end{lem}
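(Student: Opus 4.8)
The natural starting point is the $\delta$--hyperbolicity of $\hat{\Gamma}(G,S,\PP)$: since the triangle $\Delta=(p,q,r)$ is geodesic, it is $\delta$--thin in the metric $d$, so the given $G$--vertex $v$ on $p$ satisfies $d(v,q\cup r)\leq\delta$. Choosing a point $w\in q\cup r$ realizing (nearly) this distance, and replacing $w$ by an adjacent $G$--vertex on $q\cup r$ when $w$ is a peripheral vertex, one obtains a $G$--vertex $w$ on $q\cup r$ with $d(v,w)\leq\delta+1$. The difficulty is that $d$--closeness is far weaker than $d_S$--closeness: a $d$--geodesic from $v$ to $w$ may traverse peripheral edges, each of which joins two $G$--vertices lying in a common peripheral left coset but possibly arbitrarily far apart in $d_S$. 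Thus thinness in $\hat{\Gamma}(G,S,\PP)$ alone does not bound $d_S(v,w)$, and the entire content of the statement is to promote coned off closeness to word metric closeness.

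To bridge this gap I would exploit the geometry of peripheral cosets in $\Gamma(G,S)$ recorded in Definition \ref{d1}, which by the Dru\c{t}u--Sapir lemma applies to the collection $\set{gP}{P\in\PP,\,g\in G}$. Since the $d$--geodesic from $v$ to $w$ has length at most $\delta+1$, it crosses only a uniformly bounded number of peripheral edges, hence involves only boundedly many peripheral left cosets. The plan is to track $v$ and $w$ through this bounded chain of cosets and $S$--edges and to use the bounded penetration property together with fineness to show that, at each peripheral crossing, the entry and exit $G$--vertices cannot separate $p$ from $q\cup r$ in the word metric. Concretely, one shows that either $v$ is already within a bounded $d_S$--distance of a $G$--vertex of $q\cup r$, or it lies within a bounded $d_S$--distance of one of the finitely many cosets that the short path penetrates, and that coset is in turn approached within a bounded $d_S$--distance by $q\cup r$; in either case one extracts the required $G$--vertex $u$ on $q\cup r$ with $d_S(u,v)$ bounded by a constant $a$ depending only on $\delta$ and the penetration and quasiconvexity constants.

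The main obstacle is precisely this conversion across peripheral edges: a coned off geodesic can ``shortcut'' an entire peripheral coset at unit $d$--cost, so I must rule out the scenario in which two sides of the triangle enter a common coset at word metric far apart points, and this is exactly where the bounded penetration property and the fineness of $\hat{\Gamma}(G,S,\PP)$ are indispensable. An alternative, and arguably cleaner, route --- the one underlying Osin's original argument --- is to replace each peripheral edge of the three sides by a $d_S$--geodesic inside the corresponding coset, obtaining a relative geodesic triangle in $\Gamma(G,S)$ that bounds a van Kampen diagram, and then to invoke the linear relative isoperimetric inequality to bound the word metric distance from $v$ to the other two sides directly. This reduces the statement to the relative thinness of geodesic triangles in an asymptotically tree--graded space in the sense of Dru\c{t}u--Sapir, at the cost of importing that machinery.
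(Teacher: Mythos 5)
The paper offers no proof of this statement: it is imported verbatim as Theorem~3.26 of Osin's memoir \cite{Osin06}, so the only ``proof'' in the paper is the citation. Your attempt must therefore be judged on its own terms, and your primary route has a genuine gap at exactly the hard point. You correctly identify that $\delta$--thinness gives a $G$--vertex $w$ on $q\cup r$ with $d(v,w)\leq\delta+1$, and that the problem is to upgrade this to a $d_S$--bound. But the tools you invoke to do the upgrade --- bounded penetration and fineness --- do not address the critical scenario, which involves only \emph{one} coset: the side $p$ may pass through a peripheral edge of some coset $gP$ at a $G$--vertex $v$, and $q\cup r$ may pass through a peripheral edge of the \emph{same} coset $gP$ at a $G$--vertex $w$, with $d(v,w)\leq 2$ (both are adjacent to $v_{gP}$) but $d_S(v,w)$ a priori enormous. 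Bounded penetration, as stated in Definition~\ref{d1}, constrains intersections of neighborhoods of two \emph{distinct} cosets and says nothing here; fineness is a local combinatorial finiteness condition on arcs of bounded $d$--length and likewise gives no $d_S$--control. Your sentence asserting that ``the entry and exit $G$--vertices cannot separate $p$ from $q\cup r$ in the word metric'' is precisely the content of the theorem, not a consequence of the hypotheses you have assembled; ruling out the one--coset scenario requires a bounded--coset--penetration statement for the \emph{sides of the triangle themselves} (that two geodesics with close endpoints which both penetrate a coset deeply must enter and exit it at uniformly $d_S$--close vertices), and that is itself a nontrivial theorem.

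Your second, ``alternative'' route --- replacing each peripheral edge of the three sides by a $d_S$--geodesic in the corresponding coset and invoking the linear relative isoperimetric inequality via van Kampen diagrams --- is in fact Osin's actual argument and is the correct way to prove the statement; the Dru\c{t}u--Sapir asymptotically tree--graded machinery gives another complete route. But as written you only gesture at it in one sentence and explicitly defer the machinery, so the proposal as a whole does not constitute a proof. Given that the paper itself treats this as a black box, the honest options are either to cite Osin as the paper does, or to commit to and carry out the isoperimetric argument; the hybrid sketch you lead with does not close the gap.
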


\begin{lem}\label{l8}
Let $\pi$ and $\pi'$ be two equivalent geodesic rays in $\hat{\Gamma}(G,S,\PP)$ (i.e. the Hausdorff distance between $\pi$ and $\pi'$ is finite with respect to d) with the same initial point $h_0$. Suppose that $(g_n)$ and $(g'_n)$ are the sequences of all $G$--vertices of $\pi$ and $\pi'$ respectively. Then the Hausdorff distance between $(g_n)$ and $(g'_n)$ is finite with respect to the metric $d_S$.
\end{lem}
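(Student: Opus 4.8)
\emph{Proof proposal.} The plan is to prove the apparently stronger one--sided statement that every $G$--vertex of $\pi$ lies within $d_S$--distance $a$ of some $G$--vertex of $\pi'$, where $a$ is the constant from Lemma \ref{l2}; running the symmetric argument with the roles of $\pi$ and $\pi'$ interchanged then bounds the $d_S$--Hausdorff distance between $(g_n)$ and $(g'_n)$ by $a$, which is finite. Two elementary facts drive everything. First, since $\hat\Gamma(G,S,\PP)$ contains the Cayley graph $\Gamma(G,S)$ as a subgraph and coning off can only decrease distances, we have $d(g,h)\le d_S(g,h)$ for all $G$--vertices $g,h$. Second, parametrizing $\pi$ and $\pi'$ by arclength with $\pi(0)=\pi'(0)=h_0$, Lemma \ref{l1} applied to these two equivalent geodesic rays in the $\delta$--hyperbolic space $\hat\Gamma(G,S,\PP)$ supplies a constant $M=M(\delta)$ with $d\bigl(\pi(t),\pi'(t)\bigr)\le M$ for every $t\ge 0$.

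Now fix a $G$--vertex $g_n=\pi(s)$ of $\pi$, choose any $L>s+a+M$, and form the geodesic triangle in $\hat\Gamma(G,S,\PP)$ with sides $p=\pi|_{[0,L]}$, $q=\pi'|_{[0,L]}$, and a geodesic $r$ from $\pi(L)$ to $\pi'(L)$. By the bound above, $r$ has length at most $M$. The point $g_n$ is a $G$--vertex lying on the side $p$, so Lemma \ref{l2} produces a $G$--vertex $u$ on $q\cup r$ with $d_S(g_n,u)\le a$, and hence also $d(g_n,u)\le a$.

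The crux is to rule out $u\in r$. If $u$ lay on $r$, then since every point of $r$ is within distance $M$ of the endpoint $\pi(L)$, we would get $d\bigl(g_n,\pi(L)\bigr)\le d(g_n,u)+d(u,\pi(L))\le a+M$. But $p$ is a geodesic, so $d\bigl(g_n,\pi(L)\bigr)=L-s>a+M$, a contradiction. Therefore $u$ lies on $q$; that is, $u$ is a $G$--vertex of $\pi'$ at $d_S$--distance at most $a$ from $g_n$. This is exactly the one--sided estimate, and Lemma \ref{l8} follows.

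I expect the only genuine obstacle to be the case analysis of the last paragraph. A priori the $G$--vertex furnished by Lemma \ref{l2} could sit on the short connecting side $r$, and there $d$--closeness gives no control whatsoever on $d_S$--distance to $\pi'$, since two $G$--vertices in a common peripheral left coset are $d$--close but arbitrarily $d_S$--far. The device that defeats this is to push the endpoints $\pi(L),\pi'(L)$ out toward infinity while keeping $r$ uniformly short via Lemma \ref{l1}: this places $r$ far from the fixed vertex $g_n$ in the $d_S$--metric (via $d\le d_S$), forcing the nearby vertex onto $\pi'$ itself.
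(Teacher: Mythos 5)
Your proposal is correct and takes essentially the same approach as the paper: the paper also applies Lemma \ref{l2} to the triangle formed by initial segments of $\pi$ and $\pi'$ together with a short connecting geodesic (short by Lemma \ref{l1}), and rules out the connecting side by the same $d\le d_S$ distance contradiction. The only cosmetic difference is that the paper connects the $m$--th $G$--vertices $g_m,g'_m$ rather than the arclength points $\pi(L),\pi'(L)$.
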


\begin{proof}
Let $\delta$ be the hyperbolicity constant of $\hat{\Gamma}(G,S,\PP)$, and let $M=M(\delta)$ be the constant in Lemma \ref{l1}. Let $a$ be the number in Lemma \ref{l2}. 

For each $g_n\in\pi$, choose a positive integer $m>n+ M(\delta)+a$.
Let $\pi''$ be a geodesic in $\hat{\Gamma}(G,S,\PP)$ that connects $g_m$ and $g'_m$.
By Lemma \ref{l2}, there is a $G$--vertex $u$ in the union $\pi'\cup \pi''$ such that $d_S(u,g_n)\leq a$.

If $u\in \pi''$, then $d(g_n,g_m)\leq d(g_n,u)+d(u,g_m)\leq d_S(g_n,u)+d(u,g_m)\leq a+M(\delta)$. Also, $d(g_n,g_m)=m-n$. Thus, $m-n \leq a+M(\delta)$. This is a contradiction. It implies that $u\in \pi'$. Thus, $(g_n$) lies in the $a$--neighborhood of $(g'_n$) with respect to the metric $d_S$.

Similarly, $(g'_n$) lies in the $a$--neighborhood of $(g_n$) with respect to the metric $d_S$. Therefore, the Hausdorff distance between $(g_n)$ and $(g'_n)$ is finite with respect to the metric $d_S$.
\end{proof}

\begin{lem} [Lemma 8.8, \cite{Hruska10}]
\label{l3} For each $\epsilon >0$ there is $A=A(\epsilon) >0$ such that the following holds. Let c be an $\epsilon$--quasigeodesic in $\Gamma(G,S)$ and $\hat{c}$ be a geodesic in $\hat{\Gamma}(G,S,\PP)$ with the same endpoints in $G$. Then each $G$--vertex of $\hat{c}$ lies in the $A$--neighborhood of some vertex of c with respect to the metric $d_S$.
\end{lem}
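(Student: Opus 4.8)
The plan is to compare the geodesic $\hat{c}$ of $\hat{\Gamma}(G,S,\PP)$ with a quasigeodesic of $\hat{\Gamma}(G,S,\PP)$ manufactured from $c$, to fellow-travel the two inside the hyperbolic graph $\hat{\Gamma}(G,S,\PP)$, and then to upgrade closeness in the coned-off metric $d$ to closeness in the word metric $d_S$ by exploiting the bounded penetration of the peripheral cosets. First I would build from $c$ a path $\bar{c}$ in $\hat{\Gamma}(G,S,\PP)$ that is a genuine quasigeodesic of $\hat{\Gamma}(G,S,\PP)$. The naive image of $c$ under the inclusion $\Gamma(G,S)\hookrightarrow\hat{\Gamma}(G,S,\PP)$ is usually not a quasigeodesic, since $c$ may run deep inside a coset $gP$ where $\hat{\Gamma}(G,S,\PP)$ offers the length-one shortcut through $v_{gP}$; so whenever a long maximal subpath of $c$ has both endpoints within a fixed $d_S$-distance of a common coset $gP$, I would reroute it through the two peripheral half-edges at $v_{gP}$. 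Using the local finiteness, bounded penetration and uniform neighborhood quasiconvexity of the peripheral cosets in $\Gamma(G,S)$ (Definition \ref{d1}, which hold by Dru{\c{t}}u--Sapir \cite{MR2153979}), one checks that $\bar{c}$ is a $(\lambda,\kappa)$-quasigeodesic of $\hat{\Gamma}(G,S,\PP)$ with $\lambda,\kappa$ depending only on $\epsilon$, that each $G$-vertex retained in $\bar{c}$ is a vertex of $c$, and that each peripheral vertex $v_{gP}$ inserted in $\bar{c}$ arises from a coset that $c$ itself enters and leaves at vertices lying $d_S$-close to $v_{gP}$.

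Since $\hat{\Gamma}(G,S,\PP)$ is $\delta$-hyperbolic and $\hat{c}$ is a geodesic sharing its endpoints with the $(\lambda,\kappa)$-quasigeodesic $\bar{c}$, the stability of quasigeodesics in hyperbolic spaces (cf.\ \cite{MR1377265}) supplies a constant $R=R(\delta,\lambda,\kappa)$ with $\hat{c}\subset N_{R}(\bar{c})$ measured in $d$. Consequently every $G$-vertex $v$ of $\hat{c}$ lies within $d$-distance $R$ of some vertex $z$ of $\bar{c}$.

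It remains to convert this coned-off proximity into the required $d_S$-proximity to $c$, and this is the step I expect to be the main obstacle. The difficulty is that $d(v,z)\le R$ alone does not bound $d_S(v,z)$, because a short path of $\hat{\Gamma}(G,S,\PP)$ joining two $G$-vertices may dive through peripheral cones. If $z=v_{gP}$ is one of the inserted peripheral vertices, the construction of $\bar{c}$ already places $v$ within bounded $d_S$-distance of a vertex of $c$ lying in $gP$. If instead $z$ is a $G$-vertex of $\bar{c}$, hence of $c$, I would take a geodesic of $\hat{\Gamma}(G,S,\PP)$ from $v$ to $z$ and feed it, together with a geodesic realizing their short coned-off distance, into Osin's thin-triangle property for $G$-vertices (Lemma \ref{l2}); combined with the bounded penetration property (Definition \ref{d1}(3)), which forces the finitely many cosets crossed by such a bounded path to overlap $c$ only in sets of bounded $d_S$-diameter, this pins $v$ to within a uniform $d_S$-distance of $c$. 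Assembling the two cases produces a single constant $A=A(\epsilon)$, as required; throughout, the genuinely delicate interaction is the passage between the metrics $d$ and $d_S$ across the peripheral cones, which is exactly what the penetration and quasiconvexity hypotheses on the cosets are there to control.
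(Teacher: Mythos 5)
First, note that the paper does not prove this statement at all: it is quoted verbatim as Lemma~8.8 of Hruska \cite{Hruska10}, so there is no in-paper argument to compare yours against; the comparison below is with the standard proof in the cited source.

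Your architecture (cone off the excursions of $c$ to get a path $\bar{c}$ that is a quasigeodesic of $\hat{\Gamma}(G,S,\PP)$, then compare $\bar{c}$ with $\hat{c}$ inside $\hat{\Gamma}$) is the right one, but both halves hide the actual content. The assertion that $\bar{c}$ is a $(\lambda,\kappa)$--quasigeodesic of $\hat{\Gamma}$ with constants depending only on $\epsilon$ is not a routine check from quasidensity, local finiteness, bounded penetration and uniform neighborhood quasiconvexity: to bound $\ell_{\hat\Gamma}(\bar{c}|_{[x,y]})$ above by a multiple of $d(x,y)$ you must show that the ``transient'' length of $c$ between $x$ and $y$ is comparable to the coned-off distance, and the only route to that I know passes through Osin's component machinery (Lemma~3.21 of \cite{Osin06}) or Dru\c{t}u--Sapir's asymptotic-cone results, not through the four listed coarse properties. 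You also need $\bar{c}$ to be \emph{without backtracking} (it must not re-enter a coset it has left), a hypothesis you never mention but which every comparison theorem in this setting requires; it can be arranged using quasiconvexity of the cosets, but it has to be arranged.

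The more serious gap is the final passage from $d$--proximity to $d_S$--proximity. Knowing $d(v,z)\leq R$ for a $G$--vertex $z$ of $c$ gives no control on $d_S(v,z)$: a $\hat{\Gamma}$--geodesic from $v$ to $z$ of length $\leq R$ may contain a single peripheral edge whose endpoints are arbitrarily $d_S$--far apart, and neither Lemma~\ref{l2} nor the bounded penetration property (which bounds overlaps of \emph{distinct} cosets, not the $d_S$--size of one jump, and says nothing about a coset's overlap with the quasigeodesic $c$) rules this out. Your sketch of ``feeding a geodesic from $v$ to $z$ into the thin-triangle property'' does not produce a triangle to which Lemma~\ref{l2} usefully applies, and iterating it leads back to the statement being proved. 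The tool that actually closes this is Theorem~3.23 of \cite{Osin06}: phase ($G$--) vertices of two same-endpoint quasigeodesics without backtracking in the relative Cayley graph lie at uniformly bounded $d_S$--Hausdorff distance. Applied to $\hat{c}$ and $\bar{c}$ it finishes the proof in one step and makes your Morse-lemma detour unnecessary; this is essentially how Hruska's Lemma~8.8 is proved. As written, your proposal quietly assumes the content of that theorem twice.
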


\begin{defn}
Let $c$ be a quasigeodesic in $\Gamma(G,S)$. If $c_0$ is any subset of $c$, the \emph{Hull} of $c_0$ in $c$, denoted $\Hull_c(c_0)$ is the smallest connected subspace of $c$ containing $c_0$.
\end{defn}

\begin{lem} \label{l5}
For each $\epsilon>0$ there are constants $A=A(\epsilon)>0$ and $B=B(\epsilon)>0$ such that the following holds. Let $c$ be an $\epsilon$--quasigeodesic ray in $\Gamma(G,S)$ such that $c$ is not contained in $N_R(gP)$ for any peripheral left coset $gP$ and any positive number $R$. Then there is a geodesic ray $\hat{c}$ in $\hat{\Gamma}(G,S,\PP)$ such that $\hat{c}$ and $c$ have the same initial point, each $G$--vertex of $\hat{c}$ lies in the $A$--neighborhood of $c$ and $c$ lies in the $B$--neighborhood of the $\Sat_0(\hat{c})$ with respect to the metric $d_S$.
\end{lem}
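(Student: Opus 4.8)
The plan is to reduce this ray statement to the segment statement of Lemma~\ref{l3} by a limiting argument. Let $p$ denote the common initial point and let $(g_n)$ be the sequence of $G$--vertices of $c$, so that $d_S(p,g_n)\to\infty$. For each $n$ I would choose a geodesic segment $\hat\sigma_n$ in $\hat\Gamma(G,S,\PP)$ from $p$ to $g_n$. By Lemma~\ref{l3} every $G$--vertex of $\hat\sigma_n$ lies in the $A$--neighborhood of $c$ with respect to $d_S$, where $A=A(\epsilon)$ is the constant of that lemma; this uniform bound is exactly what will yield property~(2) for the limiting ray, and fixes the constant $A$ in the statement.

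Before passing to the limit I would verify that the $g_n$ leave every ball of $\hat\Gamma(G,S,\PP)$, i.e.\ that $d(p,g_n)\to\infty$. This is precisely where the hypothesis on $c$ is used: were some subsequence to remain in a ball $\ball{p}{N}$ of $\hat\Gamma(G,S,\PP)$, then since $\hat\Gamma(G,S,\PP)$ is fine (Lemma~\ref{l16}) those $G$--vertices could accumulate only behind finitely many parabolic points, and the local finiteness and bounded penetration properties of the peripheral cosets (Definition~\ref{d1}) would then confine $c$ to a bounded $d_S$--neighborhood of a single coset $gP$, giving $c\subset N_R(gP)$ and contradicting the hypothesis. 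Granting $d(p,g_n)\to\infty$, I would extract the limiting ray. Every $G$--vertex of $\hat\Gamma(G,S,\PP)$ has finite degree, and although the intermediate cone points have infinite degree, fineness bounds the arcs joining consecutive $G$--vertices; so a diagonal extraction makes the $\hat\sigma_n$ stabilize, arc by arc, on initial segments of unbounded length, producing a geodesic ray $\hat c$ from $p$ (equivalently, the endpoints subconverge in the compact space $\Delta_{\infty}\hat\Gamma(G,S,\PP)$ and $\hat c$ runs to the limiting point).

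With $\hat c$ constructed, property~(2) is immediate: each $G$--vertex of $\hat c$ lies on the stabilized initial arc it shares with all sufficiently long $\hat\sigma_n$ in the subsequence, hence is a $G$--vertex of those $\hat\sigma_n$ and so lies in $N_A(c)$ by Lemma~\ref{l3}. Property~(3), that $c\subset N_B(\Sat_0(\hat c))$, is the reverse estimate and the real work. The geometric picture is that $\hat c$ shortcuts through a cone point $v_{gP}$ exactly where $c$ dives deep into the coset $gP$: the entry and exit $G$--vertices of such a shortcut lie in $gP$, so $gP\subset\Sat_0(\hat c)$ and this coset covers the corresponding excursion of $c$, while along the remaining portions $c$ is tracked directly by the $G$--vertices of $\hat c$. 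To make this quantitative I would first prove the segment version---that an $\epsilon$--quasigeodesic segment lies in a uniform $d_S$--neighborhood of $\Sat_0$ of any coned off geodesic sharing its $G$--endpoints, a companion to Lemma~\ref{l3} provable from the saturation machinery of Dru{\c{t}}u--Sapir together with the thin triangles of Lemma~\ref{l2}---yielding a constant $B=B(\epsilon)$. Applying this to each $\hat\sigma_n$ and using that $\hat c$ shares arbitrarily long initial arcs with the $\hat\sigma_n$, so that the penetrated cosets converge, I would let $n\to\infty$ and conclude $c\subset N_B(\Sat_0(\hat c))$.

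The main obstacle is property~(3). Lemma~\ref{l3} controls only the direction ``$G$--vertices of $\hat c$ are near $c$'', whereas here one needs the complementary control ``$c$ is near the coset excursions of $\hat c$'', and establishing this forces the case analysis---between portions of $c$ running inside a peripheral coset and portions running transverse to the cosets---that the introduction warns is not an immediate corollary of Hruska's work. A secondary, more technical obstacle is the extraction of the limit ray in the non-proper graph $\hat\Gamma(G,S,\PP)$, which must be driven by fineness rather than by any Arzel\`a--Ascoli compactness.
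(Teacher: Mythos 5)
Your overall route is the paper's: approximate $c$ by longer and longer segments, join the initial point to far points of $c$ by coned off geodesics $\hat\sigma_n$, control their $G$--vertices by Lemma~\ref{l3}, extract a limit ray by a diagonal argument resting on fineness, use the hypothesis that $c$ lies in no $N_R(gP)$ to show the limit is a genuine ray, and obtain property (3) by splitting $c$ into short pieces (over $S$--edges of $\hat c$) and coset excursions (over peripheral edges), the latter controlled by uniform neighborhood quasiconvexity. Two of your intermediate claims, however, would not survive as stated, and the device that repairs both is the actual crux of the paper's proof.

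First, the assertion that $G$--vertices trapped in a ball $\ball{p}{N}$ of $\hat\Gamma(G,S,\PP)$ ``accumulate only behind finitely many parabolic points'' is false for $N\geq 2$: from $p$ one may pass through a cone point into an infinite coset, land at any of its infinitely many elements, and from there reach infinitely many further cosets, so a radius-$2$ ball already meets infinitely many peripheral cosets. Second, and more seriously, fineness does not let you stabilize ``initial segments of the $\hat\sigma_n$'' directly: there are already infinitely many geodesic arcs of length $1$ issuing from $p$ (one through $v_{pP}$ to each element of $pP$), and for fixed $k$ the $k$--th $G$--vertex of $\hat\sigma_n$ ranges over an infinite set as $n$ varies, since a bound on $d$ gives no bound on $d_S$. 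Fineness only bounds the number of arcs of given length between two \emph{fixed} vertices, so before diagonalizing you must pin down finitely many candidate endpoints. The paper does this by indexing along $c$ rather than along the geodesics: for each sample point $z_m$ of $c$ it covers $[z_0,z_n]$ by hulls $c_e$ associated to the edges $e$ of $\hat\sigma_n$ and assigns to $z_m$ a vertex $v_{m,n}$ of $\hat\sigma_n$ that is $d_S$--close to $z_m$ (a $G$--vertex in the $S$--edge case, a cone point $v_{gP}$ with $d_S(z_m,gP)\leq A_1$ in the peripheral case); local finiteness of $\Gamma(G,S)$ and of the coset collection then gives finitely many candidates for $v_{m,n}$, a first diagonal argument freezes $v_m$, and only then does fineness freeze the arc from $z_0$ to $v_m$. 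The same $v_m$'s also carry the use of the hypothesis: if $\{v_m\}$ were finite, infinitely many $z_m$ would lie a bounded $d_S$--distance from a single coset, and uniform neighborhood quasiconvexity would force $c\subset N_R(gP)$. Without this intermediate pinning your limit construction, and hence the whole argument, does not close up.
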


\begin{proof}
Let $A=A(\epsilon)$ be the constant from Lemma \ref{l3}. Since $(gP)_{{gP}\in \Pi}$ is a uniform neighborhood quasiconvex collection of subsets of $\Gamma(G,S)$, then there is some $A_1=A_1(A,\epsilon)\geq 0$ such that any $\epsilon$--quasigeodesic segment whose endpoints lie in the $A$--neighborhood of any set $gP$ must lie in the $A_1$--neighborhood of $gP$.

Choose $(z_n)$ in c such that $z_0$ is the initial endpoint of $c$ and $d_S(z_0,z_n)\rightarrow \infty$. For each $n$, choose a geodesic $\hat{c}_n$ in $\hat{\Gamma}(G,S,\PP)$ with the endpoints $z_0$ and $z_n$.
 
For each $m\geq 0$ and $n>m$, each $G$--vertex of $\hat{c}_n$ lies in the $A$--neighborhood of some vertex of [$z_0,z_n$]. For each $G$--vertex $v$ of $\hat{c}_n$, let \[c_v=\Hull_{[z_0,z_n]}\bigl([z_0,z_n]\cap B(v,A)\bigr).\] For each edge $e$ of $\hat{c}_n$ with $G$--endpoints $v$ and $w$, let \[c_e= \Hull_{[z_0,z_n]}(c_v\cup c_w).\] We note that $e$ is either an $S$--edge in $\Gamma(G,S)$ or $e$ is a peripheral edge that consists of two peripheral half edges with some $v_{gP}$ in the middle. We see that $[z_0,z_n]$ is covered by the sets $c_e$ for all edge $e$ of $\hat{c}_n$. Thus, $z_m\in c_e$ for some edge $e$ of $\hat{c}_n$.

If $e$ is an $S$--edge, the length $c_e$ is at most $\epsilon(2A+1)+\epsilon$. It follows that $z_m$ lies within a distance $\epsilon(2A+1)+\epsilon$ of $[z_0,z_n]\cap B(v,A)$, where $v$ is a $G$--vertex incident to $e$. Thus, $z_m$ lies within $\epsilon(2A+1)+\epsilon+A$ of $v$. We choose $v_{m,n}=v$ and we have $d_S(v_{m,n}, z_m) \leq\epsilon(2A+1)+\epsilon+A$.

If $e$ is a peripheral edge, then the midpoint of $e$ is $v_{gP}$ for some peripheral left coset $gP$ and $[z_0,z_n]\cap B(v,A)$ lies in the $A$--neighborhood of $gP$ for each $G$--endpoint $v$ of $e$. Since each point in $c_e$ lies between two such points, $c_e$ lies in the $A_1$--neighborhood of $gP$. In particular, $d_S(z_m,gP)\leq A_1$. We choose $v_{m,n}=v_{gP}$.

In both cases, we see that there are only finitely many possibilities for $v_{m,n}$ when we fix m and let $n>m$ be arbitrary.
By a diagonal sequence argument, we can suppose that $v_{m,n}=v_m$ is independent of $n>m$. In other words, $v_m\in\hat{c}_n$ for all $n>m$.

For each $0\leq m<n$ there is a geodesic $d_{m,n}$ from $z_0$ to $v_m$ such that $d_{m,n}\subset\hat{c}_n$.
Since $\hat{\Gamma}(G,S,\PP)$ is fine, then there are only finitely many possibilities for $d_{m,n}$ when we fix m and let $n>m$ be arbitrary. By a diagonal sequence argument again, we can suppose that $d_{m,n}=d_m$ is independent of $n>m$. In other words, $d_m\subset\hat{c}_n$ for all $n>m$.

We claim that the set $\set{v_m}{m\geq 0}$ is infinite. Suppose that $\set{v_m}{m\geq 0}$ is finite. For each $m\geq 0$, if $v_m$ is a $G$--vertex, then we choose some peripheral left coset $g_mP_m$ containing $v_m$, otherwise we choose $g_mP_m=gP$ where $v_m =v_{gP}$. The set $\set{g_mP_m}{m\geq 0}$ is also finite. Also, \[d_S(z_m,g_mP_m)\leq \max\bigl\{\epsilon(2A+1)+\epsilon+A, A_1\bigr\}.\] Thus, we could find a subsequence $(m_n)$ and $gP\in \set{g_mP_m}{m\geq 0}$ such that $g_{m_n}P_{m_n} =gP$. It follows that \[d_S(z_{m_n},gP)\leq \max\bigl\{\epsilon(2A+1)+\epsilon+A, A_1\bigr\}.\] It implies that $c\subset N_R(gP)$ for some $R$ by the uniform neighborhood quasiconvexity of $gP$. This is a contradiction. Thus, $\set{v_m}{m\geq 0}$ is infinite. Therefore, there is a geodesic ray $\hat{c}$ in $\hat{\Gamma}(G,S,\PP)$ emanating from $z_0$ and $d_m\subset\hat{c}$ for all $m$.

Each $G$--vertex $u$ of $\hat{c}$ lies in some segment $d_m$ and we know that $d_m\subset c_n$ when $n>m$, then $u$ lies in the $A$--neighborhood of $c$. We claim that $c$ lies in the $B$--neighborhood of $\Sat_0(\hat{c})$ for some $B(\epsilon)$ with respect to the metric $d_S$.

For each $G$--vertex $v$ of $\hat{c}$, let \[c_v=\Hull_c\bigl(c\cap B(v,A)\bigr).\] For each edge $e$ of $\hat{c}$ with $G$--endpoints $v$ and $w$, let \[c_e= \Hull_c(c_v\cup c_w).\] We note that $e$ is either an $S$--edge in $\Gamma(G,S)$ or $e$ is a peripheral edge that consists of two peripheral half edges with some $v_{gP}$ in the middle. Again, $c$ is covered by the sets $c_e$ for all edges $e$ of $\hat{c}$. Thus, for each $x\in c$, $x\in c_e$ for some edge $e$ of $\hat{c}$.

If $e$ is an $S$--edge, the length $c_e$ is at most $\epsilon(2A+1)+\epsilon$. It implies that $x$ lies within a distance $\epsilon(2A+1)+\epsilon$ of $c\cap B(v,A)$, where $v$ is a $G$--vertex incident to $e$. Thus, $x$ lies within $\epsilon(2A+1)+\epsilon+A$ of $v$. Therefore, $x$ lies within $\epsilon(2A+1)+\epsilon+A$ of $\Sat_0(\hat{c})$.
 
If $e$ is a peripheral edge, then the midpoint of $e$ is $v_{gP}$ for some $gP$ and $c\cap B(v,A)$ lies in the $A$--neighborhood of $gP$ for each $G$--endpoint $v$ of $e$. Since each point in $c_e$ lies between two such points, $c_e$ lies in the $A_1$--neighborhood of $gP$. In particular, $d_S(x,gP)\leq A_1$. Therefore, $c$ lies in the $B$--neighborhood of the $\Sat_0(\hat{c})$ with respect to $d_S$ for some $B(\epsilon)$.
\end{proof}

\begin{lem} \label{l15}
For each $\epsilon>0$ there are constants $A=A(\epsilon)>0$ and $B=B(\epsilon)>0$ such that the following holds. Let $c$ be an $\epsilon$--quasigeodesic ray in $\Gamma(G,S)$ such that $c$ is contained in $N_R(g^*P^*)$ for some peripheral left coset $g^*P^*$ and some positive number $R$. Let $\hat{c}$ be a geodesic in $\hat{\Gamma}(G,S,\PP)$ that connects the initial point of $c$ and $v_{g^*P^*}$. Then each $G$--vertex of $\hat{c}$ lies in the $A$--neighborhood of $c$, and $c$ lies in the $B$--neighborhood of $\Sat_0(\hat{c})$ with respect to the metric $d_S$. Moreover, if $z^*$ is a point in $c$ such that $d_S(z^*,g^*P^*)\leq A$, then the subray of $c$ that emanates from $z^*$ lies in the $B$--neighborhood of $g^*P^*$ with respect to the metric $d_S$.
\end{lem}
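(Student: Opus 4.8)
The plan is to run the construction from the proof of Lemma \ref{l5} essentially verbatim, but to exploit the containment hypothesis $c\subset N_R(g^*P^*)$ in place of its negation. Fix $A_0=A_0(\epsilon)$ from Lemma \ref{l3} and let $a$ be the constant of Lemma \ref{l2}. Choose points $z_n\in c$ with $d_S(z_0,z_n)\to\infty$ (here $z_0=c_+$) and coned off geodesics $\hat c_n$ from $z_0$ to $z_n$; by Lemma \ref{l3} every $G$--vertex of each $\hat c_n$ lies in $N_{A_0}(c)$. Form the tracking vertices $v_m$ and the subgeodesics $d_m\subset\hat c_n$ exactly as in Lemma \ref{l5}. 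Since $c\subset N_R(g^*P^*)$ we have $d(z_0,z_n)\le 2R+1$ for all $n$, so the $d_m$ have length at most $2R+1$; by fineness there are only finitely many geodesics of bounded length issuing from $z_0$, whence $\set{v_m}{m\ge 0}$ is finite. This is the exact opposite of the dichotomy used in Lemma \ref{l5}, and here it forces the $v_m$ to stabilize: for large $m$ the vertex $z_m$ is too far from $z_0$ in $d_S$ to be covered by an $S$--edge of $\hat c_n$, so it is covered by a peripheral edge, and the bounded penetration property forces the associated coset to be $g^*P^*$ once the $d_S$--jump across that edge exceeds the penetration diameter. Thus $v_m=v_{g^*P^*}$ for large $m$, and the $d_m$ stabilize to a single geodesic $\hat c^{\infty}$ from $z_0$ to $v_{g^*P^*}$, contained in $\hat c_n$ for all large $n$, with every $G$--vertex in $N_{A_0}(c)$. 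Now the given $\hat c$ and $\hat c^{\infty}$ are two geodesics with the same endpoints $z_0$ and $v_{g^*P^*}$, so applying Lemma \ref{l2} to the degenerate geodesic triangle they bound matches each $G$--vertex of $\hat c$ to a $G$--vertex of $\hat c^{\infty}$ within $d_S$--distance $a$; hence every $G$--vertex of $\hat c$ lies in $N_{A_0+a}(c)$. Set $A=A_0+a$; in particular the terminal $G$--vertex $x\in g^*P^*$ of $\hat c$ lies in $N_A(c)$, so $c$ meets $N_A(g^*P^*)$.

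For the ``moreover'' statement I first record a cofinal-entry claim: $c(t)\in N_{A_0}(g^*P^*)$ for arbitrarily large $t$. Indeed, if some tail $c([t_0,\infty))$ avoided $N_{A_0}(g^*P^*)$, then applying the argument above to this tail, a long enough coned off geodesic joining two tail points would, by the pigeonhole principle on its boundedly many edges, contain a peripheral edge whose $d_S$--jump exceeds the penetration diameter; bounded penetration would force that edge into $g^*P^*$ and Lemma \ref{l3} would place its endpoints in $N_{A_0}$ of the tail, contradicting the avoidance. Granting the claim, let $z^*=c(s_0)$ satisfy $d_S(z^*,g^*P^*)\le A$ and let $y=c(s')$ with $s'>s_0$; choose a cofinal entry point $c(s'')$ with $s''>s'$. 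The subsegment $c_{[s_0,s'']}$ is an $\epsilon$--quasigeodesic whose endpoints lie in $N_A(g^*P^*)$, so the uniform neighborhood quasiconvexity property puts it, and in particular $y$, inside $N_{D}(g^*P^*)$ with $D=D(\epsilon,A)$. Letting $s''\to\infty$ shows the whole subray from $z^*$ lies in $N_B(g^*P^*)$, where $B:=D(\epsilon,A)$.

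Finally, $c\subset N_B(\Sat_0(\hat c))$ follows by combining the covering argument of Lemma \ref{l5} with the two facts just proved. Running that covering argument for $c$ against $\hat c$, each point of the initial arc of $c$ tracked by the $G$--vertices of $\hat c$ lies within $\epsilon(2A+1)+\epsilon+A$ of some $G$--vertex (for an $S$--edge), hence of $\Sat_0(\hat c)$, or within $A_1$ of the coset determined by a peripheral edge, again a subset of $\Sat_0(\hat c)$. The remaining tail begins at a point of $N_A(g^*P^*)$ (because $x\in g^*P^*$ is tracked by $c$), so by the ``moreover'' part it lies in $N_B(g^*P^*)$; and $x\in g^*P^*\subset\Sat_0(\hat c)$. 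Enlarging $B$ to dominate $\epsilon(2A+1)+\epsilon+A$ and $A_1$ finishes the proof.

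The step I expect to be the main obstacle is the cofinal-entry claim together with the stabilization $v_m=v_{g^*P^*}$: this is precisely where the a priori, $R$--dependent containment must be upgraded to the uniform, $\epsilon$--dependent neighborhood, and where $\hat{\Gamma}$--distance estimates near the parabolic vertex $v_{g^*P^*}$ have to be converted into $d_S$--estimates. Both conversions rest on the bounded penetration and the uniform neighborhood quasiconvexity of the peripheral cosets, and the argument is the reversed analogue of the ``$\set{v_m}{m\ge 0}$ is infinite'' step in Lemma \ref{l5}.
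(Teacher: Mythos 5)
Your overall route is the paper's: run the Lemma \ref{l5} construction, use $c\subset N_R(g^*P^*)$ to bound the lengths of the $\hat c_n$ and hence make $\set{v_m}{m\geq 0}$ finite, locate $v_{g^*P^*}$ on the $\hat c_n$, transfer the tracking estimate to the given geodesic $\hat c$, and prove the ``moreover'' clause by feeding cofinally many entry points of $c$ into a uniform neighborhood of $g^*P^*$ to the uniform neighborhood quasiconvexity property. Your endgame (comparing $\hat c$ with the geodesic found inside $\hat c_n$ via the thin bigon of Lemma \ref{l2}, where the paper instead splices $\hat c$ into $\hat c_n$ and reapplies Lemma \ref{l3}) and your $\Sat_0$ covering argument are fine.

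The gap is in your justification of the stabilization $v_m=v_{g^*P^*}$ for large $m$. First, ``$z_m$ is too far from $z_0$ in $d_S$ to be covered by an $S$--edge of $\hat c_n$'' is not a valid inference: the intervals $c_e$ attached to $S$--edges have bounded $d_S$--length, but nothing prevents one of them from sitting arbitrarily far out along $c$, so a distant $z_m$ may perfectly well be covered by an $S$--edge. Second, even when $z_m$ is covered by a peripheral edge, the $d_S$--jump across that particular edge need not exceed the penetration diameter, so bounded penetration does not identify its coset as $g^*P^*$. What you actually need is only that $v_{g^*P^*}$ lies on $\hat c_n$ for some $n$, together with one geodesic from $z_0$ to $v_{g^*P^*}$ contained in infinitely many $\hat c_n$. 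The paper gets this by passing to a subsequence on which the tracking coset $g_{m_n}P_{m_n}$ equals a fixed $g'P'$; since $d_S(z_{m_n},g'P')\leq A_2$ with $z_{m_n}\to\infty$, uniform neighborhood quasiconvexity forces an entire infinite subray of $c$ into a bounded neighborhood of $g'P'$, so $N_{A_3}(g'P')\cap N_R(g^*P^*)$ has infinite diameter and bounded penetration gives $g'P'=g^*P^*$; it then rules out $v_{g^*P^*}\notin\hat c_n$ for all $n$ because a single geodesic cannot contain more than two vertices of $g^*P^*$. Alternatively, the pigeonhole argument you correctly deploy for the cofinal-entry claim --- at most $2R+1$ covering intervals, so as $n\to\infty$ some interval has $d_S$--diameter exceeding both the $S$--edge bound and the penetration diameter and must therefore come from a $g^*P^*$--peripheral edge --- already proves $v_{g^*P^*}\in\hat c_n$ for large $n$ if you apply it to $[z_0,z_n]$ rather than to a hypothetical avoiding tail; route the argument through that instead of through the per-$z_m$ claim.
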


\begin{proof}
Let $A=A(\epsilon)$ be the constant from Lemma \ref{l3}. Since $(gP)_{{gP}\in \Pi}$ is a uniform neighborhood quasiconvex collection of subsets of $\Gamma(G,S)$, then there is some $A_1=A_1(A,\epsilon)\geq 0$ such that any $\epsilon$--quasigeodesic segment whose endpoints lie in the $A$--neighborhood of any set $gP$ must lie in the $A_1$--neighborhood of $gP$. Choose $(z_n)$ in c such that $z_0$ is the initial endpoint of $c$ and $d_S(z_0,z_n)\rightarrow \infty$. For each $n$, choose a geodesic $\hat{c}_n$ in $\hat{\Gamma}(G,S,\PP)$ with the endpoints $z_0$ and $z_n$.

By using the same argument as we used in Lemma \ref{l5}, we could assume that for each $m\geq 0$, there is a vertex $v_m$ in $\hat{\Gamma}(G,S,\PP)$ such that $v_m\in\hat{c}_n$ for all $n>m$. Moreover, if $v_m$ is $G$--vertex then $d_S(v_m, z_m) \leq\epsilon(2A+1)+\epsilon+A$ and if $v_m=v_{gP}$ for some peripheral left coset $gP$, then $d_S(z_m,gP)\leq A_1$.

Since $c$ lies in the $R$--neighborhood of $g^*P^*$, then the length of each $\hat{c}_n$ is at most $2R+1$. This implies that the set $\set{v_m}{m\geq 0}$ is finite. For each $m\geq 0$, if $v_m$ is a $G$--vertex, then we choose some $g_mP_m$ containing $v_m$, otherwise we choose $g_mP_m=gP$ where $v_m=v_{gP}$. Thus, the set $\set{g_mP_m}{m\geq 0}$ is also finite. Also $d_S(z_m,g_mP_m)\leq A_2$, where $A_2=\max\bigl\{\epsilon(2A+1)+\epsilon+A, A_1\bigr\}$.

We could find a subsequence $(m_n)$ and $g'P'\in \set{g_mP_m}{m\geq 0}$ such that $g_{m_n}P_{m_n}=g'P'$. This implies that $d_S(z_{m_n},g'P')\leq A_2$. Since $g'P'$ is a uniform neighborhood quasiconvex set, then there is a number $A_3$ such that $N_{A_3}(g'P')$ contains an infinite subray of $c$. In particular, $\diam\bigl(N_{A_3}(g'P')\cap N_R(g^*P^*)\bigr)$ is infinite. This implies that $g'P'=g^*P^*$ by the bounded penetration property.

If $v_{g^*P^*}$ does not lie in any $\hat{c}_n$, then $v_{m_n}$ must lie in $g_{m_n}P_{m_n}=g^*P^*$ and $d_S(z_{m_n},v_{m_n})\leq \epsilon(2A+1)+\epsilon+A$ for all $n$. It is a contradiction since the geodesic $\hat{c}_k$ contain more than two points in $g^*P^*$ for some $k$. Thus, $v_{g^*P^*}$ must lie in some $\hat{c}_n$.

We replace the geodesic segment of $\hat{c}_n$ that emanates from the initial point of $c$ to the point $v_{g^*P^*}$ by $\hat{c}$, then the new path $\hat{c}'_n$ is also a geodesic with two endpoints in $c$. This implies that each $G$--vertex of $\hat{c}'_n$ lies in the $A$--neighborhood of $c$. In particular, each $G$--vertex of $\hat{c}$ lies in the $A$--neighborhood of $c$.

If $z^*$ is a point in $c$ such that $d(z^*,g^*P^*)< A$, then the subray of $c$ that emanates from $z^*$ lies in the $A_3$--neighborhood of $g^*P^*$ for some $A_3$ since $d_S(z_{m_n},g^*P^*)\leq A_2$ and $g^*P^*$ is a uniform neighborhood quasiconvex set. Therefore, it is obvious that the subray of $c$ that emanates from $z^*$ lies in the $B$--neighborhood of $g^*P^*$ and $c$ lies in the $B$--neighborhood of $\Sat_0(\hat{c})$ for some $B(\epsilon)$ with respect to the metric $d_S$.
\end{proof}

\section{The connection among the $\CAT(0)$ geometry, the Cayley graph and the coned off Cayley graph}
\label{connection}

In this section, we build the connection between the space that a relatively hyperbolic group acts on and the associated graphs of the group (the Cayley graph and the coned off Cayley graph). This connection is the key step to prove the Main Theorem in Section \ref{Main Theorem}. 

From now, we denote the metric in $\Gamma(G,S)$ by $d_S$, the metric in $\hat{\Gamma}(G,S,\PP)$ by $d$, and the metric in $X$ by $d_X$. We suppose a finitely generated relatively hyperbolic group $G$ acts properly and cocompactly on a $\CAT(0)$ space $X$. Therefore, we have a $G$--equivariant map $\Phi\colon \Gamma(G,S)\to X$ that satisfies the inequality
\begin{equation} \label{eq:qi} \frac{1}{K}d_S(u,u')-1\leq d_X\bigl(\Phi(u),\Phi(u')\bigr)\leq Kd_S(u,u')\end{equation}
for all $u,u'\in \Gamma(G,S)$ and $N_K\bigl(\Phi(\Gamma(G,S))\bigr)=X$ for some $K\geq1$. In particular, $\Phi$ is a $K$--quasi-isometry. We note the reader that we will use the inequality \eqref{eq:qi} many times in the rest of the paper.

%

We define $Y_{gP}=\Phi(gP)$ for each peripheral left coset $gP$ and we call it a \emph{peripheral space}.

For each peripheral space $Y_{gP}$, we define its boundary in $X$, denoted $\partial Y_{gP}$, to be the set of all $[\gamma]\in\partial X$ where $\gamma \subset N_R(Y_{gP})$ for some R. Each element in $\partial Y_{gP}$ is said to be a \emph{peripheral limit point}. 

We observe that each $Y_{gP}$ depends on the $G$--equivariant quasi-isometric map $\Phi$ but $\partial Y_{gP}$ does not. Moreover, the group $G$ acts on the $\CAT(0)$ boundary of $X$ and the set of all peripheral limit points is invariant under the action of $G$. 

\begin{lem} 
$(Y_{gP})_{{gP}\in \Pi}$ is a quasidense, locally finite, bounded penetration and uniform neighborhood quasiconvex collection of subsets in $X$.
\end{lem}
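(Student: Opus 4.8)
The plan is to transfer each of the four properties of the peripheral-left-coset collection $(gP)_{gP\in\Pi}$ in $\Gamma(G,S)$ to the collection of peripheral spaces $(Y_{gP})_{gP\in\Pi}$ in $X$ by exploiting the $G$--equivariant quasi-isometry $\Phi\colon\Gamma(G,S)\to X$. The key structural fact is that $Y_{gP}=\Phi(gP)$ by definition, so each property is already known to hold for $(gP)_{gP\in\Pi}$ by the Dru\c{t}u--Sapir lemma, and I only need the quasi-isometry invariance lemma stated earlier in the excerpt. However, that invariance lemma is phrased for a fixed space; here the subtlety is that $\Phi(\Gamma(G,S))$ is only $K$--dense in $X$, so strictly speaking $(Y_{gP})$ lives in $X$ rather than in the image, and I must absorb that $K$--error into the constants at each step.

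**First I would** establish the two metric comparisons I will use repeatedly: for $u,u'$ vertices of $\Gamma(G,S)$ the inequality \eqref{eq:qi} lets me pass between $d_S$ and $d_X$ up to the multiplicative constant $K$ and additive constant $1$, and every point of $X$ is within $d_X$--distance $K$ of $\Phi(\Gamma(G,S))$. With these in hand I treat the four properties in increasing order of difficulty. \emph{Quasidensity} is nearly immediate: since $(gP)_{gP\in\Pi}$ is $t$--quasidense in $\Gamma(G,S)$ and $\Phi$ is $K$--dense in $X$, every point of $X$ lies within $K$ of some $\Phi(v)$, which lies within $Kt+1$ of some $Y_{gP}$, giving quasidensity with constant $K+Kt+1$. \emph{Local finiteness} follows similarly: given $x\in X$ and $u\ge 0$, any $Y_{gP}$ meeting $\ball{x}{u}$ in $X$ forces $gP$ to meet a controlled $d_S$--ball around a nearby $G$--vertex, and local finiteness in $\Gamma(G,S)$ bounds the number of such $gP$.

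**For the remaining two properties** I would use the full strength of the invariance lemma. For \emph{bounded penetration}, given $r\ge 0$ I pull the $r$--neighborhoods $N_r(Y_{gP})\cap N_r(Y_{gP'})$ in $X$ back to $d_S$--neighborhoods $N_{r'}(gP)\cap N_{r'}(gP')$ in $\Gamma(G,S)$ with $r'=K(r+K)$, apply the bounded penetration property there to bound the $d_S$--diameter by some $D(r')$, and push forward to bound the $d_X$--diameter by $KD(r')$. For \emph{uniform neighborhood quasiconvexity}, the one genuinely delicate point is that a $(\lambda,C)$--quasigeodesic segment $q$ in $X$ does not pull back to a quasigeodesic in $\Gamma(G,S)$ automatically, because $\Phi$ is not surjective; I would instead choose $G$--vertices tracking the endpoints of $q$, join them by a $d_S$--geodesic, verify via \eqref{eq:qi} that its $\Phi$--image is a quasigeodesic with controlled constants Hausdorff-close to $q$, apply the quasiconvexity property in $\Gamma(G,S)$ to confine that geodesic to a neighborhood of $gP$, and finally push forward.

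**The hardest part** will be this last push-and-pull for uniform neighborhood quasiconvexity, precisely because quasi-isometries do not preserve quasigeodesics in a literal pointwise sense and the non-surjectivity of $\Phi$ forces me to replace the given quasigeodesic by a nearby $\Phi$--image of a genuine $\Gamma(G,S)$--path; keeping all the constants $(\lambda,C,r,K)$ bookkept correctly is the main obstacle, though conceptually it is routine once I invoke the invariance lemma. I expect the whole argument to reduce to four applications of quasi-isometry invariance with explicit constant tracking, and no new idea beyond \eqref{eq:qi} and the Dru\c{t}u--Sapir lemma is needed.
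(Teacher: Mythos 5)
Your overall route is exactly the paper's: the paper disposes of this lemma in one sentence by citing the Dru\c{t}u--Sapir lemma together with the earlier (unproved) lemma that the properties of Definition \ref{d1} are quasi-isometry invariants, and your proposal is essentially an expansion of that sentence. Your treatments of quasidensity, local finiteness and bounded penetration are correct and are the intended arguments.

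The one step that would fail as written is your argument for uniform neighborhood quasiconvexity. You propose to take a $(\lambda,C)$--quasigeodesic segment $q$ in $X$ with endpoints in $N_r(Y_{gP})$, track only its two endpoints by $G$--vertices, join those by a $d_S$--geodesic, and then assert that the $\Phi$--image of that geodesic is Hausdorff-close to $q$. In a general $\CAT(0)$ space there is no Morse lemma: a quasigeodesic and a geodesic with the same endpoints need not fellow-travel (already in a Euclidean flat a $(3,0)$--quasigeodesic can wander linearly far from the segment joining its endpoints, and the spaces $X$ in this paper are explicitly allowed to contain flats). So confining the image of that geodesic to a neighborhood of $Y_{gP}$ says nothing about $q$ itself. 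The repair is standard and is surely what the paper intends by ``obvious'': discretize $q$ at scale $1$, choose for each sample point $q(t_i)$ a $G$--vertex $v_i$ with $d_X\bigl(\Phi(v_i),q(t_i)\bigr)\leq K$, and join consecutive $v_i$ by $d_S$--geodesics; inequality \eqref{eq:qi} shows the resulting path is a $(\lambda',C')$--quasigeodesic in $\Gamma(G,S)$ with $\lambda',C'$ depending only on $\lambda,C,K$, and its endpoints lie in a controlled $d_S$--neighborhood of $gP$. Now the uniform neighborhood quasiconvexity of $(gP)_{gP\in\Pi}$ in $\Gamma(G,S)$ --- which is stated for quasigeodesics, not just geodesics --- confines every $v_i$ to some $N_{D'}(gP)$, and pushing forward through $\Phi$ confines all of $q$ to a $(KD'+K+\lambda+C)$--neighborhood of $Y_{gP}$. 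With this substitution in place of the endpoint-geodesic device, your proof is complete and coincides with the paper's intended argument.
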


\begin{proof}
It is obvious since $\Gamma(G,S)$ and $X$ are quasi-isometric under $\Phi$, the collection $(gP)_{{gP}\in \Pi}$ is quasidense, locally finite, bounded penetration and uniformly neighborhood quasiconvex in $\Gamma(G,S)$, and $(Y_{gP})_{{gP}\in \Pi}$ is the image of $(gP)_{{gP}\in \Pi}$ under the quasi-isometric map $\Phi$.
\end{proof}

\begin{lem} \label{l31}
$\partial Y_{gP}$ is non--empty iff $gP$ is infinite and $\partial Y_{gP}\cap \partial Y_{g'P'}=\emptyset$ when $gP\neq g'P'$.
\end{lem}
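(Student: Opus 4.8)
The statement splits into two independent assertions, and I would treat them separately. Throughout I use that the collection $(Y_{gP})_{gP\in\Pi}$ is locally finite, has bounded penetration, and is uniformly neighborhood quasiconvex, as established just above, together with the quasi-isometry inequality \eqref{eq:qi} for $\Phi$.

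For the first assertion I would prove the two directions by different means. If $gP$ is finite, then $Y_{gP}=\Phi(gP)$ is a finite, hence bounded, subset of $X$; every neighborhood $N_R(Y_{gP})$ is then bounded and can contain no geodesic ray, so $\partial Y_{gP}=\emptyset$. Conversely, suppose $gP$ is infinite. Since $S$ is finite, $\Gamma(G,S)$ is locally finite, so the infinite vertex set $gP$ has infinite $d_S$-diameter, and by \eqref{eq:qi} the peripheral space $Y_{gP}$ has infinite $d_X$-diameter. I would then fix a basepoint $y_0\in Y_{gP}$ and points $y_n\in Y_{gP}$ with $d_X(y_0,y_n)\to\infty$, and consider the geodesic segments $[y_0,y_n]$. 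Each is a $(1,0)$--quasigeodesic with endpoints in $Y_{gP}$, so the uniform neighborhood quasiconvexity of $(Y_{gP})$ supplies a single constant $D=D(1,0,0)$ with $[y_0,y_n]\subset N_D(Y_{gP})$ for all $n$. Since $X$ is proper, an Arzel\`a--Ascoli argument extracts a subsequence converging uniformly on compact sets to a geodesic ray $\gamma$ based at $y_0$, and $\gamma\subset\overline{N_D(Y_{gP})}$. Hence $[\gamma]\in\partial Y_{gP}$, so the boundary is nonempty.

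For the second assertion I would argue by contradiction via the bounded penetration property. Suppose $gP\neq g'P'$ but some $\xi\in\partial Y_{gP}\cap\partial Y_{g'P'}$. By definition of these boundaries there are geodesic rays $\gamma_1,\gamma_2$ representing $\xi$ with $\gamma_1\subset N_R(Y_{gP})$ and $\gamma_2\subset N_{R'}(Y_{g'P'})$. As representatives of the same point of $\partial X$, $\gamma_1$ and $\gamma_2$ have finite Hausdorff distance $H$, whence $\gamma_2\subset N_{R+H}(Y_{gP})$ as well. Setting $r=\max\{R+H,R'\}$, the ray $\gamma_2$ lies in $N_r(Y_{gP})\cap N_r(Y_{g'P'})$, so this intersection is unbounded, i.e.\ of infinite diameter. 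This contradicts the bounded penetration property, which for the distinct indices $gP,g'P'$ bounds $\diam\bigl(N_r(Y_{gP})\cap N_r(Y_{g'P'})\bigr)$ by some $D=D(r)$. Therefore the intersection of the two boundaries is empty.

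I expect the genuine difficulty to lie in the converse half of the first assertion, namely \emph{producing} a geodesic ray that remains within a bounded neighborhood of $Y_{gP}$. Existence of a limiting ray uses properness of $X$ (available since $G$ acts properly and cocompactly), but the essential point is that this limit does not drift away from $Y_{gP}$; this is exactly what uniform neighborhood quasiconvexity controls, since it confines all the approximating segments $[y_0,y_n]$ to one neighborhood $N_D(Y_{gP})$ with $D$ independent of $n$. Once that uniform bound is secured the limit ray automatically represents a peripheral limit point, and the remaining verifications are routine.
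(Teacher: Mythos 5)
Your proposal is correct and follows the same route the paper intends: the paper dismisses the first assertion as obvious and attributes the second to the bounded penetration property of $(Y_{gP})_{gP\in \Pi}$, which is exactly the contradiction you run (a common boundary point forces $N_r(Y_{gP})\cap N_r(Y_{g'P'})$ to be unbounded). Your Arzel\`a--Ascoli construction of a limiting ray confined to $N_D(Y_{gP})$ via uniform neighborhood quasiconvexity is just the standard filling-in of the ``obvious'' half.
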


\begin{proof}
The first statement is obvious and the second statement is implied by the bounded penetration property of $(Y_{gP})_{{gP}\in \Pi}$.
\end{proof}

From the result of Lemma \ref{l31}, the following concept is an equivalent relation:
\begin{defn}
Two peripheral limit points are said to be of \emph{the same type} if they both lie in $\partial Y_{gP}$ for some peripheral left coset $gP$.
\end{defn} 

\begin{lem} \label{l32}
If $x$ and $y$ are two peripheral limit points of the same type and $g$ is any group element in $G$, then $gx$ and $gy$ are also two peripheral limit points of the same type.
\end{lem}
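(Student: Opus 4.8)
The plan is to exploit the $G$--equivariance of $\Phi$ together with the fact that $G$ acts on $X$ by isometries, so that left translation by $g$ carries the boundary of one peripheral space onto the boundary of another. The whole argument is a short unwinding of definitions; no geometry beyond isometry--invariance of neighborhoods is needed.

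First I would invoke the definition of \emph{same type}: since $x$ and $y$ are peripheral limit points of the same type, there is a peripheral left coset $hP$ (I rename the coset to avoid collision with the group element $g$) with $x,y\in\partial Y_{hP}$. Unwinding the definition of $\partial Y_{hP}$, I choose geodesic rays $\gamma$ and $\sigma$ representing $x$ and $y$ respectively, and positive numbers $R_1,R_2$ with $\gamma\subset N_{R_1}(Y_{hP})$ and $\sigma\subset N_{R_2}(Y_{hP})$.

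Next, because $G$ acts on $X$ by isometries, left translation by $g$ sends a geodesic ray to a geodesic ray and the class $[\gamma]$ to $[g\gamma]=gx$, and likewise $gy=[g\sigma]$; since isometries preserve $R$--neighborhoods, I also get $g\gamma\subset N_{R_1}(gY_{hP})$ and $g\sigma\subset N_{R_2}(gY_{hP})$. The key computation is then to identify $gY_{hP}$: as $Y_{hP}=\Phi(hP)$ and $\Phi$ is $G$--equivariant, $gY_{hP}=g\Phi(hP)=\Phi(g\cdot hP)=\Phi\bigl((gh)P\bigr)=Y_{(gh)P}$, and $(gh)P$ is again a peripheral left coset. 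Hence $g\gamma\subset N_{R_1}\bigl(Y_{(gh)P}\bigr)$ and $g\sigma\subset N_{R_2}\bigl(Y_{(gh)P}\bigr)$, so both $gx$ and $gy$ lie in $\partial Y_{(gh)P}$, and by definition they are peripheral limit points of the same type.

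There is no serious obstacle here; the only points requiring care are the equivariance identity $gY_{hP}=Y_{(gh)P}$ and the observation that an isometry sending $\gamma$ into $N_R(Y_{hP})$ sends $g\gamma$ into the $R$--neighborhood of the translated set $gY_{hP}$. Both are immediate from the standing hypotheses that $\Phi$ is $G$--equivariant and that $G$ acts on $X$ by isometries (which is why the induced action on $\partial X$ preserves the set of peripheral limit points as already noted).
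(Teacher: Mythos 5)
Your proof is correct, and it is exactly the definition-unwinding argument the paper has in mind: the author declares the lemma ``obvious'' and leaves it to the reader, and your key identity $gY_{hP}=g\Phi(hP)=\Phi\bigl((gh)P\bigr)=Y_{(gh)P}$ together with isometry-invariance of $R$--neighborhoods is the intended content. Nothing to add.
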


The proof for this lemma is obvious and we leave it to the reader. 

\begin{rem}
From the result of Lemma \ref{l32}, we see that the group $G$ acts on the space obtained from $\partial X$ by identifying all peripheral limit points of the same type.
\end{rem}

\begin{lem} \label{l4}
There are constants $\epsilon >0$, $r>0$ such that the following holds. If $\alpha$ is a geodesic in $X$, then there is an $\epsilon$--quasigeodesic c in $\Gamma(G,S)$ such that the Hausdorff distance between $\Phi(c)$ and $\alpha$ is at most $r$. Moreover, if $\alpha$ is a geodesic segment with two endpoints $\Phi(g)$ and $\Phi(h)$, where $g,h\in G$, then $c$ could be chosen with endpoints $g$ and $h$. If $\alpha$ is a geodesic ray with initial point $\Phi(g)$, where $g\in G$, then $c$ could be chosen with initial points $g$.
\end{lem}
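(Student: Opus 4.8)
The plan is to pull the geodesic $\alpha$ back through a coarse inverse of $\Phi$ obtained by sampling $\alpha$ at unit intervals. First I would construct $\Psi\colon X\to G$: since $N_K\bigl(\Phi(\Gamma(G,S))\bigr)=X$, for every $x\in X$ there is a group element $\Psi(x)$ with $d_X\bigl(\Phi(\Psi(x)),x\bigr)\leq K$, and for a point of the special form $x=\Phi(g)$ I would simply set $\Psi(x)=g$. A routine computation from \eqref{eq:qi} then shows that $\tfrac{1}{K}d_X(x,x')-2\leq d_S\bigl(\Psi(x),\Psi(x')\bigr)\leq Kd_X(x,x')+(2K^2+K)$, so $\Psi$ is itself a quasi-isometry whose constants depend only on $K$, and $\Phi\circ\Psi$ moves each point by at most $K$.

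Next I would build $c$. Parametrize $\alpha$ by arc length on its domain ($[0,L]$ for a segment, $[0,\infty)$ for a ray, or $\R$ for a line), set $t_i=i$ at the integer parameters (together with the right endpoint $t=L$ in the segment case), put $g_i=\Psi\bigl(\alpha(t_i)\bigr)$, and let $c$ be the concatenation of geodesic edge--paths $[g_i,g_{i+1}]$ in $\Gamma(G,S)$. Since $d_X\bigl(\alpha(t_i),\alpha(t_{i+1})\bigr)\leq 1$, the upper bound for $\Psi$ gives $d_S(g_i,g_{i+1})\leq 2K^2+2K=:D$, so every segment of $c$ has length at most $D$. When $\alpha$ is a segment with endpoints $\Phi(g),\Phi(h)$ I would take $g_0=g$ and $g_n=h$ (a legitimate value of $\Psi$, since these points map exactly onto the endpoints), and likewise $g_0=g$ for a ray; this is precisely what yields the prescribed endpoints.

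To verify that $c$ is an $\epsilon$--quasigeodesic I would estimate an arbitrary subpath $q$ running from a point of $[g_i,g_{i+1}]$ to a point of $[g_j,g_{j+1}]$ with $i\leq j$. Its length is at most $D(j-i+1)$, while the lower bound for $\Psi$ together with the fact that $\alpha$ is geodesic (so $d_X\bigl(\alpha(t_i),\alpha(t_j)\bigr)=t_j-t_i$) gives $d_S(q_+,q_-)\geq \tfrac{1}{K}(j-i-1)-2-2D$. Eliminating $j-i$ between these two inequalities produces a bound $\ell(q)\leq\epsilon\,d_S(q_+,q_-)+\epsilon$ with $\epsilon$ depending only on $K$. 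For the Hausdorff estimate, each $\alpha(t)$ lies within $1+K$ of the nearest $\Phi(g_i)$, and each point of $\Phi(c)$ lies within $KD+2K$ of some $\alpha(t_i)$ by the upper bound in \eqref{eq:qi}; taking $r$ to be the larger of these gives the claim, again with $r$ depending only on $K$.

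The step demanding the most care is the quasigeodesic verification: one must check $\ell(q)\leq\epsilon\,d_S(q_+,q_-)+\epsilon$ for \emph{every} subpath, including those whose endpoints lie in the interiors of edges and those spanning only one or two consecutive samples, where $d_S(q_+,q_-)$ may be small or zero. This is exactly the regime that forces the use of the additive constant and the uniform per--segment bound $D$, and it is the only place where anything beyond a direct application of \eqref{eq:qi} is needed.
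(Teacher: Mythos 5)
Your construction is the standard detailed version of exactly what the paper has in mind: the paper's entire proof is the single sentence that the claim ``is obvious since $X$ and $\Gamma(G,S)$ are quasi-isometric under the map $\Phi$,'' and your argument (coarse inverse $\Psi$, unit-interval sampling of $\alpha$, concatenated geodesic edge-paths, the subpath estimate, and the two-sided Hausdorff bound) is the routine verification being elided. The proposal is correct, with all constants depending only on $K$ as required.
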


\begin{proof} 
It is obvious since $X$ and $\Gamma(G,S)$ are quasi-isometric under the map $\Phi$.
\end{proof}

\begin{defn}
A pair of paths $(c,\hat{c})$ in $\hat{\Gamma}(G,S,\PP)$ is said to be an \emph{$(\epsilon, A)$--nice pair} if $c$ is an $\epsilon$--quasigeodesic ray in $\Gamma(G,S)$, $\hat{c}$ is a geodesic ray in $\hat{\Gamma}(G,S,\PP)$ and each $G$--vertex of $\hat{c}$ lies in the $A$--neighborhood of $c$ with respect to the metric $d_S$. A pair of paths $(c,\hat{c})$ in $\hat{\Gamma}(G,S,\PP)$ is said to be a \emph{nice pair} if it is an $(\epsilon, A)$--nice pair for some $\epsilon, A$. A pair of paths $(c,\hat{c})$ in $\hat{\Gamma}(G,S,\PP)$ is said to be an \emph{$(\epsilon, A, r)$--nice pair of some geodesic $\alpha$ in $X$} if $(c,\hat{c})$ is an $(\epsilon, A)$--nice pair and the Hausdorff distance between $\Phi(c)$ and $\alpha$ is at most $r$. A pair of paths $(c,\hat{c})$ in $\hat{\Gamma}(G,S,\PP)$ is said to be a \emph{nice pair of some geodesic $\alpha$ in $X$} if $(c,\hat{c})$ is an $(\epsilon, A, r)$--nice pair of $\alpha$ for some $\epsilon, A, r$.
\end{defn}

\begin{rem}
If $(c,\hat{c})$ is an nice pair in $\hat{\Gamma}(G,S,\PP)$, the Hausdorff distance between them is finite with respect to the metric $d$.
\end{rem}

\begin{lem}\label{l7}
There are positive constants $\epsilon, A, r, B$ such that the following holds. Let $\alpha$ be a geodesic ray in $X$ such that $[\alpha]\notin \partial Y_{gP}$ for any $gP$. Then there is an $(\epsilon, A, r$)--nice pair $(c,\hat{c})$ of $\alpha$ such that $c$ lies in the $B$--neighborhood of $\Sat_0(\hat{c})$ with respect to $d_S$. Moreover, if $\alpha$ is a geodesic ray with initial point $\Phi(g)$, where $g\in G$, then $c$ and $\hat{c}$ could be chosen with initial points $g$. 
\end{lem}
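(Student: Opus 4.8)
The plan is to pull the geodesic ray $\alpha$ back from $X$ into the Cayley graph, verify that the resulting quasigeodesic is not trapped in a bounded neighborhood of any peripheral left coset, and then lift it to a geodesic ray in the coned off Cayley graph. First I would apply Lemma \ref{l4} to fix constants $\epsilon, r > 0$ and produce an $\epsilon$--quasigeodesic ray $c$ in $\Gamma(G,S)$ whose image $\Phi(c)$ has Hausdorff distance at most $r$ from $\alpha$; in the pointed case, where $\alpha$ emanates from $\Phi(g)$, Lemma \ref{l4} allows me to choose $c$ with initial point $g$. The output constants $A = A(\epsilon)$ and $B = B(\epsilon)$ of the lemma will come from Lemma \ref{l5} once $\epsilon$ is pinned down.

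The crucial step is to check that $c$ is \emph{not} contained in $N_R(gP)$ for any peripheral left coset $gP$ and any $R > 0$, which is precisely the hypothesis needed to invoke Lemma \ref{l5} rather than Lemma \ref{l15}. I would argue by contraposition: suppose $c \subset N_R(gP)$ for some $gP$ and some $R$. Feeding this through the quasi-isometry inequality \eqref{eq:qi}, the image $\Phi(c)$ lies in $N_{KR}(Y_{gP})$, since each vertex of $c$ is within $d_S$--distance $R$ of a point of $gP$ and $\Phi$ expands distances by at most a factor $K$. As the Hausdorff distance between $\Phi(c)$ and $\alpha$ is at most $r$, this forces $\alpha \subset N_{KR + r}(Y_{gP})$, and hence $[\alpha] \in \partial Y_{gP}$, contradicting the assumption that $[\alpha]$ is not a peripheral limit point. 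Therefore no such $gP$ and $R$ exist, and $c$ satisfies the non--containment condition required by Lemma \ref{l5}.

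With this in hand, I would apply Lemma \ref{l5} to obtain a geodesic ray $\hat{c}$ in $\hat{\Gamma}(G,S,\PP)$ sharing the initial point of $c$ (so equal to $g$ in the pointed case), with each $G$--vertex of $\hat{c}$ lying in the $A$--neighborhood of $c$ and $c$ lying in the $B$--neighborhood of $\Sat_0(\hat{c})$, both with respect to $d_S$. By definition, $(c,\hat{c})$ is then an $(\epsilon, A, r)$--nice pair of $\alpha$ satisfying the asserted saturation bound. The main obstacle is the non--containment verification of the second paragraph, since the rest is a direct assembly of the two cited lemmas; the delicate point there is correctly transferring a bounded $d_S$--neighborhood of $gP$ in $\Gamma(G,S)$ to a bounded neighborhood of $Y_{gP}$ in $X$ under $\Phi$, and recognizing that the hypothesis $[\alpha] \notin \partial Y_{gP}$ is exactly what excludes the trapped alternative handled by Lemma \ref{l15}.
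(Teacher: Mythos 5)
Your proposal is correct and follows exactly the route the paper intends: the paper's own proof is a one-line sketch citing Lemma \ref{l4}, Lemma \ref{l5}, and the quasi-isometry $\Phi$, and you have simply filled in the details, most usefully the contrapositive verification that $c\not\subset N_R(gP)$ (the hypothesis of Lemma \ref{l5}) via the transfer $c\subset N_R(gP)\implies \alpha\subset N_{KR+r}(Y_{gP})\implies [\alpha]\in\partial Y_{gP}$. No gaps; this is the same argument, just written out.
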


\begin{proof} 
The lemma can be proved by using Lemma \ref{l4}, Lemma \ref{l5}, and the facts $X$ and $\Gamma(G,S)$ are quasi-isometric under $\Phi$ and $\bigl(Y_{gP}\bigr)_{{gP}\in \Pi}$ is the image of $(gP)_{{gP}\in \Pi}$ under the quasi-isometric map $\Phi$.
\end{proof}

The following two lemmas will allow us to define a bijection from the set of non-peripheral limit points of $\partial X$ to the Gromov boundary of the coned off Cayley graph $\hat{\Gamma}(G,S,\PP)$. Lemma \ref{l9} shows this map is surjective and Lemma \ref{l11} shows it is injective.

\begin{lem} \label{l9}
For each geodesic ray $\pi$ in $\hat{\Gamma}(G,S,\PP)$ there is a geodesic ray $\gamma$ in $X$ such that $[\gamma]\notin \partial Y_{gP}$ for any peripheral left coset $gP$ and the following holds. If $(c,\hat{c})$ is an arbitrary nice pair of $\gamma$, then the Hausdorff distance between $\pi$ and $\hat{c}$ is finite in $\hat{\Gamma}(G,S,\PP)$, or $\pi$ and $\hat{c}$ are equivalent geodesic rays in $\hat{\Gamma}(G,S,\PP)$.
\end{lem}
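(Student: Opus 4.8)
The plan is to realize $\gamma$ as a geometric limit of the images of $\pi$, and then to compare an arbitrary nice pair with $\pi$ by passing back and forth through the $\CAT(0)$ geometry of $X$. Let $(g_n)_{n\ge 0}$ be the sequence of $G$--vertices of $\pi$, listed in order. Since consecutive $G$--vertices of a geodesic of $\hat\Gamma(G,S,\PP)$ are at $d$--distance $1$ and $\pi$ is a ray, we have $d(g_0,g_n)\to\infty$; as $d\le d_S$ this forces $d_S(g_0,g_n)\to\infty$, whence $d_X\bigl(\Phi(g_0),\Phi(g_n)\bigr)\to\infty$ by \eqref{eq:qi}. Because the action is proper and cocompact, $X$ is proper, so the $X$--geodesics $\gamma_n=[\Phi(g_0),\Phi(g_n)]$ subconverge, by Arzel\`a--Ascoli, to a geodesic ray $\gamma$ issuing from $\Phi(g_0)$; I pass to this subsequence.

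The key technical step is a tracking estimate: there is a constant $D_1$, independent of $m$, with $d_X\bigl(\Phi(g_m),\gamma\bigr)\le D_1$. To obtain it, fix $m$ and let $n>m$. The truncation $\pi|_{[g_0,g_n]}$ is a $\hat\Gamma(G,S,\PP)$--geodesic containing $g_m$. By Lemma~\ref{l4} there is an $\epsilon$--quasigeodesic $c^{(n)}$ in $\Gamma(G,S)$ from $g_0$ to $g_n$ whose image is within Hausdorff distance $r$ of $\gamma_n$, and by Lemma~\ref{l3} each $G$--vertex of a $\hat\Gamma(G,S,\PP)$--geodesic $\hat c^{(n)}$ from $g_0$ to $g_n$ lies in a fixed $d_S$--neighborhood of $c^{(n)}$. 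Applying Lemma~\ref{l2} to the (degenerate) geodesic triangle whose two nondegenerate sides are $\pi|_{[g_0,g_n]}$ and $\hat c^{(n)}$ produces a $G$--vertex $u$ of $\hat c^{(n)}$ with $d_S(g_m,u)\le a$; chaining these three estimates and using \eqref{eq:qi} bounds $d_X(\Phi(g_m),\gamma_n)$ by a constant $D_1$ independent of $n$. Letting $n\to\infty$, and checking that the nearest--point projections of the fixed point $\Phi(g_m)$ onto $\gamma_n$ stay in a fixed ball where $\gamma_n\to\gamma$ uniformly, gives $d_X(\Phi(g_m),\gamma)\le D_1$. Non--peripherality of $[\gamma]$ is then immediate: if $\gamma\subset N_R(Y_{g^*P^*})$ for some coset, every $g_m$ lies in a fixed $d_S$--neighborhood of $g^*P^*$, hence in a fixed $d$--neighborhood of the single vertex $v_{g^*P^*}$, contradicting $d(g_0,g_n)\to\infty$.

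It remains to show that an arbitrary nice pair $(c,\hat c)$ of $\gamma$ satisfies $\pi\sim\hat c$; I will establish the stronger statement that $\pi$ and $\hat c$ lie in bounded $d$--neighborhoods of each other. Let $(w_k)$ be the $G$--vertices of $\hat c$. By the nice--pair hypothesis together with $\Phi(c)\subset N_r(\gamma)$, every $\Phi(w_k)$, and (by the previous paragraph) every $\Phi(g_m)$, lies within a uniform $d_X$--distance of $\gamma$, and both sequences leave every bounded set. Form the two connected ``coarse paths'' in $X$ obtained by joining consecutive points $\Phi(g_m),\Phi(g_{m+1})$ (respectively $\Phi(w_k),\Phi(w_{k+1})$) by $X$--geodesics; by Lemma~\ref{l6} each such segment stays in a fixed neighborhood of $\gamma$, so each coarse path is a connected subset of a fixed neighborhood of $\gamma$ running from $\Phi(g_0)$ to infinity. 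Since nearest--point projection onto the convex ray $\gamma$ is continuous (indeed $1$--Lipschitz), the projection of each coarse path is a connected unbounded subset of $\gamma$ and hence covers a subray of $\gamma$. Cross--matching then finishes the argument: given $g_m$, its projection to $\gamma$ lies near some segment of the $\hat c$--coarse path; if that segment comes from an $S$--edge of $\hat c$, then $g_m$ is within bounded $d_S$, hence bounded $d$, of a vertex $w_k\in\hat c$; if it comes from a peripheral edge of $\hat c$ through $v_{g^*P^*}$, then $\Phi(g_m)$ lies near $Y_{g^*P^*}$, so by the uniform neighborhood quasiconvexity of $Y_{g^*P^*}$ and \eqref{eq:qi} the vertex $g_m$ lies near $g^*P^*$ in $d_S$ and within bounded $d$ of $v_{g^*P^*}\in\hat c$. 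Thus $\pi\subset N_C(\hat c)$, and the symmetric argument with the roles of the two rays interchanged gives $\hat c\subset N_{C'}(\pi)$, so their Hausdorff distance in $\hat\Gamma(G,S,\PP)$ is finite.

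I expect the main obstacle to be precisely this last comparison, because the definition of a nice pair controls the $G$--vertices of $\hat c$ by $c$ but \emph{not} conversely: where $c$ penetrates a peripheral coset $g^*P^*$ deeply in $d_S$, the geodesic $\hat c$ merely passes through the cone point $v_{g^*P^*}$ and has no $G$--vertices in that region. The device that overcomes this is to measure in the $d$--metric rather than in $d_S$, exploiting that an entire deep excursion into $g^*P^*$ collapses to within $d$--distance $\tfrac12$ of $v_{g^*P^*}$; the connected coarse paths together with $\CAT(0)$ projection (Lemma~\ref{l6}) are exactly what let me locate the relevant cone point on the correct ray. A secondary point needing care is the interchange of limits in the tracking estimate, where one must verify that the nearest--point projections remain in a bounded region so that the uniform convergence $\gamma_n\to\gamma$ can be applied.
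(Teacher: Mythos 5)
Your proof is correct and takes essentially the same route as the paper's: $\gamma$ is obtained as a limit of the geodesics $[\Phi(g_0),\Phi(g_n)]$, the tracking estimate $d_X(\Phi(g_m),\gamma)\le D_1$ comes from Lemmas \ref{l4} and \ref{l3} (your extra pass through Lemma \ref{l2} is harmless but unnecessary, since $\pi|_{[g_0,g_n]}$ is itself a geodesic in $\hat{\Gamma}(G,S,\PP)$ to which Lemma \ref{l3} applies directly), and the final comparison uses the same piecewise-geodesic path near $\gamma$ together with the $S$--edge versus peripheral-edge dichotomy and uniform neighborhood quasiconvexity. The only cosmetic difference is that you run the cross-matching in the direction $\pi\subset N_C(\hat c)$ first, whereas the paper establishes $\hat c\subset N(\pi)$ first; both yield finite Hausdorff distance.
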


\begin{proof}
Let $\left\{g_n\right\}$ be the set of all $G$--vertices of $\pi$.
For each $n\geq 1$ choose a geodesic $\gamma_n$ connecting $\Phi(g_0)$ and $\Phi(g_n)$ in X.
By Lemma \ref{l4}, there is an $\epsilon$--quasigeodesic $c_n$ with the endpoints $g_0$ and $g_n$ such that the Hausdorff distance between $\Phi(c_n)$ and $\gamma_n$ is at most $r$ for some uniform constants $\epsilon$ and $r$. 
There is a subsequence $(\gamma_{m_n}$) of $(\gamma_n$) that approaches to a geodesic $\gamma$ in $X$.

For each $m\geq 0$ and $m_n>m$, there is a point $z_{m_n}$ in $c_{m_n}$ such that $d_S(z_{m_n}, g_m)\leq A$ for some $A=A(\epsilon)$ defined in Lemma \ref{l3}. By inequality \eqref{eq:qi}, $d_X\bigl(\Phi(g_m),\Phi(z_{m_n})\bigr)\leq KA$. Since the Hausdorff distance between $\Phi(c_{m_n})$ and $\gamma_{m_n}$ is at most $r$, then there is $w_{m_n}\in \gamma_{m_n}$ such that $d_X\bigl(\Phi(z_{m_n}), w_{m_n}\bigr)\leq r$. This implies that $d_X\bigl(\Phi(g_m), w_{m_n}\bigr)\leq KA+r$. We could assume that $w_{m_n}$ approaches $w\in \gamma$. Thus, $\Phi(g_m)$ lies in the $(KA+r)$--neighborhood of $\gamma$.

For each $n\geq1$, choose $\alpha_n$ to be a geodesic that connects $\Phi(g_{n-1})$ and $\Phi(g_{n})$ and we define $\alpha=\alpha_1\cup\alpha_2\cup\alpha_3\cdots$. Since the endpoints of each $\alpha_i$ lie in the $(KA+r)$--neighborhood of $\gamma$, then $\alpha$ also lies in the $(KA+r)$--neighborhood of $\gamma$ by Lemma \ref{l6}. Since $d_X\bigl(\Phi(g_0), \Phi(g_n)\bigr)$ approaches infinity, then $\alpha$ is an infinite ray. Thus, $\gamma$ lies in the $M_1$--neighborhood of $\alpha$ for some $M_1$.

If $\gamma\subset N_R(Y_{gP})$ for some positive number $R$ and some peripheral left coset $gP$, then $\alpha \subset N_{R+KA+r}(Y_{gP})$. In particular, $\Phi(g_n)$ lies in the $N_{R+KA+r}(Y_{gP})$ for all $n$. This implies $g_n$ lies in the $\bigl(K(R+KA+r)+K\bigr)$--neighborhood of $gP$ for all $n$ by the inequality \eqref{eq:qi}. Thus, $d(g_0,g_n)$ are bounded. This is a contradiction since $\pi$ is a geodesic. Thus, $[\gamma]\notin\partial Y_{gP}$ for any peripheral left coset $gP$.

Let $(c,\hat{c})$ be any nice pair of $\gamma$. Thus, the Hausdorff distance between $\gamma$ and $\Phi(c)$ is at most $r'<\infty$ and $G$--vertices of $\hat{c}$ lie in some $A'$--neighborhood of $c$ with respect to $d_S$. (The existence of $(c,\hat{c})$ is guaranteed by Lemma \ref{l7}.) 

For each $u$ in $\hat{c}$ there is $v$ in $c$ such that $d_S(u,v)\leq A'$. Since the Hausdorff distance between $\Phi(c)$ and $\gamma$ is at most $r'$, then there is $z$ in $\gamma$ such that $d_X\bigl(\Phi(v),z\bigr)\leq r'$. Since $\gamma$ lies in the $M_1$--neighborhood of $\alpha$, there is $w$ in some $\alpha_n=\bigl[\Phi(g_{n-1}), \Phi(g_n)\bigr]$ such that $d_X(z,w)\leq M_1$. Thus, $d_X\bigl(\Phi(v),w\bigr)\leq M_1+r'$.

If $g_{n-1}$ and $g_n$ lie in some peripheral left coset $gP$, then $\Phi(g_{n-1})$ and $\Phi(g_{n})$ lie in $Y_{gP}$. This implies $w$ lies in some $M_2$--neighborhood of $Y_{gP}$ by the uniform neighborhood quasiconvexity of $Y_{gP}$. Thus, there is $g'$ in $gP$ such that $d_X\bigl(w,\Phi(g')\bigr)\leq M_2$, then $d_X\bigl(\Phi(v),\Phi(g')\bigr)\leq M_1+r'+M_2$. Thus, $d_S(v,g')\leq K(M_1+r'+M_2)+K$ by the inequality \eqref{eq:qi}. This implies that \[d_S(u,g')\leq d_S(v,g')+ d_S(u,v) \leq K(M_1+r'+M_2)+K+A'.\] Also if $g_n$ and $g'$ lie in the same peripheral left coset $gP$, then $d(g_n,g')\leq 1$. Therefore, \[d(u,g_n)\leq d(u,g')+d(g',g_n)\leq d_S(u,g')+d(g',g_n)\leq K(M_1+r'+M_2)+K+A'+1.\]

If there is no peripheral left coset that contains both $g_{n-1}$ and $g_n$, then $g_{n-1}$ and $g_n$ are two vertices of an $S$--edge of $\pi$. Since $d_S(g_{n-1},g_n)=1$, then $d_X\bigl(\Phi(g_{n-1}),\Phi(g_n)\bigr)\leq K$. This implies $d_X\bigl(w,\Phi(g_n)\bigr)\leq K$. Thus, \[d_X\bigl(\Phi(v),\Phi(g_n)\bigr)\leq d_X\bigl(w,\Phi(g_n)\bigr)+d_X\bigl(\Phi(v),w\bigr)\leq K+r'+M_1.\] This implies that $d_S(v,g_n)\leq K(K+r'+M_1)+K$ by the inequality \eqref{eq:qi}. Thus, \[d_S(u,g_n)\leq d_S(v,g_n)+d_S(v,u)\leq K(K+r'+M_1)+K+A'.\] This implies that \[d(u,g_n)\leq K(K+r'+M_1)+K+A'.\] Therefore, $\hat{c}$ lies in a bounded neighborhood of $\pi$ with respect to $d$.
Since $\hat{c}$ is a geodesic ray, then $\pi$ also lies in a bounded neighborhood of $\hat{c}$.
Therefore, in both cases, the Hausdorff distance between $\pi$ and $\hat{c}$ with respect to $d$ is finite, or $\pi$ and $\hat{c}$ are equivalent geodesic rays in $\hat{\Gamma}(G,S,\PP)$.
\end{proof}

\begin{lem} \label{l11}
Let $\alpha$ and $\alpha'$ be two geodesic rays in $X$ with the same initial point $\Phi(h_0)$, where $h_0\in G$. Let $(c,\hat{c})$ and $(c',\hat{c}')$ in $\hat{\Gamma}(G,S,\PP)$ be $(\epsilon, A, r)$--nice pairs of $\alpha$ and $\alpha'$ respectively. Suppose that the initial points of all $c,\hat{c},c',\hat{c}'$ are $h_0$ and $\hat{c}$, $\hat{c}'$ are two equivalent geodesic rays in $\hat{\Gamma}(G,S,\PP)$. Then $\alpha,\alpha'$ are two equivalent geodesic rays in $X$.
\end{lem}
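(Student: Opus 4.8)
The plan is to show $\alpha$ and $\alpha'$ have finite Hausdorff distance in $X$ by bounding, for each $x\in\alpha$, its $d_X$-distance to $\alpha'$; the reverse bound follows by the same argument with the roles reversed. First I would record the basic correspondence between the two coned-off rays. Since $\hat c$ and $\hat c'$ are equivalent geodesic rays in $\hat\Gamma(G,S,\PP)$ with the common initial point $h_0$, Lemma~\ref{l8} supplies a constant $a$ for which the sequences $(g_n)$ and $(g'_n)$ of all $G$-vertices of $\hat c$ and $\hat c'$ have $d_S$-Hausdorff distance at most $a$. Combining this with \eqref{eq:qi}, with the nice-pair bound that every $G$-vertex of $\hat c$ lies within $d_S$-distance $A$ of $c$, and with $d_H(\Phi(c),\alpha)\le r$ (and the primed analogues), I obtain a single constant $D_0$ so that the $\Phi$-image of every $G$-vertex of $\hat c$ lies within $D_0$ of both $\alpha$ and $\alpha'$, and symmetrically for $\hat c'$.

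Next I would fix $x\in\alpha$ and choose $v\in c$ with $d_X(x,\Phi(v))\le r$. Covering $c$ by the sets $c_e$, one for each edge $e$ of $\hat c$, exactly as in the proof of Lemma~\ref{l5}, the point $v$ falls into one of two cases. If $e$ is an $S$-edge, then $v$ lies within bounded $d_S$-distance of an incident $G$-vertex $g_n$ of $\hat c$; then $d_X(x,\Phi(v))\le r$, the quasi-isometry bound, and the previous paragraph place $x$ within a uniformly bounded $d_X$-distance of $\alpha'$. This ``shallow'' case is immediate.

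The substantive case is when $e$ is a peripheral edge at some $v_{gP}$, so that $v$ lies deep in $N_{A_1}(gP)$ and need not be within bounded $d_S$-distance of any $G$-vertex of $\hat c$. Let $g_n,g_{n+1}\in gP$ be the $G$-vertices of $\hat c$ bounding this excursion; by Lemma~\ref{l8} there are $G$-vertices $g'_k,g'_m$ of $\hat c'$ with $d_S(g_n,g'_k)\le a$ and $d_S(g_{n+1},g'_m)\le a$. The plan is to transfer the comparison onto the genuine geodesics $\alpha,\alpha'$ and invoke $\CAT(0)$ convexity rather than trying to fellow-travel the quasigeodesics themselves. Concretely, I would take nearest-point projections $a_n,a_{n+1}\in\alpha$ of $\Phi(g_n),\Phi(g_{n+1})$ and $a'_k,a'_m\in\alpha'$ of $\Phi(g'_k),\Phi(g'_m)$; the $d_S$-proximity of the matched $G$-vertices together with \eqref{eq:qi} forces $d_X(a_n,a'_k)$ and $d_X(a_{n+1},a'_m)$ to be uniformly bounded. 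Because $\alpha$ and $\alpha'$ are honest geodesics, Lemma~\ref{l6} applied to the subsegments $[a_n,a_{n+1}]\subset\alpha$ and $[a'_k,a'_m]\subset\alpha'$ shows these two geodesic segments lie in a uniformly bounded neighborhood of one another. It then remains to check that $x$ lies near $[a_n,a_{n+1}]$: the excursion is a connected subpath of $c$ whose image lies in $N_r(\alpha)$ with endpoints projecting near $a_n$ and $a_{n+1}$, so the $1$-Lipschitz nearest-point projection to $\alpha$ carries it into the subinterval between $a_n$ and $a_{n+1}$ up to bounded error; hence $x$ is within a bounded distance of $[a_n,a_{n+1}]$, and therefore of $\alpha'$.

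I expect the peripheral-excursion case to be the main obstacle, precisely because one cannot compare $c$ and $c'$ (or $\Phi(c)$ and $\Phi(c')$) directly: their $d_S$-Hausdorff distance may be infinite when they penetrate $gP$ to different depths, and the lack of hyperbolicity in $X$ rules out any Morse-type stability of quasigeodesics. The resolution is to route the comparison through the underlying geodesics $\alpha,\alpha'$, whose excursion endpoints are pinned together by Lemma~\ref{l8} and \eqref{eq:qi} and whose interiors are then controlled by the $\CAT(0)$ convexity of Lemma~\ref{l6}; the care lies in matching excursions and verifying the projection stays in the correct subsegment. Once both inclusions $\alpha\subset N_D(\alpha')$ and $\alpha'\subset N_D(\alpha)$ are obtained by this symmetric argument, $\alpha$ and $\alpha'$ have finite Hausdorff distance and are therefore equivalent geodesic rays in $X$.
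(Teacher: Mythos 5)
Your proposal is correct and follows essentially the same route as the paper: match the $G$--vertices of $\hat c$ and $\hat c'$ via Lemma~\ref{l8}, transfer them to nearby points on $\alpha$ and $\alpha'$ using \eqref{eq:qi} and the nice-pair bounds, and fill in the peripheral excursions by $\CAT(0)$ convexity (Lemma~\ref{l6}). The paper avoids your case analysis and nearest-point-projection step entirely by observing that every $x\in\alpha$ lies on the subsegment between two consecutive marked points of $\alpha$, whose endpoints are already boundedly close to $\alpha'$, so convexity applies directly.
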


\begin{proof}
Let $(g_n)$ and $(g'_n)$ be the sequences of all $G$--vertices of $\hat{c}$ and $\hat{c}'$ respectively. Let $(x_n)$ and $(x'_n)$ be the sequences of vertices of $c$ and $c'$ respectively such that $d_S(g_n,x_n)\leq A$ and $d_S(g'_n,x'_n)\leq A$.

By Lemma \ref{l8}, there is a constant $v$ such that the Hausdorff distance between $(g_n)$ and $(g'_n)$ is at most $v$ with respect to the metric $d_S$. This implies that the Hausdorff distance between $(x_n)$ and $(x'_n)$ is at most $v+2A$ with respect to the metric $d_S$. Thus, the Hausdorff distance between $\Phi(x_n)$ and $\Phi(x'_n)$ is at most $K(v+2A)$ by the inequality \eqref{eq:qi}.

Let $(w_n)$ and $(w'_n)$ be the sequences of vertices of $\alpha$ and $\alpha'$ respectively such that $d_X\bigl(w_n,\Phi(x_n)\bigr)\leq r$ and $d_X\bigl(w'_n,\Phi(x'_n)\bigr)\leq r$. This implies that the Hausdorff distance between $(w_n)$ and $(w'_n)$ is at most $K(v+2A)+2r$. 
Also \begin{align*} d_X\bigl(\Phi(h_0),w_n\bigr)&\geq d_X\bigl(\Phi(h_0),\Phi(x_n)\bigr)-d_X\bigl(w_n,\Phi(x_n)\bigr)\\&\geq \frac {1} {K} d_S(h_0,x_n)-1-r \\&\geq \frac {1} {K}\bigl(d_S(h_0,g_n)-d_S(g_n,x_n)\bigr)-1-r\\&\geq \frac {1}{K}\bigl(d(h_0,g_n)-A\bigr)-1-r \rightarrow \infty \end{align*}
Similarly, $d_X\bigl(\Phi(h'_0),w'_n\bigr) \rightarrow \infty$.

For each $x$ in $\alpha$, $x$ must lie in [$w_{n-1},w_n$] for some $n$. Since $w_{n-1},w_n$ lie in the $\bigl(K(v+2A)+2r\bigr)$--neighborhood of $\alpha'$, then $x$ also lies in the $\bigl(K(v+2A)+2r\bigr)$--neighborhood of $\alpha'$ by Lemma \ref{l6}. Thus, $\alpha$ lies in the $\bigl(K(v+2A)+2r\bigr)$--neighborhood of $\alpha'$. Similarly, $\alpha'$ lies in the $\bigl(K(v+2A)+2r\bigr)$--neighborhood of $\alpha$.
Therefore, $\alpha$, $\alpha'$ are two equivalent rays in $X$.
\end{proof}

\begin{lem} \label{l13}
Let $\alpha$ be a geodesic ray in $X$ such that $[\alpha]\notin \partial Y_{gP}$ for any $gP$ and $(c,\hat{c})$ be an $(\epsilon,A,r)$--nice pair of $\alpha$. Then, for each $g\in G$, $(gc,g\hat{c})$ is an $(\epsilon,A,r)$--nice pair of $g\alpha$.
\end{lem}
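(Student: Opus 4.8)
The plan is to reduce everything to the single observation that the constructions defining a nice pair are all \emph{natural} with respect to the $G$--action, so that applying the isometry $g$ to $(c,\hat c,\alpha)$ transports each defining condition verbatim. Recall the three relevant actions: $G$ acts by isometries on $\Gamma(G,S)$ (with respect to $d_S$), on $\hat\Gamma(G,S,\PP)$ (with respect to $d$), and on $X$ (with respect to $d_X$), and the map $\Phi$ is $G$--equivariant, i.e.\ $\Phi(g\cdot u)=g\cdot\Phi(u)$ for all $g\in G$ and $u\in\Gamma(G,S)$. I would state these at the outset and then check, one at a time, the four conditions in the definition of an $(\epsilon,A,r)$--nice pair of $g\alpha$.

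First I would verify the structural conditions. Since left multiplication by $g$ is a $d_S$--isometry and the $(L,C)$--quasigeodesic condition refers only to lengths of subpaths and distances between their endpoints, $gc$ is again an $\epsilon$--quasigeodesic ray; likewise $g\hat c$ is a geodesic ray in $\hat\Gamma(G,S,\PP)$ because $g$ is a $d$--isometry, and $g\alpha$ is a geodesic ray because $g$ is a $d_X$--isometry. Next I would treat the neighborhood condition on $G$--vertices. The key point here is that left multiplication permutes the $G$--vertices among themselves, sending a group element $h$ to $gh$, while it sends a peripheral vertex $v_{g'P'}$ to the peripheral vertex $v_{gg'P'}$; hence the $G$--vertices of $g\hat c$ are exactly the $g$--translates of the $G$--vertices of $\hat c$. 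If $v$ is a $G$--vertex of $\hat c$ with a witness $w\in c$ satisfying $d_S(v,w)\leq A$, then $d_S(gv,gw)=d_S(v,w)\leq A$ and $gw\in gc$, so every $G$--vertex of $g\hat c$ lies in the $A$--neighborhood of $gc$.

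Finally I would handle the Hausdorff--distance condition. By $G$--equivariance of $\Phi$ we have $\Phi(gc)=g\,\Phi(c)$, and since $g$ acts on $X$ by an isometry, the Hausdorff distance between $\Phi(gc)=g\,\Phi(c)$ and $g\alpha$ equals the Hausdorff distance between $\Phi(c)$ and $\alpha$, which is at most $r$ by hypothesis. Combining the three steps gives that $(gc,g\hat c)$ is an $(\epsilon,A,r)$--nice pair of $g\alpha$, as required. I do not expect a genuine obstacle in this argument, as it is entirely a matter of equivariance; the only point deserving care is the bookkeeping in the neighborhood step, namely that left multiplication carries $G$--vertices to $G$--vertices (and peripheral vertices to peripheral vertices), so that the set of $G$--vertices of $g\hat c$ is identified correctly with the $g$--translates of the $G$--vertices of $\hat c$. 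I would note in passing that the hypothesis $[\alpha]\notin\partial Y_{gP}$ is not actually needed to establish the stated conclusion; it is inherited by $g\alpha$ via the $G$--invariance of the set of peripheral limit points and is retained only to match the context in which the lemma is applied.
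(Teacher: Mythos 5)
Your proposal is correct and follows exactly the argument the paper gives: the paper's entire proof is that the statement is obvious because $G$ acts isometrically on $X$, $\hat{\Gamma}(G,S,\PP)$, and $\Gamma(G,S)$, and $\Phi$ is $G$--equivariant, which is precisely the observation you expand condition by condition. Your remark that the hypothesis $[\alpha]\notin\partial Y_{gP}$ is not needed for the conclusion is also accurate.
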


\begin{proof}
It is obvious since $G$ acts isometrically on $X$, $\hat{\Gamma}(G,S,\PP)$, $\Gamma(G,S)$ and the map $\Phi$ is $G$--equivariant.
\end{proof}

The proof for the following lemma is obvious and we leave it to the reader. 

\begin{lem}\label{l101}
Let $\alpha$ be a geodesic ray in $X$ such that $[\alpha]\in \partial Y_{gP}$ for some $gP$ and h be any group element in $G$. Then $h[\alpha]\in \partial Y_{hgP}$.
\end{lem}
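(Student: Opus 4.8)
The plan is to transport the defining neighborhood of $[\alpha]$ under the isometry $h$ and then invoke the $G$--equivariance of $\Phi$. First I would unpack the hypothesis: the assertion $[\alpha]\in\partial Y_{gP}$ means precisely that there is some $R>0$ with $\alpha\subset N_R(Y_{gP})$ with respect to the metric $d_X$.

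Next, since $G$ acts properly and cocompactly on the $\CAT(0)$ space $X$, it acts by isometries; in particular $h$ is an isometry of $X$. Consequently $h\alpha$ is again a geodesic ray, and because $h$ preserves $d_X$ it carries metric neighborhoods to metric neighborhoods of the same radius, so $hN_R(Y_{gP})=N_R(hY_{gP})$. Applying $h$ to the inclusion above yields $h\alpha\subset N_R(hY_{gP})$.

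The only remaining point is the identification $hY_{gP}=Y_{hgP}$, which is where the equivariance of $\Phi$ enters: $hY_{gP}=h\Phi(gP)=\Phi(h\cdot gP)=\Phi\bigl((hg)P\bigr)=Y_{hgP}$. Combining, $h\alpha\subset N_R(Y_{hgP})$, so $[h\alpha]\in\partial Y_{hgP}$ by the definition of $\partial Y_{hgP}$. Finally, since the action of $G$ on $\partial X$ is the one induced by the isometric action on $X$, we have $h[\alpha]=[h\alpha]$, and hence $h[\alpha]\in\partial Y_{hgP}$, as required.

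I expect no genuine obstacle here: the argument uses only that $h$ acts isometrically (so that it sends geodesic rays to geodesic rays and preserves $R$--neighborhoods) together with the $G$--equivariance of $\Phi$ (so that the peripheral spaces $Y_{gP}$ are permuted in the same way as the peripheral left cosets $gP$). Both properties are part of the standing setup of Section~\ref{connection}, which is why the statement can reasonably be regarded as obvious.
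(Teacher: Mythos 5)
Your argument is correct and is exactly the routine verification the paper has in mind: the paper states only that ``the proof is obvious and left to the reader,'' and your three steps (unpacking $[\alpha]\in\partial Y_{gP}$ as $\alpha\subset N_R(Y_{gP})$, using that $h$ acts by isometries to get $h\alpha\subset N_R(hY_{gP})$, and using $G$--equivariance of $\Phi$ to identify $hY_{gP}=Y_{hgP}$) supply precisely the intended details. No issues.
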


\section{Main Theorem} \label{Main Theorem}

In this section, we will show the connection between the $\CAT(0)$ boundary of $X$ and the relative boundary of a relatively hyperbolic group $G$ that acts on X properly and cocompactly.

Now, we build the map $f\colon \partial X\to \Delta_{\infty}\hat{\Gamma}$ between the $\CAT(0)$ boundary of $X$ and the infinite hyperbolic closure $\Delta_{\infty}\hat{\Gamma}$ of $\hat{\Gamma}(G,S,\PP)$ as follows:

Let $[\alpha]$ be a point in $\partial X$. If $[\alpha]\in \bigcup_{gP\in\Pi} \partial Y_{gP}$, then there is a unique peripheral left coset $g_0P_0$ such that $[\alpha]\in \partial Y_{g_0P_0}$. We define $f\bigl([\alpha]\bigr)=v_{g_0P_0}$. If $[\alpha]\notin \bigcup_{gP\in\Pi} \partial Y_{gP}$, then there is a nice pair $(c,\hat{c})$ in $\hat{\Gamma}(G,S,\PP)$ such that the Hausdorff distance between $\Phi(c)$ and $\alpha$ is finite. We define $f\bigl([\alpha]\bigr)=[\hat{c}]$.

\begin{lem}
The map $f$ is well-defined.
\end{lem}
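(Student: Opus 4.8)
The plan is to treat separately the two pieces into which $\partial X$ is split by the definition, observing first that the alternative ``$[\alpha]\in\bigcup_{gP}\partial Y_{gP}$ or not'' depends only on the equivalence class $[\alpha]$, since each $\partial Y_{gP}$ is by construction a subset of $\partial X$ rather than of the set of rays. In the first case, where $[\alpha]\in\bigcup_{gP}\partial Y_{gP}$, the only points to check are that the coset $g_0P_0$ with $[\alpha]\in\partial Y_{g_0P_0}$ is unique and that $v_{g_0P_0}$ really belongs to $\Delta_\infty\hat\Gamma$. Uniqueness is immediate from Lemma~\ref{l31}, which gives $\partial Y_{gP}\cap\partial Y_{g'P'}=\emptyset$ for $gP\neq g'P'$; and since $\partial Y_{g_0P_0}$ is non-empty, the same lemma forces $g_0P_0$ to be infinite, so $v_{g_0P_0}\in V_\infty\subset\Delta_\infty\hat\Gamma$. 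Thus the first case requires no further argument.

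All the content is in the second case. When $[\alpha]\notin\bigcup_{gP}\partial Y_{gP}$ we must show that the class $[\hat c]\in\partial\hat\Gamma\subset\Delta_\infty\hat\Gamma$ is insensitive both to the choice of representative ray $\alpha$ and to the choice of nice pair $(c,\hat c)$ of $\alpha$. I would first fold the representative-independence into the pair-independence: if $[\alpha']=[\alpha]$ and $(c',\hat c')$ is a nice pair of $\alpha'$, then the Hausdorff distance between $\alpha$ and $\alpha'$ in $X$ is finite, hence so is that between $\Phi(c')$ and $\alpha$, and therefore $(c',\hat c')$ is itself a nice pair of $\alpha$. So it suffices to prove the single assertion that \emph{any two nice pairs $(c,\hat c)$ and $(c',\hat c')$ of one fixed ray $\alpha$ (with $[\alpha]\notin\bigcup_{gP}\partial Y_{gP}$) have $\hat c$ and $\hat c'$ equivalent in $\hat\Gamma$.}

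To establish this I would apply Lemma~\ref{l9} to the geodesic ray $\pi:=\hat c$, obtaining a geodesic ray $\gamma$ in $X$ with $[\gamma]\notin\partial Y_{gP}$ for every $gP$ and, crucially, with the universal property that the coned-off component of \emph{every} nice pair of $\gamma$ is equivalent to $\hat c$. The key step is then to identify $\gamma$ with $\alpha$ up to equivalence. Writing $(g_n)$ for the $G$--vertices of $\hat c$, the nice-pair condition puts each $g_n$ in the $A$--neighborhood of $c$ with respect to $d_S$, so by inequality~\eqref{eq:qi} each $\Phi(g_n)$ lies in a uniform neighborhood of $\Phi(c)$, and hence in $N_{KA+r}(\alpha)$ because $\Phi(c)$ has finite Hausdorff distance $r$ from $\alpha$. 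Since $\gamma$ arises as a limit of geodesic segments $[\Phi(g_0),\Phi(g_n)]$ whose endpoints lie in $N_{KA+r}(\alpha)$, Lemma~\ref{l6} confines each such segment, and therefore the limit $\gamma$, to $N_{KA+r}(\alpha)$; as a geodesic ray contained in a bounded neighborhood of another geodesic ray in a $\CAT(0)$ space is equivalent to it, we get $\gamma\sim\alpha$. The second pair $(c',\hat c')$ is then a nice pair of $\alpha$, hence of $\gamma$, and Lemma~\ref{l9} gives $\hat c'\sim\hat c$, which is what we wanted.

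I expect the identification $\gamma\sim\alpha$ to be the main obstacle: one has to check that the limiting ray $\gamma$, which Lemma~\ref{l9} builds purely from the combinatorics of $\hat c$, genuinely fellow-travels the geodesic ray $\alpha$ in $X$. The virtue of routing the comparison of two arbitrary nice pairs through the ``for every nice pair of $\gamma$'' clause of Lemma~\ref{l9} is precisely that it spares us from having to prove a saturation-type fellow-traveling estimate for an arbitrary nice pair directly, which would be considerably more delicate.
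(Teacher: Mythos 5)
Your argument is correct, but in the main (non-peripheral) case it takes a genuinely different route from the paper's. The paper's proof is a direct Hausdorff-distance chase entirely inside the graphs: from $[\alpha_1]=[\alpha_2]$ it gets $d_H\bigl(\Phi(c_1),\Phi(c_2)\bigr)<\infty$, pulls this back through the quasi-isometry \eqref{eq:qi} to get $d_H(c_1,c_2)<\infty$ in $d_S$, hence in $d$ (as $d\le d_S$), and then invokes the remark that the two halves of a nice pair $(c_i,\hat c_i)$ are at finite Hausdorff distance in $d$ to conclude $[\hat c_1]=[\hat c_2]$. You instead reduce to two nice pairs of a single ray $\alpha$ and route the comparison through Lemma~\ref{l9}: build the auxiliary ray $\gamma$ from $\pi=\hat c$, show $\gamma\sim\alpha$ via Lemma~\ref{l6} and the standard $\CAT(0)$ fact that a ray trapped in a bounded neighborhood of another ray is equivalent to it, and then let the ``every nice pair of $\gamma$'' clause do the rest. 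Both are sound. The paper's version buys brevity and independence from Lemma~\ref{l9}; yours buys that it does not lean on the (unproved) remark that $c$ and $\hat c$ are at finite Hausdorff distance in $d$, needing only the one-sided containment of the $G$--vertices of $\hat c$ in $N_A(c)$, which is part of the definition of a nice pair. The one caveat is that your step ``$\gamma$ arises as a limit of the segments $[\Phi(g_0),\Phi(g_n)]$'' uses the construction inside the \emph{proof} of Lemma~\ref{l9}, not its statement --- the statement alone gives no relation between $\gamma$ and $\alpha$ --- so you should cite that construction explicitly rather than the lemma as a black box. Your preliminary observations (that the case split depends only on $[\alpha]$, that uniqueness of $g_0P_0$ is Lemma~\ref{l31}, and that $v_{g_0P_0}$ lies in $V_{\infty}(\hat\Gamma)$ because $\partial Y_{g_0P_0}\ne\emptyset$ forces $g_0P_0$ to be infinite) match the paper's first case, with the last check being a detail the paper omits.
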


\begin{proof}
Suppose [$\alpha_1$]= [$\alpha_2$] in $\partial X$, where $\alpha_1$ and $\alpha_2$ are two geodesic rays in $X$.
If one of them belongs to $\bigcup_{gP\in\Pi} \partial Y_{gP}$\ then there is a unique peripheral left coset $g_0P_0$ such that $[\alpha_1] = [\alpha_2] \in \partial Y_{g_0P_0}$. Therefore, $f\bigl([\alpha_1]\bigr)= f\bigl([\alpha_2]\bigr) = v_{g_0P_0}$. Suppose that [$\alpha_1$]= [$\alpha_2$] lies in $\partial X-\bigcup_{gP\in\Pi}\partial Y_{gP}$. Let $(c_1,\hat{c}_1)$ and $(c_2,\hat{c}_2)$ be two nice pairs of $\alpha_1$ and $\alpha_2$ respectively. Thus, the Hausdorff distances $d_H\bigl(\Phi(c_1),\alpha_1\bigr)$ and $d_H\bigl(\Phi(c_2),\alpha_2\bigr)$ are finite. This implies that the Hausdorff distance $d_H\bigl(\Phi(c_1),\Phi(c_2)\bigr)<\infty$. Thus, the Hausdorff distance between $c_1$ and $c_2$ is finite with respect to the metric $d_S$. This implies that the Hausdorff distance between $c_1$ and $c_2$ is also finite with respect to the metric $d$. Since the Hausdorff distance between $c_i$ and $\hat{c}_i$ is finite with respect to the metric $d$ for $i=1,2$, then the Hausdorff distance between $\hat{c}_1$ and $\hat{c}_2$ with respect to the metric $d$ is finite i.e $[\hat{c}_1]=[\hat{c}_2]$.
\end{proof}

\begin{lem}
The map $f$ maps the set $\partial X-\bigcup_{gP\in\Pi}\partial Y_{gP}$ onto $\partial \hat{\Gamma}$.
\end{lem}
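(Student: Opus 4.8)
The plan is to prove surjectivity directly, and the key observation is that essentially all of the substantive work has already been carried out in Lemma \ref{l9}. Let $[\pi]$ be an arbitrary point of $\partial\hat{\Gamma}$. First I would choose a geodesic ray $\pi$ in $\hat{\Gamma}(G,S,\PP)$ representing the class $[\pi]$; such a representative exists because $\hat{\Gamma}(G,S,\PP)$ is a fine hyperbolic graph, so every point of its Gromov boundary is the endpoint of a geodesic ray (one may even arrange $\pi$ to emanate from the vertex corresponding to the identity).

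Next I would apply Lemma \ref{l9} to $\pi$. This produces a geodesic ray $\gamma$ in $X$ with $[\gamma]\notin\partial Y_{gP}$ for every peripheral left coset $gP$; in other words $[\gamma]$ lies in the domain $\partial X - \bigcup_{gP\in\Pi}\partial Y_{gP}$. Because $[\gamma]$ is a non-peripheral limit point, the value $f\bigl([\gamma]\bigr)$ is computed by choosing a nice pair $(c,\hat{c})$ of $\gamma$---whose existence is guaranteed by Lemma \ref{l7}---and setting $f\bigl([\gamma]\bigr)=[\hat{c}]$.

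Finally I would invoke the second conclusion of Lemma \ref{l9}: for this (indeed for any) nice pair $(c,\hat{c})$ of $\gamma$, the rays $\hat{c}$ and $\pi$ have finite Hausdorff distance in $\hat{\Gamma}(G,S,\PP)$, hence $[\hat{c}]=[\pi]$. Therefore $f\bigl([\gamma]\bigr)=[\pi]$, and since $[\pi]\in\partial\hat{\Gamma}$ was arbitrary this shows that $f$ carries $\partial X - \bigcup_{gP\in\Pi}\partial Y_{gP}$ onto $\partial\hat{\Gamma}$.

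The argument is short precisely because the geometric content is packaged inside Lemma \ref{l9}, so I do not expect a genuine obstacle here. The only points that need care are the two existence facts I am leaning on: that each boundary point of $\hat{\Gamma}(G,S,\PP)$ admits a geodesic-ray representative (from the geometry of fine hyperbolic graphs) and that a non-peripheral ray $\gamma$ admits a nice pair (from Lemma \ref{l7}). One should also note that the image point $f\bigl([\gamma]\bigr)$ genuinely lands in $\partial\hat{\Gamma}$ rather than among the parabolic vertices $V_{\infty}\bigl(\hat{\Gamma}(G,S,\PP)\bigr)$, but this is automatic from the definition of $f$ on the non-peripheral part of $\partial X$.
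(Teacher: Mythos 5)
Your argument is correct and is exactly the paper's intended proof: the paper simply states that the lemma ``is guaranteed by the definition of $f$ and Lemma \ref{l9},'' and your write-up is the careful unpacking of that one-line justification (choose a geodesic ray representative of a boundary point, apply Lemma \ref{l9} to obtain a non-peripheral $[\gamma]$, and use the equivalence of $\hat{c}$ and $\pi$ to conclude $f([\gamma])=[\pi]$). No discrepancy with the paper's approach.
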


\begin{proof}
It is guaranteed by the definition of $f$ and Lemma \ref{l9}.
\end{proof}

\begin{lem}
The map $f$ maps the set $\bigcup_{gP\in\Pi}\partial Y_{gP}$ onto $V_{\infty}(\hat{\Gamma})$.
\end{lem}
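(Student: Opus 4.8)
The plan is to prove the set equality by establishing the two inclusions $f\bigl(\bigcup_{gP\in\Pi}\partial Y_{gP}\bigr)\subseteq V_{\infty}(\hat{\Gamma})$ and $V_{\infty}(\hat{\Gamma})\subseteq f\bigl(\bigcup_{gP\in\Pi}\partial Y_{gP}\bigr)$ separately.

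For the first inclusion I would argue directly from the definition of $f$. If $[\alpha]\in\bigcup_{gP\in\Pi}\partial Y_{gP}$, then by Lemma \ref{l31} the sets $\partial Y_{gP}$ are pairwise disjoint, so there is a unique peripheral left coset $g_0P_0$ with $[\alpha]\in\partial Y_{g_0P_0}$, and by definition $f\bigl([\alpha]\bigr)=v_{g_0P_0}$. Since $\partial Y_{g_0P_0}$ is non--empty, Lemma \ref{l31} forces $g_0P_0$ to be infinite, whence $v_{g_0P_0}\in V_{\infty}(\hat{\Gamma})$. This yields the containment of the image.

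For surjectivity onto $V_{\infty}(\hat{\Gamma})$, I would fix a parabolic point $v_{gP}\in V_{\infty}(\hat{\Gamma})$, so that $gP$ is infinite, and manufacture a geodesic ray in $X$ whose class lies in $\partial Y_{gP}$. Since $gP$ is infinite and the balls of $\Gamma(G,S)$ are finite, I can choose a sequence $(h_n)$ in $gP$ with $d_S(h_0,h_n)\to\infty$; by the inequality \eqref{eq:qi} the points $\Phi(h_n)\in Y_{gP}$ satisfy $d_X\bigl(\Phi(h_0),\Phi(h_n)\bigr)\to\infty$. Let $\gamma_n$ be the geodesic in $X$ from $\Phi(h_0)$ to $\Phi(h_n)$. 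Each $\gamma_n$ is a geodesic, hence a quasigeodesic segment, with endpoints in $Y_{gP}$, so by the uniform neighborhood quasiconvexity of the collection $(Y_{gP})_{gP\in\Pi}$ there is a constant $D$, independent of $n$, with $\gamma_n\subset N_D(Y_{gP})$. Passing to a subsequence converging to a geodesic ray $\alpha$ emanating from $\Phi(h_0)$ (exactly as in the proof of Lemma \ref{l9}, using that the geometric action of $G$ makes $X$ proper), continuity of the distance function gives $\alpha\subset N_D(Y_{gP})$ as well, so $[\alpha]\in\partial Y_{gP}$. By the disjointness in Lemma \ref{l31}, $gP$ is the unique peripheral left coset with $[\alpha]\in\partial Y_{gP}$, and therefore $f\bigl([\alpha]\bigr)=v_{gP}$.

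The set--theoretic bookkeeping and the quasi-isometry estimate are routine. The only genuine step is the construction of the limiting ray $\alpha$, and I expect the main point to be the fact that the uniform neighborhood quasiconvexity constant $D$ is independent of $n$, so that the containment $\gamma_n\subset N_D(Y_{gP})$ survives passage to the limit; this, together with the existence of a convergent subsequence of the $\gamma_n$ guaranteed by the properness of $X$, is what produces a geodesic ray staying in a fixed neighborhood of $Y_{gP}$ and hence defining a peripheral limit point of the required type.
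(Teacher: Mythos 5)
Your proof is correct and follows the same route as the paper: the forward inclusion is immediate from the definition of $f$, and surjectivity reduces to the fact that $\partial Y_{gP}\neq\emptyset$ whenever $gP$ is infinite, which the paper simply cites from Lemma \ref{l31} as ``obvious.'' The limiting-ray construction you give (geodesic segments $\gamma_n$ trapped in $N_D(Y_{gP})$ by uniform neighborhood quasiconvexity, then an Arzel\`a--Ascoli subsequence using properness of $X$) is exactly the standard argument the paper is implicitly relying on, so you have merely filled in the detail it omits.
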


\begin{proof}
We have $f\bigl(\bigcup_{gP\in\Pi}\partial Y_{gP}\bigr)\subseteq V_{\infty}(\hat{\Gamma})$ by construction.
For each $v_{gP}\in V_{\infty}(\hat{\Gamma})$, since $gP$ is infinite, we could choose $[\alpha]\in \partial Y_{gP}$ and we have $f\bigl([\alpha]\bigr) = v_{gP}$.
Therefore, $f\bigl(\bigcup_{gP\in\Pi}\partial Y_{gP}\bigr)\supset V_{\infty}(\hat{\Gamma})$.
\end{proof}

\begin{lem}
The map $f$ is injective in $\partial X-\bigcup_{gP\in\Pi}\partial Y_{gP}$.
\end{lem}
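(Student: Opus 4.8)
The plan is to reduce the claim directly to Lemma \ref{l11}, whose hypotheses demand two $(\epsilon,A,r)$--nice pairs sharing a common initial group vertex, together with the equivalence of their geodesic--ray components in $\hat{\Gamma}(G,S,\PP)$. The whole argument is then a matter of choosing the right representatives and quoting Lemma \ref{l7} and Lemma \ref{l11}, with the already--established well--definedness of $f$ supplying the bridge between the hypothesis $f([\alpha_1])=f([\alpha_2])$ and the equivalence of the coned--off rays.

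First I would fix a group element $h_0\in G$ and invoke the $\CAT(0)$ uniqueness of rays recorded after Definition \ref{CAT(0)}: for any prescribed point and any boundary class there is a geodesic ray from that point in that class. Thus, given two points $[\alpha_1],[\alpha_2]\in\partial X-\bigcup_{gP\in\Pi}\partial Y_{gP}$ with $f([\alpha_1])=f([\alpha_2])$, I choose the representatives $\alpha_1,\alpha_2$ to be the geodesic rays emanating from the common initial point $\Phi(h_0)$. Since $[\alpha_i]\notin\partial Y_{gP}$ for any $gP$ and $\alpha_i$ starts at $\Phi(h_0)$, Lemma \ref{l7} produces an $(\epsilon,A,r)$--nice pair $(c_i,\hat{c}_i)$ of $\alpha_i$ whose components all have initial point $h_0$, for $i=1,2$. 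Crucially, the constants $\epsilon,A,r$ furnished by Lemma \ref{l7} are uniform, so both pairs may be taken $(\epsilon,A,r)$--nice for the \emph{same} constants, which is exactly what Lemma \ref{l11} asks for.

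By well--definedness of $f$ the value $f([\alpha_i])$ equals $[\hat{c}_i]$ for these particular nice pairs, so the hypothesis $f([\alpha_1])=f([\alpha_2])$ becomes $[\hat{c}_1]=[\hat{c}_2]$; that is, $\hat{c}_1$ and $\hat{c}_2$ are equivalent geodesic rays in $\hat{\Gamma}(G,S,\PP)$. Now $\alpha_1$ and $\alpha_2$ share the initial point $\Phi(h_0)$, the four paths $c_1,\hat{c}_1,c_2,\hat{c}_2$ all start at $h_0$, and $\hat{c}_1$ is equivalent to $\hat{c}_2$, so Lemma \ref{l11} applies and yields that $\alpha_1,\alpha_2$ are equivalent geodesic rays in $X$, i.e.\ $[\alpha_1]=[\alpha_2]$. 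This is the asserted injectivity.

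The one point needing care---and the only real obstacle---is the base--point bookkeeping: one must align everything at a single initial vertex $\Phi(h_0)$ with $h_0\in G$, both so that the $\CAT(0)$ uniqueness of rays can be used to pick canonical representatives and so that the nice pairs begin at the matching group vertex $h_0$, since Lemma \ref{l11} is stated only for nice pairs with a shared initial point. Once this alignment is in place, the statement is immediate from Lemma \ref{l7}, Lemma \ref{l11}, and the well--definedness of $f$.
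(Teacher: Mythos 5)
Your proposal is correct and takes essentially the same route as the paper, which simply cites Lemma \ref{l11}; you have filled in exactly the intended bookkeeping (basing both rays at a common $\Phi(h_0)$, invoking Lemma \ref{l7} for nice pairs with uniform constants, and using well-definedness of $f$ to translate $f([\alpha_1])=f([\alpha_2])$ into equivalence of $\hat{c}_1$ and $\hat{c}_2$).
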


\begin{proof}
It is guaranteed by Lemma \ref{l11}.
\end{proof}

\begin{lem}
The map $f$ is $G$--equivariant. 
\end{lem}

\begin{proof}
It is guaranteed by Lemma \ref{l13}, Lemma \ref{l101} and the fact that $hv_{gP}=v_{hgP}$ for any peripheral vertex $v_{gP}$ and any group element $h\in G$.
\end{proof}

\begin{rem}
We are now going to show the continuity of the function $f$. This is the most technical part in this paper. Our strategy is to prove first that $f$ is continuous at each non-peripheral limit point and later at each peripheral limit point. Before starting the proof, we need to fix some constants and some notation for convenience:

Let $\epsilon,r$ be the numbers in Lemma \ref{l7} and Lemma \ref{l4}. 

Let $A_1$ and $B_1$ be the numbers in Lemma \ref{l7}, let $A_2$ and $B_2$ be the numbers in Lemma \ref{l15} (We note that the numbers $A_2$ and $B_2$ in Lemma \ref{l15} depend on the number $\epsilon$ we have just chosen above.) Let $A=\max\{A_1,A_2\}$ and $B=\max\{B_1,B_2\}$.  

Let $K$ be the constant in the inequality \eqref{eq:qi}.

For any $x\in X$ and $\xi \in \partial X$ we denote $[x,\xi)$ to be the unique geodesic from $x$ to $\xi$. 
\end{rem}

We are now ready to prove the continuity of $f$ at each non-peripheral limit point.

\begin{prop} \label{loc1}
The map $f$ is continuous at each non-peripheral limit point.
\end{prop}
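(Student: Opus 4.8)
The plan is to prove continuity of $f$ at a non-peripheral limit point $\xi=[\alpha]$ by working directly with the basis of neighborhoods of $f(\xi)=[\hat{c}]$ in $\Delta_{\infty}\hat{\Gamma}$ and pulling them back to neighborhoods of $\xi$ in $\partial X$. Recall that a basic neighborhood of $[\hat{c}]\in\partial\hat{\Gamma}\subset\Delta_{\infty}\hat{\Gamma}$ has the form $M_{(\lambda,c)}([\hat{c}],A')$, consisting of those $b\in\Delta\hat{\Gamma}$ reachable from $[\hat{c}]$ by a $(\lambda,c)$--quasigeodesic arc avoiding the finite vertex set $A'$. So fixing such a neighborhood $U$ of $f(\xi)$, I must produce a basic neighborhood $U(x_0,\xi,R,\delta)$ of $\xi$ in $\partial X$ (in the notation of Definition \ref{CAT(0)}, with basepoint $x_0=\Phi(h_0)$) whose image under $f$ lands in $U$.

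First I would fix an $(\epsilon,A,r)$--nice pair $(c,\hat{c})$ of $\alpha$ with all initial points equal to $h_0$, guaranteed by Lemma \ref{l7}, so that $f(\xi)=[\hat{c}]$ and $c$ lies in the $B$--neighborhood of $\Sat_0(\hat{c})$. The key idea is that the finite obstruction set $A'$ lives at bounded distance from $h_0$ in $\hat\Gamma$; using fineness and the quasi-isometry inequality \eqref{eq:qi}, there is a radius $R_0$ in $X$ so that any geodesic ray from $\Phi(h_0)$ agreeing with $\alpha$ up to a large radius $R$ produces (via Lemma \ref{l4} and Lemma \ref{l7}) a nice pair $(c',\hat{c}')$ whose associated geodesic ray $\hat{c}'$ shares a long enough initial segment with $\hat{c}$ — long enough to pass beyond the finite set $A'$ before diverging. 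Concretely, if $\xi'=[\alpha']$ is close enough to $\xi$ that $d_X(\alpha_{x_0,\xi}(R),\alpha_{x_0,\xi'}(R))\le\delta$ for suitable large $R$ and small $\delta$, then Lemma \ref{l6} forces $\alpha'$ to fellow-travel $\alpha$ out to a large radius, hence $\Phi(c')$ fellow-travels $\Phi(c)$, hence $c'$ and $c$ stay close in $d_S$ (and thus in $d$) out to a controlled distance, hence $\hat{c}$ and $\hat{c}'$ have $G$--vertices matching up to bounded $d$--distance on a long prefix. This yields a $(\lambda,c)$--quasigeodesic arc in $\hat\Gamma$ from $[\hat{c}]$ to $f(\xi')$ avoiding $A'$, placing $f(\xi')$ in $U$.

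The case distinction I must handle is whether $\xi'$ is itself a peripheral limit point: if $\xi'\in\partial Y_{g'P'}$ then $f(\xi')=v_{g'P'}$, and I need the long fellow-traveling to guarantee $v_{g'P'}$ is reachable from $[\hat{c}]$ by a short-enough quasigeodesic missing $A'$ — here Lemma \ref{l15} provides the relevant nice pair ending at the peripheral vertex and the control on saturations. If $\xi'$ is non-peripheral, Lemma \ref{l9} and the well-definedness of $f$ identify $f(\xi')$ with the class of the geodesic ray built from any nice pair of $\alpha'$, and the fellow-traveling argument shows this ray stays $d$--close to $\hat{c}$ on the prefix beyond $A'$.

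The main obstacle I anticipate is the bookkeeping that converts ``fellow-traveling of $\alpha$ and $\alpha'$ out to radius $R$ in $X$'' into ``the geodesic rays $\hat{c}$ and $\hat{c}'$ in $\hat\Gamma$ agree, or stay boundedly close, on a prefix reaching past the finite set $A'$''. This is delicate precisely because, as the introduction warns, the topologies on $\partial X$ and $\Delta_{\infty}\hat\Gamma$ are built in genuinely different ways: closeness in $\partial X$ is measured by agreement of geodesics at a fixed radius, whereas closeness in $\Delta_{\infty}\hat\Gamma$ is measured by the existence of an avoiding quasigeodesic arc. Translating between them requires carefully tracking how the constants $A$, $B$, $r$, $K$ and the saturation neighborhoods accumulate, and ensuring the prefix length can be made arbitrarily large by shrinking $\delta$ and increasing $R$; the uniform neighborhood quasiconvexity and bounded penetration of $(Y_{gP})_{gP\in\Pi}$ are what keep these constants uniform and prevent the estimate from degenerating near peripheral directions.
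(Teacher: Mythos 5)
Your setup matches the paper's: fix an $(\epsilon,A,r)$--nice pair $(c,\hat c)$ of $\alpha$ based at $h_0$, pull back a basic neighborhood $M(\eta,D)$ of $\eta=[\hat c]$ to a neighborhood $U(\Phi(h_0),\xi,t,1)$ for large $t$, split into the cases $\xi'$ non-peripheral / peripheral, and use the saturation control (Lemmas \ref{l5}, \ref{l7}, \ref{l15}) to produce a $G$--vertex $g_1$ of $\hat c'$ at bounded $d$--distance from a far-out vertex $h^*$ of $\hat c$. Up to that point your outline is sound.

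The gap is in the sentence ``this yields a $(\lambda,c)$--quasigeodesic arc in $\hat\Gamma$ from $[\hat c]$ to $f(\xi')$ avoiding $A'$.'' Knowing that $\hat c$ and $\hat c'$ pass within $n_1$ of each other at $d$--distance $n_0$ from $h_0$ does not by itself produce such an arc. The natural candidate --- travel backwards along $\hat c$ from infinity to $h^*$, jump to $g_1$, then forward along $\hat c'$ to $f(\xi')$ --- is \emph{not} a quasigeodesic with uniform constants: when $\xi'$ is very close to $\xi$ the rays $\hat c$ and $\hat c'$ fellow-travel far beyond $h^*$, so this path contains subpaths of unbounded length between points at bounded distance (it backtracks), and the quasigeodesic constants degenerate exactly as $\xi'\to\xi$. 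You must instead control the honest geodesic $\pi$ between $\eta$ and $f(\xi')$, which amounts to a lower bound on the Gromov product $(\eta\cdot f(\xi'))_{h_0}$. The paper does this by contradiction: if $\pi$ met $D$, its closest point $z$ to $h_0$ would satisfy $d(h_0,z)\le\sigma$; splitting $\pi$ at $z$ and applying Lemma \ref{l14} to each half against $\hat c$ and $\hat c'$ yields points $z_1,z_1'$ within $R$ of $h^*$ and $g_1$, and the resulting triangle-inequality computation forces $d(g_1,h^*)\ge 2n_0-2\sigma-4R-n_1$, contradicting the saturation bound $d(g_1,h^*)\le n_1$ once $n_0>\sigma+2R+n_1$. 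This stability argument (or an equivalent Gromov-product estimate) is the missing step; without it your construction does not place $f(\xi')$ in $M_{(\lambda,c)}(\eta,A')$ for any fixed $(\lambda,c)$.
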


\begin{proof}
Let $\xi$ be an arbitrary point in $\partial X-\bigcup_{gP\in\Pi}\partial Y_{gP}$. Thus, $\eta=f(\xi)\in \partial \hat{\Gamma}$. Let $W$ be a neighborhood of $\eta$ in $\Delta_{\infty}\hat{\Gamma}$. We will show that there is a neighborhood $U$ of $\xi$ such that $f(U)\subset W$. Since $\Delta_{\infty}\hat{\Gamma}$ is a subspace of $\Delta\hat{\Gamma}$, then there is a finite subset $D$ of $V(\hat{\Gamma})$ such that $M(\eta,D)\cap\Delta_{\infty}\hat{\Gamma}\subset W$. Thus, it is sufficient to find a neighborhood $U$ of $\xi$ such that $f(U)\subset M(\eta,D)\cap\Delta_{\infty}\hat{\Gamma}$.

The idea here is to find $U$ of the form $U=U\bigl(\Phi(h_0),\xi,t,1\bigr)$ (see Definition \ref{CAT(0)}), where $h_0$ is some $G$--vertex and t is a large enough number such that $f(U)\subset M(\eta,D)\cap\Delta_{\infty}\hat{\Gamma}$. We construct $U$ as follows:

Let $\alpha= [\Phi(h_0),\xi)$ for some $G$--vertex $h_0$. Let $\sigma =\max\set{d(h_0,a)}{a\in D}$. Let $R=R(\delta,\sigma)$ be the constant in Lemma \ref{l14}, where $\delta$ is the hyperbolic constant of $\hat{\Gamma}(G,S,\PP)$.
Let \begin{align*} n_1&=K(KA+KB+2r+1)+K+1 \\ n_0&= \sigma+2R+n_1+1\end{align*} Let $(c,\hat{c}$) be an $(\epsilon, A,r)$--nice pair of $\alpha$ such that $c$ and $\hat{c}$ have the same initial point $h_0$. Thus, $f(\xi)=f\bigl([\alpha]\bigr)=[\hat{c}]$. Let $h^*$ be a $G$--vertex in $\hat{c}$ such that $d(h_0,h^*)\geq n_0$. Choose $y^*$ in $c$ such that $d_S(h^*,y^*)\leq A$. Choose a positive number $t$ such that $d_X\bigl(\Phi(y^*),\alpha(t)\bigr)\leq r$ and define $U=U\bigl(\Phi(h_0),\xi,t,1\bigr)$. 

We first observe that \begin{align*} d_X\bigl(\Phi(h^*),\alpha(t)\bigr)&\leq d_X\bigl(\Phi(h^*),\Phi(y^*)\bigr)+d_X\bigl(\Phi(y^*),\alpha(t)\bigr)\\&\leq Kd_S(h^*,y^*)+d_X\bigl(\Phi(y^*),\alpha(t)\bigr)\\&\leq KA+r.\end{align*}

We now try to prove $f(U)\subset M(\eta,D)\cap\Delta_{\infty}\hat{\Gamma}$. For each $\xi'$ in $U$, we denote $\alpha'= [\Phi(h_0),\xi')$. Thus, $d_X\bigl(\alpha'(t),\alpha(t)\bigr)\leq 1$. Let $\pi$ be a geodesic in $\hat{\Gamma}$ connecting $\hat{c}$ and $f(\xi')$. In order to show $f(\xi')$ is an element in $M(\eta,D)$, we need to prove $\pi \cap D =\emptyset$. In order to see this fact, we need to consider two cases: $\xi'$ is a non-peripheral limit point and $\xi'$ is a peripheral limit point. The following two lemmas will help us finish the proof of this proposition. 
\end{proof}

\begin{lem} \label{loc1a}
If $\xi'$ is a non-peripheral limit point, then $\pi \cap D =\emptyset$. 
\end{lem}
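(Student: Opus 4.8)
The plan is to establish the claim by a careful \emph{geodesic‐comparison} argument in $\hat{\Gamma}(G,S,\PP)$, transporting the hypothesis that $\xi'$ is close to $\xi$ along the $\CAT(0)$ geodesics $\alpha,\alpha'$ into a statement about the coned‐off geodesic rays $\hat{c},\hat{c}'$. First I would introduce, for the given non-peripheral limit point $\xi'$, an $(\epsilon,A,r)$--nice pair $(c',\hat{c}')$ of $\alpha'=[\Phi(h_0),\xi')$ with the same initial point $h_0$, so that $f(\xi')=[\hat{c}']$. The goal is to produce a $G$--vertex $h^{**}$ on $\hat{c}'$ that is within coned‐off distance roughly $n_1$ of $h^*$, because then the subsegments $[h_0,h^*]\subset\hat{c}$ and $[h_0,h^{**}]\subset\hat{c}'$ have endpoints $\sigma$--close (after using $d(h_0,h^*)\ge n_0=\sigma+2R+n_1+1$) and Lemma~\ref{l14} forces them to lie within Hausdorff distance $R$ of one another; since every $a\in D$ satisfies $d(h_0,a)\le\sigma<n_0-2R-n_1$, no point of $D$ can lie on the initial portion of $\hat{c}'$ either, and one concludes that the connecting geodesic $\pi$ avoids $D$.

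The heart of the matter is therefore the quantitative estimate producing $h^{**}$. Here I would exploit the $\CAT(0)$ convexity recorded in Lemma~\ref{l6}: from $d_X(\alpha'(t),\alpha(t))\le 1$ and the earlier computation $d_X(\Phi(h^*),\alpha(t))\le KA+r$, the point $\Phi(h^*)$ lies within $KA+r+1$ of $\alpha'(t)$, hence within a controlled neighborhood of the ray $\alpha'$. Translating through the quasi-isometry $\Phi$ (using inequality~\eqref{eq:qi}) and through the nice-pair relations (the Hausdorff distance $r$ between $\Phi(c')$ and $\alpha'$, and the $A$--neighborhood relation between $G$--vertices of $\hat{c}'$ and $c'$), I would locate a $G$--vertex $h^{**}$ of $\hat{c}'$ with $d_S$--distance, and hence $d$--distance, to $h^*$ bounded by exactly the quantity packaged into
\begin{equation*}
n_1=K(KA+KB+2r+1)+K+1.
\end{equation*}
The role of the constant $B$ is to absorb the saturation estimate from Lemma~\ref{l7}, ensuring that the witness point on $\alpha'$ can be pulled back to a genuine $G$--vertex of $c'$ rather than stranded on a peripheral edge.

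I expect the main obstacle to be bookkeeping rather than conceptual: one must chase a single point $\Phi(h^*)$ through four successive approximations (the two geodesics $\alpha,\alpha'$, the quasigeodesics $c,c'$, and the coned‐off rays $\hat{c},\hat{c}'$), at each stage picking up an additive constant, and verify that the accumulated bound is precisely $n_1$, with $n_0$ then chosen large enough that the $R$--fattening from Lemma~\ref{l14} still leaves a buffer exceeding $\sigma$. A subtlety to watch is that $h^{**}$ must be taken \emph{far enough out} on $\hat{c}'$ (its $d$--distance to $h_0$ should exceed $\sigma+2R$) so that the two initial segments being compared genuinely both contain the region where $D$ lives; choosing $t$ large, as in the surrounding Proposition, guarantees this. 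Once $h^{**}$ is in hand, Lemma~\ref{l14} applied to the geodesic segments $[h_0,h^*]$ and $[h_0,h^{**}]$ closes the argument and yields $\pi\cap D=\emptyset$.
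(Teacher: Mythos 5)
Your first half tracks the paper closely: the paper likewise takes an $(\epsilon,A,r)$--nice pair $(c',\hat{c}')$ of $\alpha'$ based at $h_0$ and produces a vertex $g_1$ on $\hat{c}'$ with $d(g_1,h^*)\leq n_1$, using exactly the chain $\alpha'(t)\approx\alpha(t)\approx\Phi(h^*)$ together with the $B$--saturation bound from Lemma~\ref{l7}. One imprecision there: the witness on $c'$ near $\alpha'(t)$ need not be $d_S$--close to any $G$--vertex of $\hat{c}'$ --- it may sit deep inside a peripheral coset $g_1P_1\subset\Sat_0(\hat{c}')$. The paper only gets $d_S(g_1P_1,h^*)\leq K(2r+1+KA+KB)+K$ and then passes to the coned-off metric, where $\diam(g_1P_1)\leq 1$, to conclude $d(g_1,h^*)\leq n_1$. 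So your claim of a $d_S$--bound on $d_S(h^{**},h^*)$ is not available; only the $d$--bound is, though that is all that is needed downstream.

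The genuine gap is in your endgame. From the fellow-traveling of $[h_0,h^*]\subset\hat{c}$ and $[h_0,h^{**}]\subset\hat{c}'$ you assert that ``no point of $D$ can lie on the initial portion of $\hat{c}'$'' and that this yields $\pi\cap D=\emptyset$. The first claim is neither justified nor true in general (the initial portion of $\hat{c}'$ is exactly the region within $\sigma$ of $h_0$ where $D$ lives), and even if $\hat{c}'$ avoided $D$ this would say nothing about the separate bi-infinite geodesic $\pi$ from $\eta=[\hat{c}]$ to $f(\xi')=[\hat{c}']$. What is actually needed is an argument that $\pi$ itself stays farther than $\sigma$ from $h_0$. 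The paper gets this by contradiction: if $\pi\cap D\neq\emptyset$, pick $z\in\pi$ with $d(h_0,z)\leq\sigma$, split $\pi$ at $z$ into rays $\pi^+,\pi^-$ asymptotic to $\hat{c}$ and $\hat{c}'$, apply Lemma~\ref{l14} to each pair to find $z_1\in\pi^+$ and $z_1'\in\pi^-$ with $d(h^*,z_1)\leq R$ and $d(g_1,z_1')\leq R$, and use that $z$ lies between $z_1'$ and $z_1$ on $\pi$ to force
\begin{equation*}
d(g_1,h^*)\;\geq\;2n_0-2\sigma-4R-n_1\;>\;n_1,
\end{equation*}
contradicting the key estimate. (Equivalently one could lower-bound the Gromov product $(\eta\mid f(\xi'))_{h_0}$ by roughly $n_0$ and invoke hyperbolicity, but some such step must be made explicit.) Your proposal never applies Lemma~\ref{l14} to $\pi$ at all, so the conclusion $\pi\cap D=\emptyset$ does not follow from what you have established.
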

 
\begin{proof}
Let $(c',\hat{c}'$) be an $(\epsilon, A,r)$--nice pair of $\alpha'$ such that $c'$ and $\hat{c}'$ have the same initial point $h_0$. Thus, $f(\xi')=f\bigl([\alpha']\bigr)=[\hat{c}']$ and $\pi$ is a geodesic in $\hat{\Gamma}$ connecting $\hat{c}$ and $\hat{c}'$. 

Since $c'$ lies in the $B$--neighborhood of $\Sat_0(\hat{c}')$, then $\Phi(c')$ lies in the $KB$--neighborhood of $\Phi(\Sat_0(\hat{c}'))$. Also the Hausdorff distance between $\Phi(c')$ and $\alpha'$ is at most $r$. Thus, $\alpha'(t)$ lies in the $(KB+r)$--neighborhood of $\Phi(\Sat_0(\hat{c}'))$. Therefore, there is a $G$--vertex $g_1$ of $\hat{c}'$ such that $\alpha'(t)$ lies in the $(KB+r)$--neighborhood of some peripheral left space $Y_{g_1P_1}$. We claim that the distance between $g_1$ and $h^*$ in $\hat{\Gamma}(G,S,\PP)$ is bounded by $n_1$. Indeed \begin{align*} d_X\bigl(Y_{g_1P_1},\Phi(h^*)\bigr)&\leq d_X\bigl(Y_{g_1P_1},\alpha'(t)\bigr)+ d_X\bigl(\alpha'(t),\alpha(t)\bigr)+d_X\bigl(\alpha(t),\Phi(h^*)\bigr)\\&\leq KB+r+1+KA+r\end{align*} and $Y_{g_1P_1}=\Phi(g_1P_1)$.\\Thus, \[d_S(g_1P_1, h^*)\leq K(2r+1+KA+KB)+K.\] Also, the $\diam(g_1P_1)\leq 1$ in the metric $d$.\\Therefore, \[d(g_1,h^*)\leq K(2r+1+KA+KB)+K+1\leq n_1.\] 

Assume that $\pi \cap D \neq \emptyset$. We will show that $n_0\leq \sigma+2R+n_1$ and it leads to a contradiction for the choice of $n_0$. Since $\sigma =\max\set{d(h_0,a)}{a\in D}$, then we could choose $z$ in $\pi$ such that $d(h_0,z)=d(h_0,\pi)\leq \sigma$. We could consider $\pi$ as the union of two rays $\pi^+$, $\pi^-$ with the same initial point $z$ such that $[\pi^+]=\xi$ and $[\pi^-]=\xi'$. Choose $z_1$ in $\pi^+$ and $z'_1$ in $\pi^-$ such that $d(h^*, z_1)\leq R$ and $d(g_1, z'_1)\leq R$. Obviously, $z$ lies between $z_1$ and $z'_1$ (i.e., $d(z'_1,z_1)=d(z'_1,z)+d(z,z_1)$). This implies that \begin{align*} d(g_1,h^*)&\geq d(z'_1, z_1)-d(g_1,z'_1)-d(h^*, z_1)\\&\geq d(z'_1,z)+d(z,z_1)-2R\\&\geq \bigl(d(h_0, g_1)-d(h_0, z)-d(g_1, z'_1)\bigr)+\bigl(d(h_0, h^*)-d(h_0,z)-d(h^*,z_1)\bigr)-2R\\&\geq \bigl(d(h_0, g_1)-\sigma-R\bigr)+\bigl(d(h_0, h^*)-\sigma-R\bigr)-2R\\&\geq \bigl(d(h_0, h^*)-d(g_1,h^*)\bigr)+d(h_0, h^*)-2\sigma-4R\\&\geq 2n_0-2\sigma-4R-n_1\end{align*} Also $d(g_1,h^*)\leq n_1$. This implies that $n_0\leq \sigma+2R+ n_1$. This contradicts the choice of $n_0$. Thus, $\pi \cap D =\emptyset$.
\end{proof} 

\begin{lem} \label{loc1b}
If $\xi'$ is a peripheral limit point, then $\pi \cap D =\emptyset$. 
\end{lem}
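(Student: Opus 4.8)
The plan is to run the argument of Lemma~\ref{loc1a} almost verbatim, substituting the peripheral nice-pair machinery of Lemma~\ref{l15} for the non-peripheral version (Lemma~\ref{l7}), and—this is the one genuinely new ingredient—replacing the ray form of Lemma~\ref{l14} by its \emph{segment} form on the branch of $\pi$ that terminates at the parabolic point. Since $\xi'$ is a peripheral limit point, there is a unique peripheral left coset $g'P'$ with $\xi'=[\alpha']\in\partial Y_{g'P'}$, so $\alpha'\subset N_{R'}(Y_{g'P'})$ for some $R'$, and by the very definition of $f$ we have $f(\xi')=v_{g'P'}$. Hence $\pi$ is a geodesic \emph{ray} in $\hat{\Gamma}$ whose finite endpoint is the vertex $v_{g'P'}$ and whose ideal endpoint is $\eta=[\hat{c}]$.

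First I would construct the relevant nice pair. By Lemma~\ref{l4} there is an $\epsilon$-quasigeodesic ray $c'$ with initial point $h_0$ such that the Hausdorff distance between $\Phi(c')$ and $\alpha'$ is at most $r$; pulling $\alpha'\subset N_{R'}(Y_{g'P'})$ back through the quasi-isometry \eqref{eq:qi} shows $c'$ is contained in $N_{R''}(g'P')$ for some $R''$. Applying Lemma~\ref{l15} with $g^*P^*=g'P'$ and with $\hat{c}'$ the geodesic in $\hat{\Gamma}$ joining $h_0$ to $v_{g'P'}$, I obtain that $(c',\hat{c}')$ is an $(\epsilon,A,r)$-nice pair and that $c'$ lies in the $B$-neighborhood of $\Sat_0(\hat{c}')$, with the uniform $A$ and $B$ fixed in the Remark.

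Next, exactly as in Lemma~\ref{loc1a}, I would locate a $G$-vertex $g_1$ of $\hat{c}'$ close to $h^*$. Because $c'\subset N_B(\Sat_0(\hat{c}'))$, the point $\alpha'(t)$ lies in the $(KB+r)$-neighborhood of $\Phi(\Sat_0(\hat{c}'))$, hence within $KB+r$ of some $Y_{g_1P_1}$ where $g_1$ is a $G$-vertex of $\hat{c}'$. Combining this with $d_X\bigl(\alpha'(t),\alpha(t)\bigr)\leq 1$ and the already-established $d_X\bigl(\alpha(t),\Phi(h^*)\bigr)\leq KA+r$, transporting through \eqref{eq:qi}, and using $\diam(g_1P_1)\leq 1$ in the metric $d$, gives $d(g_1,h^*)\leq n_1$ word for word as before.

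Finally I would assume $\pi\cap D\neq\emptyset$, choose $z\in\pi$ with $d(h_0,z)=d(h_0,\pi)\leq\sigma$, and split $\pi=\pi^+\cup\pi^-$ at $z$, with $\pi^+$ running to $\eta$ and $\pi^-$ running to $v_{g'P'}$. Here $\pi^+$ and $\hat{c}$ are equivalent rays whose initial points lie within $\sigma$, so the ray form of Lemma~\ref{l14} yields $z_1\in\pi^+$ with $d(h^*,z_1)\leq R$; whereas $\pi^-$ and $\hat{c}'$ are geodesic \emph{segments} sharing the endpoint $v_{g'P'}$, with the remaining endpoints $z,h_0$ within $\sigma$, so the segment form of Lemma~\ref{l14} yields $z_1'\in\pi^-$ with $d(g_1,z_1')\leq R$. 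Since $z$ lies between $z_1'$ and $z_1$, the identical chain of inequalities from Lemma~\ref{loc1a} forces $n_0\leq\sigma+2R+n_1$, contradicting $n_0=\sigma+2R+n_1+1$, and therefore $\pi\cap D=\emptyset$. The one point to get right is precisely this segment-versus-ray bookkeeping on $\pi^-$: because $f$ sends the peripheral point $\xi'$ to the genuine vertex $v_{g'P'}$ rather than to an ideal point, $\pi$ ends at a vertex and must be compared to the segment $\hat{c}'$ via the segment version of Lemma~\ref{l14}—which is exactly why that version was built into the lemma's statement.
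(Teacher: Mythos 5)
Your proposal is correct and follows essentially the same route as the paper: build the geodesic $\hat{c}'$ from $h_0$ to the peripheral vertex, shadow $\alpha'$ by an $\epsilon$--quasigeodesic $c'$ lying near $\Sat_0(\hat{c}')$, and then rerun the Lemma~\ref{loc1a} distance estimates to contradict the choice of $n_0$. If anything you are slightly more explicit than the paper, which compresses the last steps into ``a similar argument as in Lemma~\ref{loc1a}''; your explicit appeal to Lemma~\ref{l15} for the uniform constant $B$ and to the segment form of Lemma~\ref{l14} on $\pi^-$ is exactly the intended bookkeeping.
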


\begin{proof}
Suppose that $\xi'$ is an element of $\partial Y_{gP}$ for some peripheral left coset $gP$. Thus, $f(\xi')=v_{gP}$. Let $\hat{c}'$ be a geodesic in $\hat{\Gamma}$ connecting $h_0$ to $v_{gP}$. Choose $c'$ to be an $\epsilon$--quasigeodesic in $\Gamma(G,S)$ with the initial point $h_0$ such that the Hausdorff distance between $\Phi(c')$ and $\alpha'$ is at most $r$. Since $\alpha'$ lies in some $R_1$--neighborhood of $Y_{gP}$, then $\Phi(c')$ lies in the $(R_1+r)$--neighborhood of $Y_{gP}$. This implies that $c'$ lies in the $\bigl(K(R_1+r)+K\bigr)$--neighborhood of $gP$ by the inequality \eqref{eq:qi}. Thus, $c'$ lies in the $B$--neighborhood of $\Sat_0(\hat{c}')$. We use a similar argument as we used in Lemma \ref{loc1a} to have $n_0\leq \sigma+2R+ n_1$. This contradicts the choice of $n_0$. Thus, $\pi \cap D =\emptyset$.
\end{proof}

We now prove the continuity of $f$ at each peripheral limit point.

\begin{prop}
The map $f$ is continuous at each peripheral limit point.
\end{prop}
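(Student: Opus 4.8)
The plan is to mirror the non-peripheral case but to replace the fellow-travelling estimate of Lemma~\ref{loc1a} by a finiteness argument coming from fineness of $\hat{\Gamma}$. Let $\xi\in\partial Y_{g_0P_0}$, so that $f(\xi)=v_{g_0P_0}$, and fix a basic neighborhood $W\supseteq M(v_{g_0P_0},D)\cap\Delta_{\infty}\hat{\Gamma}$ with $D\subset V(\hat{\Gamma})$ finite and $v_{g_0P_0}\notin D$. Put $\sigma=\max\set{d(v_{g_0P_0},a)}{a\in D}$. Since $\hat{\Gamma}$ is fine, for each $a\in D$ there are only finitely many arcs from $v_{g_0P_0}$ to $a$ of length at most $\sigma$ (Lemma~\ref{l16}); let $E\subset g_0P_0$ be the finite set of their second vertices (the $G$--vertices adjacent to $v_{g_0P_0}$ along such arcs) and set $L=\max\set{d_S(g_0,w)}{w\in E}$.

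I would base everything at the $G$--vertex $g_0\in g_0P_0$. Writing $\alpha=[\Phi(g_0),\xi)$, the hypothesis $\xi\in\partial Y_{g_0P_0}$ gives $\alpha\subset N_{R_0}(Y_{g_0P_0})$ for some $R_0$. I would then choose $t$ so large that $\tfrac{1}{K}t$, minus a bounded constant depending only on the data, exceeds $L$, and set $U=U\bigl(\Phi(g_0),\xi,t,1\bigr)$. For $\xi'\in U$ the rays $\alpha$ and $\alpha'=[\Phi(g_0),\xi')$ issue from the common point $\Phi(g_0)$ with $d_X\bigl(\alpha(t),\alpha'(t)\bigr)\le 1$, so convexity of the $\CAT(0)$ metric forces $\alpha'|_{[0,t]}\subset N_{R_0+1}(Y_{g_0P_0})$: the ray $\alpha'$ penetrates deeply into $Y_{g_0P_0}$.

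Next I would pass to the coned--off picture. Using Lemma~\ref{l7} (for non-peripheral $\xi'$) or the construction of Lemma~\ref{l15} (for peripheral $\xi'$), choose a nice pair $(c',\hat{c}')$ of $\alpha'$ with initial point $g_0$, where $\hat{c}'$ ends at $f(\xi')$ (a boundary point, a vertex $v_{g'P'}$, or $v_{g_0P_0}$ itself). Because $g_0\in g_0P_0$ and the initial stretch of $c'$ of length $\sim t$ lies in a bounded neighborhood of $g_0P_0$, a geodesic cannot traverse this stretch through $G$--vertices and must cone through $v_{g_0P_0}$; thus $\hat{c}'$ may be taken to have $v_{g_0P_0}$ as its second vertex, and the subpath $\pi$ of $\hat{c}'$ from $v_{g_0P_0}$ to $f(\xi')$ is a geodesic from $v_{g_0P_0}$ to $f(\xi')$. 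The exit vertex $w_1\in g_0P_0$ (the first $G$--vertex of $\pi$) lies within a bounded $d_S$--distance of a point $\alpha'(s)$ with $s\ge t$, whence $d_S(g_0,w_1)\ge \tfrac{1}{K}t-O(1)>L$ by \eqref{eq:qi}; in particular $w_1\notin E$.

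Finally I would show $\pi\cap D=\emptyset$, which gives $f(\xi')\in M(v_{g_0P_0},D)\cap\Delta_{\infty}\hat{\Gamma}\subseteq W$. If $\pi$ met $D$, let $a$ be the first point of $\pi$ lying in $D$; since $a\in D$ we have $d(v_{g_0P_0},a)\le\sigma$, so the initial subarc of $\pi$ from $v_{g_0P_0}$ to $a$ is an arc of length at most $\sigma$ ending in $D$, and its second vertex, which is exactly $w_1$, must then lie in $E$ --- contradicting $w_1\notin E$. (When $f(\xi')=v_{g_0P_0}$ the claim is immediate, and in the case $f(\xi')=v_{g'P'}$ the same computation forces $f(\xi')\notin D$.) The main obstacle is precisely the feature the argument of Lemma~\ref{loc1a} cannot handle: here $f(\xi)=v_{g_0P_0}$ is adjacent to the basepoint, so one cannot push $\pi$ off $D$ by distance alone. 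The resolution is the interplay of two facts --- fineness bounds the number of ways a short geodesic can run from $v_{g_0P_0}$ into the finite set $D$, while deep penetration into $Y_{g_0P_0}$ forces the actual first step $w_1$ of $\pi$ to be $d_S$--far from $g_0$ and hence to avoid those finitely many ``bad'' first steps.
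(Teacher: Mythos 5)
Your overall architecture is sound and your endgame is genuinely different from the paper's: instead of choosing the basepoint $h_0\in g_0P_0$ far from $D$ and taking $t$ larger than the overlap diameters $\diam\bigl(N_{1+R_1}(Y_{g_0P_0})\cap N_{M_2+1}(Y_{gP})\bigr)$ so that a vertex of $\hat{c}$ in $D$ directly violates bounded penetration, you use fineness to list the finitely many ``bad first steps'' $E$ out of $v_{g_0P_0}$ and then argue the actual first step $w_1$ misses $E$. That reduction is correct as far as it goes (the initial subarc of the geodesic $\pi$ up to its first point in $D$ is indeed an arc of length at most $\sigma$, so its second vertex would have to lie in $E$), and it is an attractive alternative. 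But the two claims that make it run are asserted, not proved, and they are the entire geometric content of the proposition.

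Concretely: (i) you claim the geodesic $\hat{c}'$ from $g_0$ to $f(\xi')$ ``must cone through $v_{g_0P_0}$'' because the initial stretch of $c'$ lies near $g_0P_0$. A geodesic in $\hat\Gamma$ can cross that stretch using peripheral edges of \emph{other} cosets $gP\neq g_0P_0$; ruling this out requires the covering of $c'$ by the sets $c_e$ (as in Lemma \ref{l5}) together with the bounded penetration property, which forces each such $c_e$ to have $d_S$--diameter at most $D(r)$ and hence forces $d(g_0,v)$ to be simultaneously large and bounded for some $G$--vertex $v$ of $\hat{c}'$ --- a contradiction only when $t$ is also chosen large relative to the bounded-penetration constant, which your choice of $t$ (dictated solely by $L$) does not visibly ensure. (ii) You assert that the exit vertex $w_1$ ``lies within a bounded $d_S$--distance of a point $\alpha'(s)$ with $s\ge t$.'' Nothing in the nice-pair definition gives this: Lemma \ref{l3} only places each $G$--vertex of $\hat{c}'$ within $A$ of \emph{some} point of $c'$, possibly one near the start, so a priori $w_1$ could be $d_S$--close to $g_0$ (the path $g_0\to v_{g_0P_0}\to w_1$ has length $1$ and is consistent with being an initial segment of a geodesic for \emph{any} $w_1\in g_0P_0$). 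Excluding a nearby $w_1$ again requires the saturation/covering argument plus bounded penetration: if $w_1$ were close to $g_0$, the long initial stretch of $c'$ would have to be covered by $c_e$'s of uniformly bounded $d_S$--diameter, making some $G$--vertex of $\hat{c}'$ both far from $g_0$ in $\hat\Gamma$ and $d_S$--close to $g_0P_0$, which is impossible. Both (i) and (ii) are true and provable, but the proofs are essentially the bounded-penetration computations that the paper carries out explicitly (via its choice of $h_0$, $M_1$, and $t$); since you never invoke bounded penetration and your $t$ is not chosen to dominate the relevant overlap constants, the argument as written has a genuine gap at its two load-bearing steps.
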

\begin{proof}

Let $\xi$ be an arbitrary peripheral limit point. Then $\xi$ lies in $\partial Y_{g_0P_0}$ for some peripheral left coset $g_0P_0$. Thus, $\eta=f(\xi)=v_{g_0P_0}$. Let $W$ be a neighborhood of $\eta$ in $\Delta_{\infty}\hat{\Gamma}$. Choose a finite subset $D$ of vertices of $\hat{\Gamma}$ that does not contain $v_{g_0P_0}$ such that $M_{(1,1)}(v_{g_0P_0},D)\cap\Delta_{\infty}\hat{\Gamma}\subset W$. As in Proposition \ref{loc1}, we must find a neighborhood $U=U\bigl(\Phi(h_0),\xi,t,1\bigr)$ of $\xi$ such that $f(U)\subset M_{(1,1)}(\eta,D)\cap\Delta_{\infty}\hat{\Gamma}$. In this case, we will choose $h_0$ to be a $G$--vertex in $g_0P_0$. We construct $U$ as follows:

Let $D_1$ be the set of all $G$--vertices of $D$ and $D_2$ be the set of all peripheral vertices of $D$. Let $Y=\Phi(D_1)\cup \bigcup_{v_{gP}\in D_2}Y_{gP}$ and $M_1=KA+r+1$. Since $\Phi(D_1)$ is finite, $\xi$ lies in $\partial Y_{g_0P_0}$ and $v_{g_0P_0}$ does not lie in $D_2$, then there is $h_0\in g_0P_0-\bigl(D_1\cup\bigcup_{v_{gP}\in D_2}gP\bigr)$ such that $\bigl[\Phi(h_0),\xi\bigr)\cap N_{M_1}(Y)= \emptyset$. We denote $\alpha=\bigl[\Phi(h_0),\xi\bigr)$. Since $\xi$ is a peripheral limit point in $\partial Y_{g_0P_0}$, then $\alpha$ lies in some $R_1$--neighborhood of $Y_{g_0P_0}$.

Let \begin{align*} \ell_1&=\max\set{d_X\bigl(\Phi(h_0),b\bigr)}{b\in\Phi(D_1)}\\ \ell_2&=\max\set{d_X\bigl(\Phi(h_0),Y_{gP}\bigr)}{v_{gP}\in D_2}\\\ell&=\max\{\ell_2, KA+r\}\end{align*}
$M_2=M_2(\ell)$ be the constant such that any geodesic segment whose endpoints lie in the $\ell$--neighborhood of any set $Y_{gP}$ must lie in the $M_2$--neighborhood of $Y_{gP}$. Let
$t=\max\set{\diam(N_{1+R_1}(Y_{g_0P_0})\cap N_{M_2+1}(Y_{gP}))}{v_{gP}\in D_2}+\ell_1+KA+r+1$ and $U=U\bigl(\Phi(h_0),\xi,t,1\bigr)$.

We now try to prove $f(U)\subset M(\eta,D)\cap\Delta_{\infty}\hat{\Gamma}$. For each $\xi'$ in $U$, we denote $\alpha'= [\Phi(h_0),\xi')$. Thus, $d_X\bigl(\alpha'(t),\alpha(t)\bigr)\leq 1$. In order to show $f(\xi')$ is an element in $M(\eta,D)$, we need to prove there is a $(1,1)$--quasigeodesic arc connecting $\eta=f(\xi)=v_{g_0P_0}$ and $f(\xi')$ that does not meet $D$. Our strategy is to find a geodesic $\hat{c}$ in $\hat{\Gamma}$ connecting $h_0$ and $f(\xi')$ first. If $\hat{c}$ contains $v_{g_0P_0}$, then we have a geodesic connecting $\eta=f(\xi)=v_{g_0P_0}$ and $f(\xi')$ that does not meet $D$. Otherwise, we extend $\hat{c}$ by attaching the peripheral half edge connecting $v_{g_0P_0}$ and the initial point $h_0$ of $\hat{c}$, then we have a $(1,1)$--quasigeodesic arc connecting $\eta=f(\xi)=v_{g_0P_0}$ and $f(\xi')$ that does not meet $D$. In order to see the existence of such a geodesic $\hat{c}$, we need to consider two cases: $\xi'$ is a non-peripheral limit point and $\xi'$ is a peripheral limit point. The following two lemmas complete the proof of this proposition. 

\end{proof}
\begin{lem} \label{loc2a}
If $\xi'$ is a non-peripheral limit point, then there is a geodesic $\hat{c}$ in $\hat{\Gamma}$ connecting $h_0$ and $f(\xi')$ that does not meet $D$.
\end{lem}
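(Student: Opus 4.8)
The plan is to take for $\hat{c}$ the coned-off ray coming from a nice pair of $\alpha'$ itself, and to verify directly that it avoids $D$. Since $\xi'$ is non-peripheral, Lemma \ref{l7} gives an $(\epsilon,A,r)$--nice pair $(c',\hat{c}')$ of $\alpha'$ whose two paths both start at $h_0$ and with $c'$ in the $B$--neighborhood of $\Sat_0(\hat{c}')$; by the definition of $f$ we have $f(\xi')=[\hat{c}']$, so $\hat{c}'$ is already a geodesic ray joining $h_0$ to $f(\xi')$. It therefore suffices to prove that $\hat{c}'$ meets no vertex of $D$ and then set $\hat{c}=\hat{c}'$. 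The one geometric input I would isolate first is $\CAT(0)$--convexity of the distance between the two rays $\alpha,\alpha'$ issuing from $\Phi(h_0)$: since $d_X\bigl(\alpha(t),\alpha'(t)\bigr)\leq 1$, convexity yields $d_X\bigl(\alpha(s),\alpha'(s)\bigr)\leq 1$ for every $s\leq t$. This lets me transfer the separation property $\alpha\cap N_{M_1}(Y)=\emptyset$ from $\alpha$ to the initial segment $\alpha'|_{[0,t]}$, losing only the slack $1=M_1-(KA+r)$ built into $M_1$.

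Next I would rule out a $G$--vertex of $\hat{c}'$ lying in $D_1$, arguing by contradiction. If $g\in D_1$ is such a vertex, then niceness together with \eqref{eq:qi} places $\Phi(g)$ within $KA+r$ of some $\alpha'(s)$, while $\Phi(g)\in\Phi(D_1)$ forces $d_X(\Phi(h_0),\Phi(g))\leq\ell_1$; the triangle inequality gives $s\leq\ell_1+KA+r<t$. Hence $\alpha'(s)$ is within $1$ of $\alpha(s)$, so $\alpha(s)$ lies within $KA+r+1=M_1$ of $\Phi(g)\in Y$, contradicting $\alpha\cap N_{M_1}(Y)=\emptyset$. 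This is the easy half.

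The harder half, and the step I expect to be the main obstacle, is excluding peripheral vertices $v_{gP}\in D_2$, precisely because the peripheral spaces $Y_{gP}$ are unbounded, so a $G$--vertex of $\hat{c}'$ inside $gP$ can be arbitrarily far from $\Phi(h_0)$ and the $\ell_1$ argument above breaks down. Here I would choose a $G$--vertex $u\in gP$ of $\hat{c}'$ adjacent to $v_{gP}$, and a point $w=\alpha'(s)$ within $KA+r$ of $\Phi(u)\in Y_{gP}$. Both endpoints of $\alpha'|_{[0,s]}$ then lie in the $\ell$--neighborhood of $Y_{gP}$ (using $d_X(\Phi(h_0),Y_{gP})\leq\ell_2\leq\ell$ and $KA+r\leq\ell$), so the $M_2$--property forces $\alpha'|_{[0,s]}\subset N_{M_2}(Y_{gP})$. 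Repeating the separation argument shows $s\geq t$, whence $\alpha'(t)\in N_{M_2}(Y_{gP})$ and thus $\alpha(t)\in N_{M_2+1}(Y_{gP})$; since also $\alpha(t)\in N_{R_1}(Y_{g_0P_0})$, while $\alpha(0)=\Phi(h_0)$ lies in $Y_{g_0P_0}$ and (as $\ell\leq M_2$) in $N_{M_2}(Y_{gP})$, the two points $\alpha(0),\alpha(t)$ both sit in $N_{1+R_1}(Y_{g_0P_0})\cap N_{M_2+1}(Y_{gP})$. But $d_X(\alpha(0),\alpha(t))=t$ strictly exceeds the diameter of that intersection by the very choice of $t$, a contradiction. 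Combining the two cases, $\hat{c}'$ avoids $D$, so $\hat{c}=\hat{c}'$ is the desired geodesic.
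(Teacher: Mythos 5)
Your argument is correct and is essentially the paper's own proof: both take the nice pair $(c',\hat{c}')$ of $\alpha'$ from Lemma \ref{l7}, use $\CAT(0)$ convexity to transfer $d_X\bigl(\alpha(t),\alpha'(t)\bigr)\leq 1$ to all $s\leq t$, rule out $D_1$ via the bound $\ell_1+KA+r<t$ and the separation $\alpha\cap N_{M_1}(Y)=\emptyset$, and rule out $D_2$ via the quasiconvexity constant $M_2$ and the diameter bound on $N_{1+R_1}(Y_{g_0P_0})\cap N_{M_2+1}(Y_{gP})$ built into the choice of $t$. The only differences are organizational (you split the two cases up front and make the convexity step explicit, which the paper leaves implicit), and your version in fact cleans up a couple of typos in the paper's write-up ($M_1$ versus $M_2$, and $t'-KA+r$ versus $t'-(KA+r)$).
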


\begin{proof}

Let $(c,\hat{c}$) be an $(\epsilon, A,r)$--nice pair of $\alpha'$ such that $c$ and $\hat{c}$ have the same initial point $h_0$. Thus, $f(\xi')=f\bigl([\alpha']\bigr)=[\hat{c}]$ and $\hat{c}$ is a geodesic in $\hat{\Gamma}$ connecting $h_0$ and $f(\xi')$. We now prove that $\hat{c} \cap D =\emptyset$.

Assume for contradiction that $\hat{c} \cap D\neq \emptyset$. Thus, there is $a\in \hat{c}\cap D_1$ or $a\in\hat{c}\cap gP$ for some $gP$ such that $v_{gP}\in D_2\cap \hat{c}$. Let $u\in c$ such that $d_S(a,u)\leq A$. Let $t'$ be a positive number such that $d_X\bigl(\alpha'(t'),\Phi(u)\bigr)\leq r$. This implies that \[d_X\bigl(\Phi(a),\alpha'(t')\bigr)\leq KA+r.\] and $\alpha'(t')$ therefore lies in the $(KA+r)$--neighborhood of $Y$. We claim that $t'\geq t$. Indeed, if $t'\leq t$ then $\alpha(t')$ lies in the (KA+r+1)--neighborhood of $Y$ and this implies that $\alpha\cap N_{M_1}(Y)\neq \emptyset$. This is a contradiction for the choice of $\alpha$. Thus, $t'\geq t$.

We now show that there is a peripheral space $Y_{gP}$ such that $v_{gP}\in D_2$ and $\diam\bigl(N_{1+R_1}(Y_{g_0P_0})\cap N_{M_2+1}(Y_{gP})\bigr)$ is greater than or equal $t$. This fact would contradict our choice of $t$ and we finish the proof of the lemma. Since the distance between $\alpha'(t')$, $\Phi(h_0)$ is $t'$ and the distance between $\alpha'(t')$, $\Phi(a)$ is at most $KA+r$, then the distance between $\Phi(h_0)$, $\Phi(a)$ is at least $t'-KA+r$. Also $t'-KA+r\geq t-KA+r>\ell_1$. Thus, distance between $\Phi(h_0)$, $\Phi(a)$ is greater than $\ell_1$. This implies that $a\in\hat{c}\cap gP$ for some $gP$ such that $v_{gP}\in D_2$. Since $\alpha'(t')$, $\alpha'(0')$ both lie in the $\ell$-neighborhood of $Y_gP$, then $\alpha'\bigl([0,t']\bigr)$ lies in the $M_1$--neighborhood of $Y_gP$. In particular, $\alpha'\bigl([0,t]\bigr)$ lies in the $M_1$--neighborhood of $Y_gP$. Obviously, $\alpha'([0,t])$ lies in the $(1+R_1)$--neighborhood of $Y_{g_0P_0}$. Thus, $\diam\bigl(N_{1+R_1}(Y_{g_0P_0})\cap N_{M_2+1}(Y_{gP})\bigr)$ is greater than or equal $t$. This fact contradicts our choice of $t$. Therefore, $\hat{c} \cap D=\emptyset$.
\end{proof}

\begin{lem} \label{loc2b}
If $\xi'$ is a non-peripheral limit point, then there is a geodesic $\hat{c}$ in $\hat{\Gamma}$ connecting $h_0$ and $f(\xi')$ that does not meet $D$.
\end{lem}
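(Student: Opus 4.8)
The plan is to reuse the nice-pair machinery together with the $\CAT(0)$ convexity of distance functions, and to close the argument using the bounded-penetration choice of $t$. First I would apply Lemma~\ref{l7} to the geodesic ray $\alpha' = [\Phi(h_0), \xi')$ to obtain an $(\epsilon, A, r)$--nice pair $(c, \hat{c})$ with $c$ and $\hat{c}$ both emanating from $h_0$. Because $\xi'$ is a non-peripheral limit point, the definition of $f$ gives $f(\xi') = [\hat{c}]$, so $\hat{c}$ is already a geodesic ray from $h_0$ to $f(\xi')$; the whole lemma therefore reduces to showing that $\hat{c}$ avoids $D$.

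I would establish $\hat{c} \cap D = \emptyset$ by contradiction. Supposing some vertex $a$ of $\hat{c}$ lies in $D$, the nice-pair condition furnishes $u \in c$ with $d_S(a, u) \le A$, and a parameter $t'$ with $d_X(\alpha'(t'), \Phi(u)) \le r$; the quasi-isometry inequality \eqref{eq:qi} then yields $d_X(\Phi(a), \alpha'(t')) \le KA + r$, so $\alpha'(t')$ enters the $(KA + r)$--neighborhood of $Y$. The first decisive step is to prove $t' \ge t$. Since $\alpha$ and $\alpha'$ share the basepoint $\Phi(h_0)$, $\CAT(0)$ convexity propagates the single bound $d_X(\alpha(t), \alpha'(t)) \le 1$ backward to $d_X(\alpha(s), \alpha'(s)) \le 1$ for all $s \le t$; were $t' \le t$, this would place $\alpha(t')$ in $N_{M_1}(Y)$, contradicting the defining property $\alpha \cap N_{M_1}(Y) = \emptyset$ of the chosen $h_0$.

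With $t' \ge t$ in hand, I would extract the contradiction forced by the choice of $t$. The estimate $d_X(\Phi(h_0), \Phi(a)) \ge t' - (KA + r) \ge t - (KA + r) > \ell_1$ rules out $a \in D_1$, so $a \in gP$ for some $v_{gP} \in D_2$ and hence $\Phi(a) \in Y_{gP}$. Then both endpoints $\Phi(h_0) = \alpha'(0)$ and $\alpha'(t')$ lie in the $\ell$--neighborhood of $Y_{gP}$, by the definitions of $\ell_2$ and of $\ell$, so the defining property of $M_2$ forces the geodesic segment $\alpha'([0, t'])$, and a fortiori $\alpha'([0, t])$, into $N_{M_2}(Y_{gP})$. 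Simultaneously $\alpha'([0, t]) \subset N_{1 + R_1}(Y_{g_0P_0})$, again by $\CAT(0)$ convexity together with $\alpha \subset N_{R_1}(Y_{g_0P_0})$. Thus a geodesic segment of diameter $t$ sits inside $N_{1+R_1}(Y_{g_0P_0}) \cap N_{M_2+1}(Y_{gP})$, giving $\diam\bigl(N_{1+R_1}(Y_{g_0P_0}) \cap N_{M_2+1}(Y_{gP})\bigr) \ge t$, in direct contradiction with the choice of $t$. Therefore $\hat{c} \cap D = \emptyset$, as required.

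The main obstacle is making the step $t' \ge t$ fully rigorous: everything downstream depends on transporting the single boundary estimate $d_X(\alpha(t), \alpha'(t)) \le 1$ to all smaller parameters, and it is precisely the convexity of $s \mapsto d_X(\alpha(s), \alpha'(s))$ for geodesics issuing from a common point that supplies this. A secondary point requiring care is the bookkeeping of the two neighborhood radii $M_1 = KA + r + 1$ and $M_2 = M_2(\ell)$, so that the final diameter inequality matches exactly the slack built into the definition of $t$.
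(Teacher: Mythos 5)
Your argument is internally sound, but measured against the paper's own proof of Lemma \ref{loc2b} it proves the wrong case. The hypothesis of Lemma \ref{loc2b} as printed is an evident copy-paste typo: it verbatim duplicates the hypothesis of Lemma \ref{loc2a}, whereas the enclosing proposition explicitly assigns the two lemmas to the two cases (``$\xi'$ is a non-peripheral limit point and $\xi'$ is a peripheral limit point''), and the paper's proof of Lemma \ref{loc2b} begins ``Suppose that $\xi'$ is an element in $\partial Y_{g'_0P'_0}$,'' so that $f(\xi')=v_{g'_0P'_0}$. What you have written is, step for step, the paper's proof of Lemma \ref{loc2a} --- done correctly, and in two places more carefully than the paper itself: you make explicit the $\CAT(0)$ convexity of $s\mapsto d_X\bigl(\alpha(s),\alpha'(s)\bigr)$ underlying the step $t'\geq t$, and you correctly write $N_{M_2}(Y_{gP})$ where the paper's text misprints $M_1$. (One small slip on your side: at the start you take an arbitrary vertex $a\in\hat{c}\cap D$ and immediately invoke $d_S(a,u)\leq A$, which is meaningless if $a$ is a peripheral vertex $v_{gP}\in D_2$; as in the paper, one must in that subcase replace $a$ by a $G$--vertex of $\hat{c}\cap gP$ adjacent to $v_{gP}$ before applying the nice-pair condition.)

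The genuine gap is that the peripheral case --- the actual content of Lemma \ref{loc2b}, without which the proposition's dichotomy is incomplete --- is never addressed, and your construction does not transplant to it. If $\xi'\in\partial Y_{g'_0P'_0}$, then $f(\xi')=v_{g'_0P'_0}$ is a cone vertex, so the required $\hat{c}$ is a finite geodesic segment from $h_0$ to $v_{g'_0P'_0}$ rather than a ray, and Lemma \ref{l7}, your source of the nice pair, is unavailable because it requires $[\alpha']\notin\partial Y_{gP}$ for every $gP$. The paper instead takes $c$ an $\epsilon$--quasigeodesic ray from $h_0$ with Hausdorff distance at most $r$ from $\alpha'$ (Lemma \ref{l4}), observes that $\alpha'\subset N_{R'}(Y_{g'_0P'_0})$ forces $c\subset N_{K(R'+r)+K}(g'_0P'_0)$ via the inequality \eqref{eq:qi}, and then invokes Lemma \ref{l15} to conclude that each $G$--vertex of the geodesic $\hat{c}$ from $h_0$ to $v_{g'_0P'_0}$ lies in the $A$--neighborhood of $c$; from there the avoidance argument you gave (the distance estimate ruling out $D_1$, then the bound on $\diam\bigl(N_{1+R_1}(Y_{g_0P_0})\cap N_{M_2+1}(Y_{gP})\bigr)$ contradicting the choice of $t$) runs as in Lemma \ref{loc2a}. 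So your proposal duplicates a result the paper already has and leaves unproven the half of the dichotomy this lemma is responsible for.
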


\begin{proof}
Suppose that $\xi'$ is an element in $\partial Y_{g'_0P'_0}$ for some peripheral left coset $g'_0P'_0$. Thus, $f(\xi')=v_{g'_0P'_0}$. Let $\hat{c}$ be a geodesic connecting $h_0$ and $v_{g'_0P'_0}$. We now prove that $\hat{c} \cap D =\emptyset$.

Let $c$ be an $\epsilon$--quasigeodesic ray with the initial point $h_0$ in $\Gamma(G,S)$ such that the Hausdorff distance between $\Phi(c)$ and $\alpha'$ is at most $r$. Also $\alpha'$ lies in some $R'$--neighborhood of $Y_{g'_0P'_0}$. Thus, $\Phi(c)$ lies in the $(R'+r)$--neighborhood of $Y_{g'_0P'_0}$. This implies that $c$ lies in the $\bigl(K(R'+r)+K\bigr)$--neighborhood of $g'_0P'_0$. Therefore, each $G$--vertex of $\hat{c}$ lies in the $A$--neighborhood of $c$. We argue as in Lemma \ref{loc2a} to conclude $\hat{c} \cap D =\emptyset$.
\end{proof}

\begin{lem}
The map $f$ is a quotient map.
\end{lem}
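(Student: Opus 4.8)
The plan is to assemble the pieces already proved into the standard topological fact that a continuous, surjective, closed map from a compact space to a Hausdorff space is a quotient map. First I would collect what the preceding lemmas give us: the map $f\colon \partial X\to \Delta_{\infty}\hat{\Gamma}$ is well-defined, it is surjective (it maps $\partial X-\bigcup_{gP\in\Pi}\partial Y_{gP}$ onto $\partial\hat{\Gamma}$ and $\bigcup_{gP\in\Pi}\partial Y_{gP}$ onto $V_{\infty}(\hat{\Gamma})$, and $\Delta_{\infty}\hat{\Gamma}=\partial\hat{\Gamma}\cup V_{\infty}(\hat{\Gamma})$), and it is continuous (by the two preceding propositions, $f$ is continuous at every non-peripheral and every peripheral limit point, hence continuous on all of $\partial X$).

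Next I would invoke compactness and the Hausdorff property. Since $G$ acts properly and cocompactly on the $\CAT(0)$ space $X$, the boundary $\partial X$ is compact. The target $\Delta_{\infty}\hat{\Gamma}$ is compact and Hausdorff by the cited Bowditch lemma (items (2) and (4) in the statement of that lemma). A continuous map from a compact space to a Hausdorff space is automatically closed: the image of any closed set is compact, hence closed in the Hausdorff target. Therefore $f$ is a continuous, surjective, closed map.

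Finally I would apply the elementary topological criterion: any continuous surjective closed (equivalently, open) map is a quotient map. Concretely, a subset $V\subset \Delta_{\infty}\hat{\Gamma}$ is open if and only if $f^{-1}(V)$ is open in $\partial X$; the forward direction is continuity, and for the reverse direction one uses that $f$ is closed together with surjectivity, by passing to complements: if $f^{-1}(V)$ is open then $f^{-1}(\Delta_{\infty}\hat{\Gamma}-V)$ is closed, so its image $\Delta_{\infty}\hat{\Gamma}-V$ is closed by closedness of $f$ (here surjectivity guarantees $f\bigl(f^{-1}(\Delta_{\infty}\hat{\Gamma}-V)\bigr)=\Delta_{\infty}\hat{\Gamma}-V$), whence $V$ is open. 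This exhibits $f$ as a quotient map and completes the proof.

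I expect no real obstacle here: the substantive work — building $f$, proving surjectivity, injectivity on the non-peripheral part, $G$-equivariance, and especially continuity — has already been carried out in the earlier lemmas and propositions. The only point requiring a little care is making sure the compactness of $\partial X$ is legitimately available; this follows because a proper cocompact action on a proper $\CAT(0)$ space $X$ (properness of $X$ itself being a consequence of the proper cocompact action) forces $\partial X$ to be compact, so the compact-to-Hausdorff closed-map argument applies cleanly.
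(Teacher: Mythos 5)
Your argument is correct and is exactly the one the paper uses: the paper's proof simply cites continuity of $f$, compactness of $\partial X$, and the Hausdorff property of $\Delta_{\infty}(\hat{\Gamma})$, which is precisely the compact-to-Hausdorff closed-map criterion you spell out. You have just made the standard topological reasoning explicit.
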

\begin{proof}
This is obvious since $f$ is continuous, $\partial X$ is a compact space and $\Delta_{\infty}(\hat{\Gamma})$ is Hausdorff.
\end{proof} 

From all the above lemmas in this section, the following lemma is obvious and completes the proof of the main theorem.
\begin{lem}
The map $f$ induces a $G$--equivariant homeomorphism from the space obtained from $\partial X$ by identifying the peripheral limit points of the same type to $\Delta_{\infty}(\hat{\Gamma})$. 
\end{lem}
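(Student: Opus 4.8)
The plan is to read off the fibers of $f$ explicitly and then invoke the universal property of quotient maps. First I would note that the images of the two kinds of limit points are disjoint: a non-peripheral limit point is sent into $\partial\hat\Gamma$, while a peripheral limit point is sent to a vertex $v_{gP}\in V_\infty(\hat\Gamma)$, and $\partial\hat\Gamma\cap V_\infty(\hat\Gamma)=\emptyset$. Hence no non-peripheral limit point can lie in the same fiber of $f$ as a peripheral one, and the analysis of fibers splits cleanly into the two regimes.

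Next I would pin down the fibers exactly. On the non-peripheral part the map $f$ is injective by Lemma \ref{l11}, so each fiber there is a single point. On the peripheral part I claim that $f^{-1}(v_{gP})=\partial Y_{gP}$ for every infinite peripheral left coset $gP$: by construction $f$ sends each point of $\partial Y_{gP}$ to $v_{gP}$, and conversely if $f(\xi)=v_{gP}$ then $\xi$ must be peripheral and lies in the unique $\partial Y_{g_0P_0}$ used to define $f(\xi)$, which forces $g_0P_0=gP$. Since the sets $\partial Y_{gP}$ are pairwise disjoint by Lemma \ref{l31}, these fibers are precisely the non-singleton equivalence classes of the ``same type'' relation, while the non-peripheral fibers account for the singleton classes. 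Thus the equivalence relation $\xi\sim\xi'\iff f(\xi)=f(\xi')$ is exactly the relation identifying peripheral limit points of the same type.

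With the fibers identified, I would finish using the previous lemma, which states that $f$ is a (surjective) quotient map onto the Hausdorff space $\Delta_\infty\hat\Gamma$. Giving $\partial X$ modulo the ``same type'' relation the quotient topology, the projection and $f$ have identical fibers, so the standard universal property of quotient maps yields a continuous bijection $\bar f$ from this quotient onto $\Delta_\infty\hat\Gamma$, and the fact that $f$ is itself a quotient map upgrades $\bar f$ to a homeomorphism. Finally, $G$--equivariance of $\bar f$ descends from the $G$--equivariance of $f$ together with the $G$--invariance of the ``same type'' relation established in Lemma \ref{l32}, which guarantees that the $G$--action passes to the quotient.

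I expect no serious obstacle here, since all of the substantive content---continuity of $f$, the quotient property, injectivity off the peripheral set, and the disjointness of the $\partial Y_{gP}$---is already in hand. The only points requiring care are the bookkeeping that identifies $f^{-1}(v_{gP})$ with $\partial Y_{gP}$ and the clean invocation of the quotient-map universal property to pass from the continuous bijection $\bar f$ to a homeomorphism.
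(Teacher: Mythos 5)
Your proposal is correct and follows the same route the paper intends: the paper simply declares the lemma ``obvious'' from the preceding results (injectivity off the peripheral set, the fiber description over each $v_{gP}$, continuity, the quotient-map property, and $G$--equivariance), and your write-up fills in exactly that bookkeeping via the universal property of quotient maps.
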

\bibliographystyle{alpha}
\bibliography{Tran}
\end{document}